\providecommand{\keywords}[1]
{
	\small	
	\textbf{Keywords --} #1
}
\newcommand{\norm}[1]{\left\lVert#1\right\rVert}
\newtheorem{assumption}{Assumption}[section]
\newtheorem{theorem}{Theorem}[section]
\newtheorem{lemma}[theorem]{Lemma}
\newtheorem{remark}{Remark}[section]
\newtheorem{proposition}[theorem]{Proposition}
\newtheorem{definition}{Definition}[section]
\begin{document}
	\title{\vspace{-4ex}\bf Efficacy of the Sterile Insect Technique in the presence of inaccessible areas: A study using two-patch models}
	\author[1]{P.A. Bliman} 
	\author[2, 1]{N. Nguyen} 
	\author[2]{N. Vauchelet}
	\affil[1]{\footnotesize MAMBA, Inria Paris; LJLL, Sorbonne University, CNRS, 5 Place Jussieu, 75005 Paris, France} 
	\affil[2]{\footnotesize LAGA, CNRS UMR 7539, Institut Galilée, University Sorbonne Paris Nord, 99 avenue Jean-Baptiste Clément, 93430 Villetaneuse, France} 
	\maketitle
	
	\begin{abstract}
		The Sterile Insect Technique (SIT) is one of the sustainable strategies for the control of disease vectors, which consists of releasing sterilized males that will mate with the wild females, resulting in a reduction and, eventually a local elimination, of the wild population. The implementation of the SIT in the field can become problematic when there are inaccessible areas where the release of sterile insects cannot be carried out directly, and the migration of wild insects from these areas to the treated zone may influence the efficacy of this technique. However, we can also take advantage of the movement of sterile individuals to control the wild population in these unreachable places. In this paper, we derive a two-patch model for \textit{Aedes} mosquitoes where we consider the discrete diffusion between the treated area and the inaccessible zone. We investigate two different release strategies (constant and impulsive periodic releases), and by using the monotonicity of the model, we show that if the number of released sterile males exceeds some threshold, the technique succeeds in driving the whole population in both areas to extinction. This threshold depends on not only the biological parameters of the population but also the diffusion between the two patches. 
	\end{abstract}
	
	\keywords{sterile insect technique, metapopulation model, monotone dynamical systems}
	\section{Introduction}
	Mosquitoes of genus \textit{Aedes aegypti} and \textit{Aedes albopictus} play a crucial role in transmitting various arboviruses to humans including dengue, chikungunya, and Zika virus. Existing treatments are only symptomatic, and available vaccines (e.g. Dengvaxia for dengue) have a lot of constraints \cite{CDC_DEN}. Consequently, the primary prevention lies in controlling the mosquito population \cite{WHO17}. However, traditional insecticide-based methods have limitations, prompting the need for innovative and sustainable strategies \cite{ACH19}, \cite{BEC20}.  Biological controls involve releasing large numbers of mosquitoes that are either sterile or incapable of transmitting diseases, which recently gained much attention. The Sterile Insect Technique is among these sustainable alternative methods which consist of the release of sterilized male mosquitoes that will mate with wild females \cite{KNI59}, \cite{DYC21}. These wild females, unable to lay viable eggs, will gradually drive the wild population to decline. The efficacy of SIT relies on a comprehensive understanding of the vector behavior, as well as accurate modeling of its dispersal, to optimize the release strategies. 
	
	Spatial heterogeneity in mosquito populations and mosquito-borne diseases occurs due to differences in the quality and quantity of their habitats, as well as variations in host density, temperature, and rainfall \cite{CHA11}, \cite{STO13}, \cite{MAR00}. Especially, the number and accessibility of sites where mosquitoes lay their eggs play a significant role in determining the size of adult mosquito populations by increasing the carrying capacity of the environment \cite{ABR15}. Developing models that capture mosquito behavior in response to environmental heterogeneity is crucial for designing effective control strategies, especially in the face of rapid global land-use changes.  Models using monotone dynamical systems were introduced (see e.g. \cite{ANG12}, \cite{DUM12}, \cite{STR19}) and applied efficiently  (see e.g. \cite{BLI19}, \cite{ANG20}, \cite{BLI22}) to study the SIT.  Not many mosquito modeling studies have incorporated migration or dispersal effects due to insufficient information on individual movement in the field as well as the complex analysis of models. Most of them used the diffusion approach, which considers space as a continuous variable. They were first developed in one-dimensional space using scalar reaction-diffusion equations \cite{MAN86}, \cite{LEW93}, then extended to sex-structured compartmental systems to consider the different behaviors of aquatic phases, wild females, males, and sterile males (see e.g.  \cite{ALM22}, \cite{ALM23}, \cite{NGA2})  and in higher dimension (see e.g. \cite{DUF}, \cite{NGA3}).  However, it remains challenging to explicitly incorporate the factors that affect the movement of sterile males. For instance, when resources are concentrated in patches or distinct locations, a metapopulation approach in which we treat space as a discrete set of patches and describe how the population on each patch varies with time is more suitable for modeling mosquito dispersal \cite{AUG08}, \cite{LUT13}, \cite{MAN17}.
	
	The application of the SIT in the field encounters a difficulty of the limitation in space when there are some inaccessible areas where people can not release sterile insects directly. For example, mosquitoes of the genus \textit{Aedes polynesiensis} primarily exploit land crab burrows for oviposition in certain French Polynesian atolls \cite{BON58}, \cite{LAR92}, \cite{LAR02}, \cite{HEA22}. The larvae in the crab burrows emerge into adult mosquitoes that can fly out to search for food and human blood for fertility. Another example is the inaccessibility in heavily forested or narrow mountainous terrain, where sterile insects must be released via helicopters owing to their better maneuverability \cite{VAR95}. However, one standout advantage of the SIT is that it relies on the natural ability of the male mosquitoes to move, locate, and mate with females. This behavior will take place in those areas that cannot be reached with conventional control techniques (i.e. insecticides). Therefore, we are interested in the mosquito population dynamics in the presence of such reservoirs and the elimination of the whole population while considering that the released sterile males can fly into unreachable sites. The patchy models with discrete diffusion mentioned above are a useful approach to describe the mosquito dynamic taking into account the inaccessibility to the burrows. We develop a two-patch model and in each patch, we consider a monotone dynamical system inspired by the models in \cite{STR19} where the population is divided into different compartments characterizing the aquatic phase, wild females, wild males, and sterile males. Except for the aquatic phase, individuals in other states move between patches at specific rates. The SIT is only carried out in the first patch and only affects the second one through these natural movements. Two-patch models were used to study the same problem in \cite{YAN20}, where they considered a simple scalar equation to describe the population dynamics in each patch. Our model provides a better understanding of how the dynamics of each stage influence the result of the control method. However, the complexity of our system does not allow us to obtain the full analysis of the model like what has been done in \cite{YAN20}.
	
	In the present work, we are interested in how to guarantee the successful elimination of the SIT in both areas and how the diffusion rates as well as other biological parameters influence the efficacy. To tackle this problem, we focus on studying the global stability of the extinction equilibrium in our system. Results of global asymptotic behavior for the single-species model depending on the discrete diffusion were provided in the literature\cite{ALL87}, \cite{TAK89}, \cite{LU93}. Lyapunov's second method was used in \cite{LI10} to investigate the multi-species system with discrete diffusion. Many works have been done to design robust strategies for releasing sterile males to drive a population to elimination \cite{BLI19}, \cite{BLI22}. We extend these control strategies to our two-patch system and prove the sufficient conditions for both constant continuous and periodic impulsive releases to drive the whole system to extinction. We obtain that when the number of released sterile males exceeds some threshold, the populations in both the treated and the inaccessible zone reach elimination, and we show how this critical value depends on the diffusion rates between two areas and other biological intrinsic parameters. In the original mathematical model provided by Knipling \cite{KNI59}, the population elimination depends on an overflooding ratio of the number of sterile males released to the initial wild male population size. In our result, the threshold number of sterile males does not depend on the initial level of the wild population due to the global stability of the zero steady state. Our results may help estimate the possibility and the amount of sterile mosquitoes necessary to complete elimination in the presence of hidden, inaccessible reservoirs.
	
	The organization of the paper is as follows. In section \ref{sec:model}, we present the formulation of the two-patch model and prove the monotonicity of the systems and some other preliminary results that will be applied in our proofs. Section \ref{sec:natural} is devoted to the study of the system without sterile insects. In Theorem \ref{thm:equi1}, we provide conditions for the persistence and extinction of the wild population on each patch. In section \ref{sec:elimination}, we study the dynamics of mosquito population in the presence of the SIT with two release strategies: constant and impulsive releases. Theorem \ref{thm:main} presents sufficient conditions on the average number of sterile males released per time unit to drive the population to elimination. In section \ref{subsec:principle}, we present the principle of our method and then apply it to prove Theorem \ref{thm:main}. Section \ref{sec:dependence} is focused on the dependence of the critical number of sterile males on parameters. The results in \ref{subsec:largediffusion} show that when the diffusion rates are large, the dynamics of the whole system are the same as in the case when there is no separation between the two sub-populations. Then, Theorem \ref{thm:bio} shows that the critical number of released sterile males depends monotonically on the biological parameters. Finally, some numerical illustrations are provided in Section \ref{sec:numeric}. 
	\section{Model \label{sec:model}}
	In this section, we present the formulation of the model used to study the population dynamics in \ref{subsec:model2patch}. Then, in \ref{subsec:monotonicity}, we provide some preliminary results that will be used later in the present work.
	\subsection{Formulation of the model}
	\label{subsec:model2patch}
	Consider two patches and denote $E_i, F_i, M_i$, and $M_i^s$ respectively the density of aquatic phase (eggs, larvae, pupae), fertilized females, wild males, and sterile males on the patch $i$ depending on time $t$. We consider a two-patch model coupled by the diffusion terms as follows where the dynamic in each patch is inspired by the model in \cite{STR19, ANG20}
	\begin{subequations}
		\label{eqn:main2patch}
		\begin{align}
			\dot{E_1} &= bF_1\left( 1 - \frac{E_1}{K_1}\right) - (\nu_E + \mu_E) E_1, \label{eqn:E1} \\
			\dot{F_1} &= r\nu_E E_1 \frac{M_1}{M_1 + \gamma M_1^s} - \mu_F F_1 - d_{12} F_1 + d_{21} F_2, \label{eqn:F1} \\
			\dot{M_1} & = (1-r)\nu_E E_1 - \mu_M M_1 - \beta d_{12} M_1 + \beta d_{21} M_2, \label{eqn:M1} \\
			\dot{M_1^s} &= \Lambda - \mu_s M_1^s - \alpha d_{12} M_1^s + \alpha d_{21} M_2^s, \label{eqn:Ms1} \\
			\dot{E_2} &= bF_2\left( 1 - \frac{E_2}{K_2}\right) - (\nu_E + \mu_E) E_2, \label{eqn:E2} \\
			\dot{F_2} &= r\nu_E E_2 \frac{M_2}{M_2 + \gamma M_2^s} - \mu_F F_2 - d_{21} F_2 + d_{12} F_1, \label{eqn:F2}  \\
			\dot{M_2} & = (1-r)\nu_E E_2 - \mu_M M_2 - \beta d_{21} M_2 + \beta d_{12} M_1, \label{eqn:M2} \\
			\dot{M_2^s} &= - \mu_s M_2^s - \alpha d_{21} M_2^s + \alpha d_{12} M_1^s. \label{eqn:Ms2} 
		\end{align}
	\end{subequations}
	
	\noindent In \textit{Aedes} mosquitoes, fertilization takes place very quickly after the females have hatched (the males wait for the females to hatch near the breeding sites). So an important assumption we made in system \eqref{eqn:main2patch} is that all female mosquitoes can mate with either wild or sterile males. The variable $F_i$ only characterizes the density of females fertilized by wild males and $E_i$ denotes the density of viable offspring in the aquatic phase. On patch $i$, the total amount of offspring emerging per time unit is $\nu_E E_i$. These correspond to the birth of males and females, with the respective quantities $(1-r) \nu_E E_i$ and $r \nu_E E_i$. Among the $r\nu_E E_i$ females that hatch at each unit of time, a quantity $r\nu_E E_i \dfrac{M_i}{(M_i+\gamma M_i^s)}$ mate with a wild male and produce viable offspring (which corresponds to the first term of equations \eqref{eqn:E1} and \eqref{eqn:E2}); and the females that couple with a sterile male are no longer involved in the reproduction.
	
	The interpretation of the parameters used in the model, with $i, j \in \{1, 2\}$, is as below
	\begin{itemize}[leftmargin=.5cm]
		\item $\Lambda(t)$ is the number per time unit of sterile mosquitoes that are released at time $t$ on the first patch;
		\item $\gamma \in [0,1]$ characterizes the competitiveness of sterile males;
		\item $b>0$ is the birth rate; $\mu_E>0$, $\mu_M>0$, and $\mu_F>0$ denote the death rates for the mosquitoes in the aquatic phase, for adult males, and for adult females, respectively;
		\item $K_i$ is an environmental capacity for the aquatic phase on patch $i$, accounting also for the intraspecific competition;
		\item $\nu_E>0$ is the rate of emergence;
		\item $r\in (0,1)$ is the probability that a female emerges, then $(1-r)$ is the probability that a male emerges.
		\item  $d_{ij} > 0$ is the moving rate of female mosquitoes from patch $i$ to patch $j$; the fertile males and sterile males move slower but with proportional rates respectively $\beta d_{ij}$, $\alpha d_{ij}$  where typically $0 < \alpha < \beta < 1$ in practice.  
	\end{itemize}
	Mosquito life cycle characteristics in both zones are linked to the zone sizes, comparable to the distance a mosquito can travel. In this way, the overall homogeneity of the species can be preserved, despite the possible effects of evolution, due to permanent mixing, so it is relevant to consider the same biological parameters $b, \ r,\ \mu_E,\ \mu_F,\ \mu_M$ in the two patches. 
	
	We recall the basic offspring number of  the sub-population in one patch as introduced in \cite{STR19}
	\begin{equation}
		\mathcal{N} = \dfrac{br\nu_E}{\mu_F(\mu_E + \nu_E)}. 
		\label{eqn:bon}
	\end{equation}
	
	The persistence and extinction of the population in the patch depend strongly on the value of this number. In Section, \ref{sec:natural}, we will show that $\mathcal{N}$ is also the basic offspring number of the whole two-patch system.
	\subsection{Preliminary results}
	\label{subsec:monotonicity}
	First, we provide some definitions and denotations of the order used in the present work. 
	\begin{definition}
		A matrix $A \in \mathcal{M}^{m \times n}$ is called non-negative, denoted $A \geq 0$, if all of its entries are non-negative. 
		
		\noindent It is called positive, denoted $A > 0$, if it is non-negative and there is at least one positive entry. 
		
		\noindent It is called strictly positive, denoted $A \gg 0$, if all of its entries are strictly positive. 
	\end{definition}
	In the present work, we also use the above definition of order for vectors in $\mathbb{R}^n$. Next, we recall the definition of a Metzler matrix and present a property of a Metzler matrix that will be used in this paper.
    \begin{definition}
        A matrix $A = (a_{ij})$ is called Metzler of all the off-diagonal components are nonnegative, i.e. $a_{ij} \geq 0 $ for $i \neq j$. 

        A matrix $A \in \mathcal{M}^{n \times n}$ is irreducible if it is not similar via a permutation to a block upper triangular matrix.
    \end{definition}
	\begin{lemma}
		\label{lem:metzler}
		Assume that a square matrix $A$ is Metzler and irreducible, then $e^A$ is strictly positive.
	\end{lemma}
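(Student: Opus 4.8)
The plan is to reduce to the classical fact that a nonnegative irreducible matrix with a positive diagonal is primitive. First I would shift the spectrum: since $A$ is Metzler, pick a scalar $c>0$ large enough that $c+a_{ii}>0$ for every $i$ (for instance $c=1+\max_i|a_{ii}|$), and set $B:=A+cI$. Then $B$ is a genuinely nonnegative matrix — its off-diagonal entries are exactly those of $A$, hence nonnegative, and its diagonal entries are now strictly positive — and since $B$ and $A$ differ only on the diagonal, $B$ has the same off-diagonal zero pattern as $A$; in particular $B$ is still irreducible. Because $cI$ commutes with $A$, we have $e^{A}=e^{-c}\,e^{B}$, so it suffices to prove $e^{B}\gg 0$.

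Next I would expand $e^{B}=\sum_{k\geq 0}B^{k}/k!$. As $B\geq 0$, every term of this series is a nonnegative matrix, so entrywise $e^{B}\geq B^{k}/k!$ for each fixed $k$; hence it is enough to exhibit a single power $B^{k}$ that is strictly positive. The key observation is the combinatorial reading of the entries: $(B^{k})_{ij}>0$ precisely when there is a walk of length $k$ from $i$ to $j$ in the directed graph whose arcs are the pairs $(p,q)$ with $B_{pq}>0$. Irreducibility of $B$ is equivalent to strong connectivity of this graph, so any two vertices are joined by a directed walk of length at most $n-1$; and since every diagonal entry $B_{ii}$ is positive, each vertex carries a self-loop, which lets us lengthen any such walk to have length exactly $n-1$. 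Therefore $B^{n-1}\gg 0$, whence $e^{B}\geq B^{n-1}/(n-1)!\gg 0$ and $e^{A}=e^{-c}e^{B}\gg 0$.

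The only point that requires a little care is the equivalence between the ``not permutation-similar to a block upper triangular matrix'' definition of irreducibility and strong connectivity of the associated digraph; this is classical Perron--Frobenius theory, and I would either cite it or include the short argument that a digraph which is not strongly connected yields a partition of the index set realizing such a block triangular form after relabelling. Everything else is the standard power-series manipulation together with the self-loop padding argument, so I do not anticipate any further obstacle.
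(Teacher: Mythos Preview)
Your proof is correct and follows essentially the same route as the paper: shift $A$ by a large multiple of the identity to obtain a nonnegative irreducible matrix with positive diagonal, invoke primitivity to get a strictly positive power (hence a strictly positive exponential), and then undo the shift using $e^{A}=e^{-c}e^{A+cI}$. The only difference is that the paper simply asserts primitivity of $A+\delta I$, whereas you spell out the graph-theoretic self-loop padding argument that establishes it; this is a welcome elaboration rather than a different method.
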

	\begin{proof}
		Since $A$ is Metzler, then there exists a constant $\delta  >0 $ large enough such that $A + \delta I$ is a non-negative matrix with a positive element on the main diagonal. Moreover, $A$ is irreducible so $A+ \delta I$ is also irreducible. Thus, $A + \delta I$ is primitive, that is, there exists an integer $n > 0$ such that $(A + \delta I)^n \gg 0$. Hence, we have $e^{A + \delta I} \gg 0$, and since $\delta I$ commutes with all matrices, one has $e^A = e^{A + \delta I} e^{-\delta I} \gg 0$.
	\end{proof}
	
	We present in this section the so-called Kamke \cite{COP65} or Chaplygin \cite{CHA54} lemma for a cooperative system (Lemma \ref{lem:chaplygin}). Then, we apply this lemma to show the monotonicity of system \eqref{eqn:main2patch} in Lemma \ref{lem:monotonicity}. 
	
	\begin{lemma}
		\label{lem:chaplygin}
		For any $n \in \mathbb{N}^*$, consider a smooth function $\mathbf{f}: \mathbb{R}^n \rightarrow \mathbb{R}^n$, and a vector function $\mathbf{u}(t)$ satisfying a differential equation 
		\[\dot{\mathbf{u}} = \mathbf{f}(\mathbf{u}).\]
		Moreover, we assume that the above system is cooperative, that is,
		\begin{equation}
			\frac{\partial f_i}{\partial u_j}(t) \geq 0, \qquad \text{ for } i \neq j, \ t > 0.
			\label{eqn:cooperative}
		\end{equation}
		If a vector function $\mathbf{v}(t)$ satisfies a differential inequality 
		$\dot{\mathbf{v}} \leq \mathbf{f}(\mathbf{v})$ then, for initial data $\mathbf{v}(0) \leq \mathbf{u}(0)$, we have $\mathbf{v}(t) \leq \mathbf{u}(t)$ for all $t > 0$. 
	\end{lemma}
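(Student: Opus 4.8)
The plan is to reduce the differential inequality to a differential equation by a perturbation argument, establish the strict version of the comparison principle first, and then pass to the limit. Concretely, for $\varepsilon > 0$ let $\mathbf{u}^\varepsilon$ solve $\dot{\mathbf{u}}^\varepsilon = \mathbf{f}(\mathbf{u}^\varepsilon) + \varepsilon \mathbf{1}$ with $\mathbf{u}^\varepsilon(0) = \mathbf{u}(0) + \varepsilon \mathbf{1}$, where $\mathbf{1} = (1,\dots,1)^\top$; by continuous dependence on parameters and initial data, $\mathbf{u}^\varepsilon \to \mathbf{u}$ uniformly on compact time intervals as $\varepsilon \to 0$. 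It therefore suffices to show the \emph{strict} inequality $\mathbf{v}(t) < \mathbf{u}^\varepsilon(t)$ componentwise for all $t > 0$ and then let $\varepsilon \to 0$ to recover $\mathbf{v}(t) \leq \mathbf{u}(t)$.

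First I would prove the strict statement. At $t = 0$ we have $\mathbf{v}(0) \leq \mathbf{u}(0) < \mathbf{u}^\varepsilon(0)$, so by continuity the open set $S = \{ t > 0 : v_i(s) < u_i^\varepsilon(s) \text{ for all } i \text{ and all } 0 \le s \le t\}$ is nonempty and is an interval of the form $(0, T)$ or $(0, \infty)$. Suppose for contradiction that $T < \infty$. Then $v_i(s) < u_i^\varepsilon(s)$ for all $i$ and all $s < T$, while at $s = T$ equality holds in at least one component, say $v_k(T) = u_k^\varepsilon(T)$, and $v_j(T) \leq u_j^\varepsilon(T)$ for all $j$. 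Now compare the derivatives of $v_k$ and $u_k^\varepsilon$ at $t = T$: using the differential inequality $\dot v_k(T) \leq f_k(\mathbf{v}(T))$, the cooperativity hypothesis \eqref{eqn:cooperative} — which gives $f_k(\mathbf{v}(T)) \leq f_k(\mathbf{u}^\varepsilon(T))$ because $\mathbf{v}(T) \leq \mathbf{u}^\varepsilon(T)$ and these two vectors agree in the $k$-th coordinate, so $f_k$ is nondecreasing in exactly the coordinates where the inequality is non-strict — and finally the strict gain $\dot u_k^\varepsilon(T) = f_k(\mathbf{u}^\varepsilon(T)) + \varepsilon > f_k(\mathbf{u}^\varepsilon(T))$, we obtain
\[
\dot v_k(T) \leq f_k(\mathbf{v}(T)) \leq f_k(\mathbf{u}^\varepsilon(T)) < f_k(\mathbf{u}^\varepsilon(T)) + \varepsilon = \dot u_k^\varepsilon(T).
\]
Hence $\frac{d}{dt}(u_k^\varepsilon - v_k)(T) > 0$ while $(u_k^\varepsilon - v_k)(T) = 0$ and $(u_k^\varepsilon - v_k)(s) > 0$ for $s < T$; this forces $(u_k^\varepsilon - v_k)$ to have been increasing through zero from below, i.e. it was negative just before $T$, contradicting the definition of $T$. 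Therefore $T = \infty$ and the strict inequality holds for all $t > 0$.

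The monotonicity of $f_k$ along the segment joining $\mathbf{v}(T)$ and $\mathbf{u}^\varepsilon(T)$ is the one point deserving care, and it is exactly where \eqref{eqn:cooperative} is used: since the two endpoints differ only in coordinates $j \neq k$ and there $v_j(T) \leq u_j^\varepsilon(T)$, the value $f_k$ can only increase as we move from $\mathbf{v}(T)$ to $\mathbf{u}^\varepsilon(T)$, because $\partial f_k / \partial u_j \geq 0$ for $j \neq k$. I expect this sign-chasing step to be the main obstacle: one must be scrupulous that the perturbation $\varepsilon \mathbf{1}$ is added to \emph{all} components (so that no coordinate can catch up first without the strict derivative gain kicking in) and that the first-crossing argument is applied to whichever component $k$ realizes equality. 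Once the strict inequality is in hand, the closing limit $\varepsilon \to 0$ is immediate from uniform convergence of $\mathbf{u}^\varepsilon$ to $\mathbf{u}$, completing the proof.
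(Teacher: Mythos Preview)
The paper does not actually prove this lemma: it is stated as the classical Kamke/Chaplygin comparison lemma, with references to Coppel and Chaplygin, and is then used as a tool. So there is no ``paper's own proof'' to compare against.

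Your argument is correct and is essentially the standard proof of this result. The perturbation $\mathbf{u}^\varepsilon$ together with the first-crossing argument is exactly the right mechanism, and your use of cooperativity at the crossing time is clean: since the $k$-th components of $\mathbf{v}(T)$ and $\mathbf{u}^\varepsilon(T)$ coincide while the remaining components satisfy $v_j(T)\le u_j^\varepsilon(T)$, the off-diagonal monotonicity gives $f_k(\mathbf{v}(T))\le f_k(\mathbf{u}^\varepsilon(T))$, and the strict $+\varepsilon$ term yields the contradiction. One small wording issue: your phrase ``increasing through zero from below'' is slightly awkward; the crisp statement is that $g(s)=u_k^\varepsilon(s)-v_k(s)$ satisfies $g(s)>0$ for $s<T$ and $g(T)=0$, forcing $g'(T)\le 0$, which contradicts $g'(T)\ge\varepsilon>0$. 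A minor technical point worth one sentence is that $\mathbf{u}^\varepsilon$ exists on any compact interval where $\mathbf{u}$ exists, for $\varepsilon$ small enough, by standard continuous dependence; this justifies both the first-crossing argument and the final limit.
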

	To apply this Lemma to system \eqref{eqn:main2patch}, we first define the following order in $\mathbb{R}^8$ as follows
	\begin{definition}
		\label{def:order}
		For any vectors  $\mathbf{u},  \mathbf{v} \in \mathbb{R}^8$, we define an order $\preceq$ such that $\mathbf{u} \preceq \mathbf{v}$ if and only if 
		\[
		\begin{cases}
			u_i \leq v_i & \text{ for } i \in \{1,2,3,5,6,7\}, \\
			u_i \geq v_i & \text{ for } i \in \{4,8\}. 
		\end{cases}
		\]
		Moreover, we write $\mathbf{u} \prec \mathbf{v}$ if $\mathbf{u} \preceq \mathbf{v}$ and $\mathbf{u} \neq \mathbf{v}$. 
	\end{definition}
	
	The following result shows the comparison principle of system \eqref{eqn:main2patch}
	\begin{lemma}
		\label{lem:monotonicity}
		By denoting $\mathbf{u} = (E_1, F_1, M_1, M_1^s, E_2, F_2, M_2, M_2^s) \in \mathbb{R}^8$, we can write system \eqref{eqn:main2patch} as the form $\dot{\mathbf{u}} = \mathbf{f}(\mathbf{u})$ with $\mathbf{f}$ is $\mathcal{C}^1$ in $\mathbb{R}^8$.  In the subset  $\{0 \leq E_1 \leq K_1\} \cap \{0 \leq E_2 \leq K_2\}$ of  $\mathbb{R}^8_+$, system \eqref{eqn:main2patch} is monotone in the sense that if a vector function $\mathbf{v}(t)$ satisfies a differential inequality $\dot{\mathbf{v}} \preceq \mathbf{f}(\mathbf{v})$ then, for initial data $\mathbf{v}(0) \preceq \mathbf{u}(0)$, we have $\mathbf{v}(t) \preceq \mathbf{u}(t)$ for all $t > 0$. 
	\end{lemma}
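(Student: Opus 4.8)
The plan is to deduce the statement from the Kamke--Chaplygin Lemma~\ref{lem:chaplygin} by conjugating \eqref{eqn:main2patch} with a linear involution of $\mathbb{R}^8$ that turns the order $\preceq$ into the standard componentwise order $\leq$. Let
\[
P := \mathrm{diag}(1,1,1,-1,1,1,1,-1),
\]
so that $P=P^{-1}$ and, by Definition~\ref{def:order}, $\mathbf{u}\preceq\mathbf{v}$ if and only if $P\mathbf{u}\leq P\mathbf{v}$ componentwise. Writing $\mathbf{w}:=P\mathbf{u}$, system~\eqref{eqn:main2patch} is equivalent to $\dot{\mathbf{w}}=\mathbf{g}(\mathbf{w})$ with $\mathbf{g}(\mathbf{w}):=P\,\mathbf{f}(P\mathbf{w})$, which is again $\mathcal{C}^1$ (the time dependence of $\mathbf{f}$ through $\Lambda$ enters only additively in $\dot M_1^s$ and plays no role); moreover $\dot{\mathbf{v}}\preceq\mathbf{f}(\mathbf{v})$ is equivalent to $\tfrac{d}{dt}(P\mathbf{v})\leq\mathbf{g}(P\mathbf{v})$ and $\mathbf{v}(0)\preceq\mathbf{u}(0)$ to $P\mathbf{v}(0)\leq P\mathbf{u}(0)$. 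Thus it suffices to prove that $\mathbf{g}$ is cooperative on $P\mathcal{R}$, where $\mathcal{R}:=\{0\leq E_1\leq K_1\}\cap\{0\leq E_2\leq K_2\}$, and then apply Lemma~\ref{lem:chaplygin} to $\mathbf{w}$ and $P\mathbf{v}$, undoing the change of variables at the end.

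Next I would carry out the cooperativity check. As $P$ is diagonal with entries $\pm1$, one has $\partial g_i/\partial w_j=P_{ii}P_{jj}\,(\partial f_i/\partial u_j)$, so requiring $\partial g_i/\partial w_j\geq0$ for all $i\neq j$ is the same as asking that $\partial f_i/\partial u_j$ be $\geq0$ when $i$ and $j$ both belong to $\{1,2,3,5,6,7\}$ or both to $\{4,8\}$, and $\leq0$ when exactly one of them belongs to $\{4,8\}$. These signs are read off directly from \eqref{eqn:E1}--\eqref{eqn:Ms2}. The only nonlinearity is the mating term $r\nu_E E_i\,M_i/(M_i+\gamma M_i^s)$ in $\dot F_i$: on $\mathbb{R}^8_+$ its derivatives with respect to $E_i$ and $M_i$ are nonnegative (both indices outside $\{4,8\}$, so $\geq0$ is required), while $\partial/\partial M_i^s=-r\nu_E E_i\,\gamma M_i/(M_i+\gamma M_i^s)^2\leq0$, and here exactly one index (the sterile-male one, $4$ or $8$) lies in $\{4,8\}$, so $\leq0$ is precisely what is needed. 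Every other off-diagonal entry is the constant coefficient of a linear term — the recruitments $(1-r)\nu_E E_i$ in $\dot M_i$ and the diffusion couplings $d_{ji}F_j$, $\beta d_{ji}M_j$, $\alpha d_{ji}M_j^s$ — and each such coefficient is nonnegative while its two indices are of the same type (both outside $\{4,8\}$, or, for the $M_1^s$--$M_2^s$ coupling, both inside), so the required sign $\geq0$ holds. The sole place where the restriction to $\mathcal{R}$ is used is the entry $\partial\dot E_i/\partial F_i=b(1-E_i/K_i)$, which is $\geq0$ exactly when $E_i\leq K_i$.

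Having verified that $\mathbf{g}$ is cooperative on $P\mathcal{R}$, the conclusion follows from Lemma~\ref{lem:chaplygin}. The one point requiring care — and what I expect to be the main obstacle — is that Lemma~\ref{lem:chaplygin} assumes cooperativity globally on $\mathbb{R}^8$, whereas here it holds only on $\mathcal{R}$. This is handled by the invariance of the relevant region: $\mathbb{R}^8_+$ is forward invariant for \eqref{eqn:main2patch} (each rate is nonnegative on the corresponding coordinate hyperplane, using $\Lambda\geq0$), and the slabs $\{E_i\leq K_i\}$ are forward invariant for solutions and subsolutions alike, since $\dot E_i\leq-(\nu_E+\mu_E)K_i<0$ on $\{E_i=K_i\}$; thus, once the data lie in $\mathcal{R}$, the reference trajectory stays in $\mathcal{R}$ and the comparison argument only ever evaluates the Jacobian along the segment joining $\mathbf{v}(t)$ to $\mathbf{u}(t)$, which lies in the region where the sign conditions are valid. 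Equivalently — and perhaps cleanest to write up — one may replace $\mathbf{f}$ outside $\mathcal{R}$ by a $\mathcal{C}^1$ vector field that is globally cooperative and coincides with $\mathbf{f}$ on $\mathcal{R}$, invoke Lemma~\ref{lem:chaplygin} verbatim, and note that the modification is never seen by the trajectories under consideration.
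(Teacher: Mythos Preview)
Your approach is essentially the same as the paper's: both conjugate by the involution $(E_1,F_1,M_1,M_1^s,E_2,F_2,M_2,M_2^s)\mapsto(E_1,F_1,M_1,-M_1^s,E_2,F_2,M_2,-M_2^s)$ (your $P$), check that the transformed field is cooperative on $\{0\le E_i\le K_i\}$ by inspecting the off-diagonal partial derivatives, and then invoke Lemma~\ref{lem:chaplygin}. Your formulation via $\partial g_i/\partial w_j=P_{ii}P_{jj}\,\partial f_i/\partial u_j$ is a clean way to organize the sign check, and you go further than the paper by explicitly flagging and resolving the domain issue (Lemma~\ref{lem:chaplygin} is stated for globally cooperative fields, whereas cooperativity here only holds on $\mathcal{R}$); the paper simply works ``in the subset'' without comment.
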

	\begin{proof}
		By changing the variable to $\widetilde{\mathbf{u}} = (E_1, F_1, M_1, -M_1^s, E_2, F_2, M_2, -M_2^s)$, we can write system \eqref{eqn:main2patch} as
		\[
		\widetilde{\mathbf{u}} = \widetilde{\mathbf{f}}(\widetilde{\mathbf{u}}).
		\]
		This system is cooperative since in $\{0 \leq E_1 \leq K_1\} \cap \{0 \leq E_2 \leq K_2\}$ of  $\mathbb{R}^8_+$, we have
		\[
		\frac{\partial \widetilde{f}_1}{\partial \widetilde{u}_2} = b\left( 1 - \frac{E_1}{K_1}\right) \geq 0, \quad \frac{\partial \widetilde{f}_1}{\partial \widetilde{u}_j} = 0 \ \text{ for any } j > 2,
		\]
		\[
		\frac{\partial \widetilde{f}_2}{\partial \widetilde{u}_1} = r \nu_E \frac{M_1}{M_1 + \gamma M_1^s} \geq 0, \quad  \frac{\partial \widetilde{f}_2}{\partial \widetilde{u}_3} = r \nu_E \frac{\gamma M_1^s}{(M_1 + \gamma M_1^s)^2} \geq 0, 
		\]
		\[
		\frac{\partial \widetilde{f}_2}{\partial \widetilde{u}_4} = r\nu_E E_1 \frac{\gamma M_1}{(M_1 + \gamma M_1^s)^2} \geq 0, \quad  \frac{\partial \widetilde{f}_2}{\partial \widetilde{u}_6} = d_{21} > 0, \quad \frac{\partial \widetilde{f}_2}{\partial \widetilde{u}_j} = 0 \ \text{ for } j \in \{5,7,8\}.
		\]
		\[
		\frac{\partial \widetilde{f}_3}{\partial \widetilde{u}_1} = (1-r)\nu_E \geq 0, \quad \frac{\partial \widetilde{f}_3}{\partial \widetilde{u}_7} = \beta d_{21} > 0, \quad  \frac{\partial \widetilde{f}_1}{\partial \widetilde{u}_j} = 0 \ \text{ for } j \in \{2, 4, 5, 6, 8\} ,
		\]
		\[
		\frac{\partial \widetilde{f}_4}{\partial \widetilde{u}_8} = \alpha d_{21} > 0, \frac{\partial \widetilde{f}_4}{\partial \widetilde{u}_j} = 0 \ \text{ for } j \in \{1, 2, 3, 5, 6, 7\}. 
		\]
		Similarly for $\widetilde{f}_i$ with $i > 4$, so $\widetilde{\mathbf{f}}$ is cooperative. 
		
		For any vector function $\mathbf{v}$ such that $\dot{\mathbf{v}} \preceq \mathbf{f}(\mathbf{v})$, by the same variable change, one has $\dot{\widetilde{\mathbf{v}}} \leq \widetilde{\mathbf{f}}(\widetilde{\mathbf{v}})$. The initial data $\mathbf{v}(0) \preceq \mathbf{u}(0)$ implies that $\widetilde{\mathbf{v}}(0) \leq \widetilde{\mathbf{u}}(0)$. Therefore, by applying Lemma \ref{lem:chaplygin}, one has $\widetilde{\mathbf{v}}(t) \leq \widetilde{\mathbf{u}}(t)$ for any $t > 0$ which is equivalent to $\mathbf{v}(t) \preceq \mathbf{u}(t)$. 
	\end{proof}
	\medskip 
	
	In order to define the solution of \eqref{eqn:main2patch}, we make some assumptions for the release function $\Lambda(t)$
	\begin{assumption}
		Assume that function $\Lambda(t)$ satisfies
		\begin{equation}
			\Lambda(t) = \Lambda_1(t) + \Lambda_2(t),
			\label{eqn:sumLambda}
		\end{equation}
		where $\Lambda_1 \in L^1_\mathrm{loc}(0,+\infty)$, $\Lambda_1(t) \geq 0$ for almost every $t$, and $\Lambda_2$ is a sum of Dirac masses with positive weights. More precisely, for $0 < t_1 < t_2 < \dots $, one has
		\begin{equation}
			\Lambda_2(t) = \displaystyle \sum_{i=0}^{+\infty} w_k \delta_{t_k}(t) \qquad \text{ with } w_k > 0.
			\label{eqn:comb}
		\end{equation}
		 Assume moreover that there exists a time $T > 0$ such that the average value of $\Lambda$ over any $T$-time interval is finite, that is, 
		\begin{equation}
			C_\Lambda := \frac{1}{T} \sup_{t \geq 0} \int_t^{t+T} \Lambda(s) ds < +\infty.
			\label{eqn:CLambda}
		\end{equation}
		\label{hyp:Lambda}
	\end{assumption}
	Assumption \ref{hyp:Lambda} is natural since in practice, the total amount of the sterile males released in a finite time interval is finite. The term $\Lambda_2$ corresponds to impulsive releases.
	
	The next result shows that any trajectory of system \eqref{eqn:main2patch} resulting from any non-negative initial data is bounded.
	\begin{lemma}
		\label{lem:bound}
		Let $\Lambda$ satisfy Assumption \ref{hyp:Lambda}. For any non-negative initial data $(E_1^0, F_1^0, M_1^0, M_1^{s,0}, E_2^0, F_2^0, M_2^0, M_2^{s,0})$, there exists a unique solution $(E_1, F_1, M_1, M_1^s, E_2, F_2, M_2, M_2^s)$ of system \eqref{eqn:main2patch} that is smooth on each interval $(t_k, t_{k+1})$ of the impulsive function $\Lambda_2$ defined in \eqref{eqn:comb} with $k = 0,1,\dots$, and it is non-negative. If $E_i^0 < K_i$ with $i = 1, 2$, then $E_i(t) \leq K_i$ for any $t > 0$. Moreover, for all $t \geq 0$, we have the uniform bounds
		\[
		F_1 + F_2 \leq \max \left\{  F_1^0 + F_2^0, \ C_F \right\}, \quad M_1 + M_2 \leq \max \left\{  M_1^0 + M_2^0, \ C_M \right\}, 
		\]
		where 
		\[
		C_F := \frac{r \nu_E (K_1 + K_2)}{\mu_F} , 
		\ C_M := \frac{(1-r) \nu_E (K_1 + K_2)}{\mu_M},
		\]
		and 
		\[
		M_1^s(t) + M_2^s(t) \leq \max\left\{ M_1^{s,0} + M_2^{s,0}, \frac{TC_\Lambda}{1 - e^{-\mu_s T}}\right\} + T C_\Lambda, 
		\]
		with $T$ and $C_\Lambda$ defined in Assumption \ref{hyp:Lambda}. 
		One also has 
		\[
		\limsup_{t \rightarrow +\infty} (F_1 + F_2)(t) \leq C_F, \quad  \limsup_{t \rightarrow +\infty} (M_1 + M_2)(t) \leq C_M, \quad,  
		\]
		and 
		\[
		\limsup_{t \rightarrow +\infty} (M_1^s + M_2^s)(t) \leq \frac{TC_\Lambda}{1 - e^{-\mu_s T}}+ TC_\Lambda =: C_{M^s}.
		\]
	\end{lemma}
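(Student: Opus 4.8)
The plan is to prove the four assertions in turn, the recurring device being that the inter-patch diffusion terms cancel as soon as the two patches are added. First, on each inter-impulse interval $(t_k,t_{k+1})$ the system reads $\dot{\mathbf u}=\mathbf f(\mathbf u)+\Lambda_1(t)\mathbf e_4$ with $\mathbf f$ the $\mathcal C^1$ field from Lemma \ref{lem:monotonicity} — where the mating terms $M_i/(M_i+\gamma M_i^s)$ are extended by $0$ on $\{M_i=0\}$, which keeps the field locally Lipschitz away from the origin of each $(M_i,M_i^s)$-plane — and $\Lambda_1\in L^1_{\mathrm{loc}}$; Carathéodory's existence–uniqueness theorem then yields a unique maximal solution on such an interval, and one prolongs across $t_k$ by the jump $M_1^s(t_k^+)=M_1^s(t_k^-)+w_k$ (no other variable jumps, since $\Lambda$ occurs only in \eqref{eqn:Ms1}). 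Because $t_k\to+\infty$, gluing these pieces gives a solution on all of $[0,+\infty)$ once we know it does not escape to infinity, which the bounds below guarantee a posteriori.

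Next, non-negativity: the orthant $\mathbb R^8_+$ is invariant because the vector field is quasi-positive, i.e.\ whenever a coordinate vanishes with all others $\ge 0$ the matching component of $\mathbf f$ is $\ge 0$; for instance $\dot E_1|_{E_1=0}=bF_1\ge0$, $\dot F_1|_{F_1=0}=r\nu_E E_1\,\tfrac{M_1}{M_1+\gamma M_1^s}+d_{21}F_2\ge0$, $\dot M_1|_{M_1=0}=(1-r)\nu_E E_1+\beta d_{21}M_2\ge0$, $\dot M_1^s|_{M_1^s=0}=\Lambda+\alpha d_{21}M_2^s\ge0$, $\dot M_2^s|_{M_2^s=0}=\alpha d_{12}M_1^s\ge0$, and symmetrically for $E_2,F_2,M_2$; the impulses only add to $M_1^s$, so they also preserve positivity. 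For the cap, observe that $F_i$ and hence $E_i$ are continuous along the solution (only $M_1^s$ jumps), so $E_i\in\mathcal C^1$, and at any instant where $E_i=K_i$ one has $\dot E_i=-(\nu_E+\mu_E)K_i<0$; thus the level $E_i=K_i$ cannot be reached from below, giving $E_i(t)\le K_i$ whenever $E_i^0<K_i$ (and, in general, $E_i(t)\le\max\{E_i^0,K_i\}$ with $\limsup_{t\to\infty}E_i(t)\le K_i$).

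The uniform bounds come from summing the two patches. Adding \eqref{eqn:F1} and \eqref{eqn:F2} cancels the diffusion terms and, using $\tfrac{M_i}{M_i+\gamma M_i^s}\le1$ together with the cap $E_i\le K_i$, gives $\tfrac{d}{dt}(F_1+F_2)\le r\nu_E(K_1+K_2)-\mu_F(F_1+F_2)$; a scalar comparison (Grönwall) then yields $F_1+F_2\le\max\{F_1^0+F_2^0,C_F\}$ and $\limsup_{t\to\infty}(F_1+F_2)\le C_F$. The same manipulation on \eqref{eqn:M1}–\eqref{eqn:M2} gives the $M_1+M_2$ bound with constant $C_M$. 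For $S:=M_1^s+M_2^s$, adding \eqref{eqn:Ms1} and \eqref{eqn:Ms2} removes the diffusion and leaves, in the Carathéodory sense, $\dot S=\Lambda-\mu_s S$, i.e.\ $S(t)=e^{-\mu_s t}S(0)+\int_0^t e^{-\mu_s(t-s)}\,d\Lambda(s)$ with the Dirac part of $\Lambda$ included in the integral. Bounding $\int_{nT}^{(n+1)T}e^{-\mu_s((n+1)T-s)}\,d\Lambda(s)\le\int_{nT}^{(n+1)T}d\Lambda\le TC_\Lambda$ gives the recursion $S((n+1)T)\le e^{-\mu_sT}S(nT)+TC_\Lambda$, whose fixed point is $TC_\Lambda/(1-e^{-\mu_sT})$; an induction then shows $S(nT)\le\max\{S(0),TC_\Lambda/(1-e^{-\mu_sT})\}$ with $\limsup_n S(nT)\le TC_\Lambda/(1-e^{-\mu_sT})$. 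Finally, for $t\in(nT,(n+1)T)$ one has $S(t)\le S(nT)+\int_{nT}^{t}d\Lambda\le S(nT)+TC_\Lambda$, which is exactly the asserted uniform bound on $M_1^s+M_2^s$, and the corresponding $\limsup$ follows.

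The only genuinely delicate point is this last one: one must carry the Dirac masses through the variation-of-constants formula correctly and choose the ``sample at multiples of $T$, then add one partial period'' decomposition precisely, so that the constant $TC_\Lambda/(1-e^{-\mu_sT})+TC_\Lambda$ — rather than a crude weaker one — is what drops out. Everything else is a routine application of the comparison principle, the only additional care being the $L^1_{\mathrm{loc}}$ source and the impulse times when invoking Carathéodory's theorem.
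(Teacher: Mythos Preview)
Your proof is correct and follows essentially the same route as the paper: positive invariance of the orthant and of $\{E_i\le K_i\}$ via the sign of the field on the boundary, cancellation of the diffusion by summing the two patches to obtain linear differential inequalities for $F_1+F_2$ and $M_1+M_2$, and the same ``sample at multiples of $T$, then add one partial period'' recursion $S((n{+}1)T)\le e^{-\mu_sT}S(nT)+TC_\Lambda$ for $S=M_1^s+M_2^s$. The only cosmetic differences are that you invoke Carath\'eodory explicitly for the $L^1_{\mathrm{loc}}$ source and flag the regularity of the mating fraction at $(M_i,M_i^s)=(0,0)$, whereas the paper simply asserts $\mathbf f\in\mathcal C^1$; and the paper writes out the closed-form iterate $X_s(kT)\le e^{-\mu_skT}X_s^0+TC_\Lambda\frac{1-e^{-\mu_skT}}{1-e^{-\mu_sT}}$ where you state the equivalent induction bound directly.
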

	\begin{remark}
		In the case $\Lambda \in L^\infty(0,+\infty)$, one can let $T$ tend to zero and obtain that $C_\Lambda = \sup_{t > 0} \Lambda(t)$ and $\displaystyle \limsup_{t \rightarrow +\infty} (M_1^s + M_2^s) \leq \frac{C_\Lambda}{\mu_s}$. 
		The condition of $\Lambda$ that we made in Assumption \ref{hyp:Lambda} is weaker than the $L^\infty$ assumption since we also include impulsive releases, represented by a sum of Dirac masses.
	\end{remark}
	\begin{proof}[Proof of Lemma \ref{lem:bound}]	
		By denoting $\mathbf{u} = (E_1, F_1, M_1, M_1^s, E_2, F_2, M_2, M_2^s) \in \mathbb{R}^8$, we can write system \eqref{eqn:main2patch} as the form $\dot{\mathbf{u}} = \mathbf{f}(\mathbf{u})$ with $\mathbf{f} = (f_1, f_2, \dots, f_8)$ is $\mathcal{C}^1$ in $\mathbb{R}^8$. We easily observe that in the subset $\{0 \leq E_1 \leq K_1\} \cap \{0 \leq E_2 \leq K_2\}$ of  $\mathbb{R}^8_+$, one has $f_i(u_i = 0) \geq 0$ for $i = 1, \dots, 8$, and $f_1(E_1 = K_1) \leq 0$, $f_5(E_2 = K_2) \leq 0$. Therefore, the vector field $\mathbf{f}$ on the boundary of $\mathbb{R}^8_+ \cap \{0 \leq E_1 \leq K_1\} \cap \{0 \leq E_2 \leq K_2\}$ is inward or tangential, so this set is positively invariant.	
		
		From equations \eqref{eqn:F1} and \eqref{eqn:F2}, since for $i = 1, 2$,  we have $\dfrac{M_i}{M_i + \gamma M_i^s} \leq 1$, and 
		\[
		\dot F_1 + \dot F_2 \leq r \nu_E (E_1 + E_2) - \mu_F (F_1 + F_2). 
		\]
		Since $E_1, \ E_2$ are bounded then we deduce that 
		\[
		(F_1 + F_2)(t) \leq (F_1^0 + F_2^0) e^{-\mu_F t} + \frac{r \nu_E (K_1 + K_2)}{\mu_F}(1 - e^{-\mu_F t}) \leq \max \left\{  F_1^0 + F_2^0, \ \frac{r \nu_E (K_1 + K_2)}{\mu_F} \right\},
		\]
		for any $t \geq 0$. For $i = 1, 2$, one has $F_i \geq 0$, thus $F_i(t) \leq \max \left\{  F_1^0 + F_2^0, \ C_F \right\}$ for any $t \geq 0$.  Let $t$ go to infinity we get $\displaystyle \limsup_{t \rightarrow +\infty} (F_1 + F_2)(t) \leq C_F$. One obtains similarly the inequalities for $M_1, M_2$. 
		
		For $M_1^s$ and $M_2^s$, by denoting $X_s(t) = M_1^s(t) + M_2^s(t)$, then from equations \eqref{eqn:Ms1} and \eqref{eqn:Ms2}, one has
		\[
		\dot{X}_s(t) = -\mu_s X_s(t) + \Lambda(t). 
		\]
		For any integer $k$, by integrating both sides of this equality in $((k-1)T, kT)$ with $T$ defined in Assumption \ref{hyp:Lambda}, one gets 
		\[
		X_s(kT) = e^{-\mu_s T} X_s((k-1)T) + \int_{(k-1)T}^{kT} e^{-\mu_s (t -(k-1)T)} \Lambda(t) dt. 
		\]
		Since $e^{-\mu_s (t -(k-1)T)} < 1$ for any $t \in ((k-1)T, kT)$ and by Assumption \ref{hyp:Lambda}, we deduce that 
		\[
		X_s(kT) \leq e^{-\mu_s T} X_s((k-1)T) + T C_\Lambda,
		\] 
		with $C_\Lambda$ defined in \eqref{eqn:CLambda}. Using the iteration with respect to $k$, we deduce that 
		\[
		X_s(kT) \leq e^{-\mu_s kT} X_s^0 + T C_\Lambda \left(1 + e^{-\mu_s T} + \dots + e^{-\mu_s (k-1) T}\right) = e^{-\mu_s kT} X_s^0 + T C_\Lambda \frac{1-e^{-\mu_s kT}}{1-e^{-\mu_s T}}. 
		\]
		Now for any time $t > 0$, there exists an integer $k$ such that $t \in [kT, (k+1)T)$. Then, we obtain that 
		\[
		\begin{split}
			X_s(t) & = e^{-\mu_s(t - kT)} X_s(kT) + \int_{kT}^{t} e^{-\mu_s (t-s)} \Lambda(s) ds \\
			& \leq  e^{-\mu_st} X_s^0 + TC_\Lambda \frac{e^{-\mu_s (t - kT)} - e^{-\mu_s t}}{1 - e^{-\mu_s T}} + T C_\Lambda.  \\
			& \leq  e^{-\mu_st} X_s^0 +  \frac{TC_\Lambda}{1 - e^{-\mu_s T}} \left(1 - e^{-\mu_s t}\right) + T C_\Lambda \\ 
		\end{split}
		\]
		since $ e^{-\mu_s (t - kT)} < 1$. The inequality of $M_1^s + M_2^s$ follows. 
	\end{proof}
	
	\section{Mosquito dynamics without sterile males \label{sec:natural}}
	First, we describe the dynamics of wild mosquitoes in the two areas by considering the following system which is re-obtained from system \eqref{eqn:main2patch} in the absence of sterile males  
	\begin{subequations}
		\label{eqn:natural}
		\begin{align}
			\dot{E_1} &= bF_1\left( 1 - \frac{E_1}{K_1}\right) - (\nu_E + \mu_E) E_1, \label{eqn:E1n} \\
			\dot{F_1} &= r\nu_E E_1 - \mu_F F_1 - d_{12} F_1 + d_{21} F_2, \label{eqn:F1n} \\
			\dot{M_1} & = (1-r)\nu_E E_1 - \mu_M M_1 - \beta d_{12} M_1 + \beta d_{21} M_2, \label{eqn:M1n} \\
			\dot{E_2} &= bF_2\left( 1 - \frac{E_2}{K_2}\right) - (\nu_E + \mu_E) E_2, \label{eqn:E2n} \\
			\dot{F_2} &= r\nu_E E_2 - \mu_F F_2 - d_{21} F_2 + d_{12} F_1, \label{eqn:F2n}  \\
			\dot{M_2} & = (1-r)\nu_E E_2 - \mu_M M_2 - \beta d_{21} M_2 + \beta d_{12} M_1, \label{eqn:M2n}
		\end{align}
	\end{subequations}
	It is clear that the subset $\{0 \leq E_1 \leq K_1\} \cap \{0 \leq E_2 \leq K_2\}$ of the positive cone of $\mathbb{R}^6$ is positively invariant over time. 
	The following result shows the nature of the equilibrium points of system \eqref{eqn:natural}.   
	\begin{theorem}
		\label{thm:equi1}
		For $\mathcal{N} \leq 1$, zero is the unique equilibrium of system \eqref{eqn:natural}, and all trajectories of \eqref{eqn:natural} resulting from non-negative initial data converge to zero as time evolves.
		
		For $\mathcal{N} > 1$, system (\ref{eqn:natural}) has two equilibrium points: zero and $\mathbf{u}^+ = (E_1^+, F_1^+, M_1^+, E_2^+, F_2^+, M_2^+)$ strictly positive. Moreover, the zero equilibrium is unstable. All trajectories of \eqref{eqn:natural} resulting from any positive initial data $(E_1^0, F_1^0, M_1^0, E_2^0, F_2^0, M_2^0)$ such that $(E_1^0, F_1^0, E_2^0, F_2^0)  > 0$ converge to $\mathbf{u}^+$ when $t \rightarrow +\infty$. 
	\end{theorem}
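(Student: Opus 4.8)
The plan is to reduce to a closed four–dimensional cooperative subsystem, read the threshold off the linearization at the origin, and then run a monotone–comparison argument in each regime. First I would note that the equations for $(E_1,F_1,E_2,F_2)$ form a closed subsystem, while the equations for $(M_1,M_2)$ are linear and cooperative with inputs $(E_1,E_2)$, so once the $(E,F)$–dynamics is understood the $M$–equations converge to the corresponding steady components; it thus suffices to treat the $4$‑dimensional subsystem on the positively invariant set $\mathcal B:=\{0\le E_i\le K_i,\ F_i\ge 0\}$, on which trajectories are bounded by Lemma~\ref{lem:bound}. On $\mathcal B$ this subsystem is cooperative, and in its interior the Jacobian is irreducible (the dependency graph $E_1\!-\!F_1\!-\!F_2\!-\!E_2$ is strongly connected because $d_{12},d_{21}>0$), hence it generates a monotone semiflow $\phi_t$ that is strongly monotone in the interior. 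Writing $\mathbf f$ for the vector field and $J_0:=D\mathbf f(\mathbf 0)$, I would record two facts used throughout: (i) $\mathbf f(\mathbf v)\le J_0\mathbf v$ on $\mathcal B$, with equality in every component except the two $E_i$–components, where the gap is $-bE_iF_i/K_i$; (ii) strict sublinearity, $\mathbf f(\sigma\mathbf v)>\sigma\mathbf f(\mathbf v)$ for $0<\sigma<1$ and $\mathbf v\gg\mathbf 0$, again caused by those quadratic terms. Finally, the only equilibrium on $\partial\mathcal B$ is the origin (e.g.\ $E_1=0$ forces $F_1=0$, then $F_2=0$ since $d_{21}>0$, then $E_2=0$), so every nonzero equilibrium is strictly positive.

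For the threshold, $J_0$ is Metzler and irreducible, so $s(J_0)$ is a real eigenvalue with strictly positive right and left eigenvectors. Decomposing $J_0=\mathcal F-\mathcal V$ with $\mathcal F\ge 0$ the fecundity block (females $\mapsto$ aquatic at rate $b$) and $\mathcal V$ the block gathering emergence, mortality and diffusion, a short block computation gives $\rho(\mathcal F\mathcal V^{-1})=\dfrac{br\nu_E}{\mu_E+\nu_E}\,\rho(L^{-1})$, where $L$ is the $2\times2$ matrix of female mortality and diffusion; since the eigenvalues of $L$ are $\mu_F$ and $\mu_F+d_{12}+d_{21}$, this equals $\mathcal N$. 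By the standard sign correspondence, $s(J_0)$ and $\mathcal N-1$ have the same sign; in particular the two‑patch threshold is still the single number $\mathcal N$, precisely because diffusion contributes only to $\mathcal V$ and the vital rates agree on the two patches.

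Case $\mathcal N\le 1$, i.e.\ $s(J_0)\le 0$. There is no strictly positive equilibrium: if $\mathbf f(\mathbf p)=\mathbf 0$ with $\mathbf p\gg\mathbf 0$, then pairing $J_0\mathbf p-\mathbf f(\mathbf p)\ge\mathbf 0$ (and $\neq\mathbf 0$ by (i), since $\mathbf p\gg\mathbf 0$) with the positive left eigenvector $\mathbf z$ of $J_0$ yields $0<\mathbf z^{\!\top}(J_0\mathbf p-\mathbf f(\mathbf p))=s(J_0)\,\mathbf z^{\!\top}\mathbf p\le 0$, a contradiction; hence $\mathbf 0$ is the unique equilibrium. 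For global attraction, let $\mathbf w\gg\mathbf 0$ with $J_0\mathbf w=s(J_0)\mathbf w$ and pick $\kappa$ with $\mathbf u(0)\le\kappa\mathbf w$. If $\mathcal N<1$, (i) and Lemma~\ref{lem:chaplygin} give $\mathbf 0\le\mathbf u(t)\le\kappa e^{s(J_0)t}\mathbf w\to\mathbf 0$. If $\mathcal N=1$, then $s(J_0)=0$ and $\mathbf f(\kappa\mathbf w)\le J_0(\kappa\mathbf w)=\mathbf 0$ with negative $E$–components, so $\kappa\mathbf w$ is a strict supersolution; the solution from it is nonincreasing and converges to an equilibrium in $[\mathbf 0,\kappa\mathbf w]$, necessarily $\mathbf 0$, and squeezing $\mathbf u$ below it gives $\mathbf u(t)\to\mathbf 0$. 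The slaved $M$–equations then force $M_i(t)\to 0$.

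Case $\mathcal N>1$, i.e.\ $s(J_0)>0$; then $\mathbf 0$ is unstable. For $\epsilon>0$ small, $\mathbf f(\epsilon\mathbf w)\gg\mathbf 0$ (the $F$–components are $\epsilon s(J_0)w_i>0$, the $E$–components $\epsilon w_i(s(J_0)-O(\epsilon))>0$), so $\epsilon\mathbf w$ is a strict subsolution and the solution from it increases to an equilibrium $\mathbf u^+\ge\epsilon\mathbf w\gg\mathbf 0$. Uniqueness: if $\mathbf p\gg\mathbf 0$ is another equilibrium, choosing $\epsilon$ with $\epsilon\mathbf w\le\mathbf p$ gives $\mathbf u^+\le\mathbf p$; if $\mathbf u^+\ne\mathbf p$, set $\sigma^\ast=\sup\{\sigma:\mathbf u^+\ge\sigma\mathbf p\}\in(0,1)$, so by (ii) $\mathbf f(\sigma^\ast\mathbf p)>\mathbf 0$, whence $\phi_t(\sigma^\ast\mathbf p)\gg\sigma^\ast\mathbf p$ for $t>0$ by strong monotonicity while $\phi_t(\sigma^\ast\mathbf p)\le\mathbf u^+$, giving $\mathbf u^+\gg\sigma^\ast\mathbf p$ and contradicting maximality of $\sigma^\ast$. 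For attraction from $\mathbf u(0)$ with $(E_1^0,F_1^0,E_2^0,F_2^0)>0$: after an arbitrarily short time the $(E,F)$–coordinates become strictly positive by strong monotonicity, so I may assume that part is $\gg\mathbf 0$, then sandwich $\mathbf u$ between $\phi_t(\epsilon\mathbf w)\uparrow\mathbf u^+$ (below) and $\phi_t(\widehat{\mathbf u}_\theta)\downarrow\mathbf u^+$ (above), where $\widehat{\mathbf u}_\theta$ has $E$–components $K_i$ and $F$–components $\theta\,(L^{-1}\mathbf c)_i$ with $\mathbf c=r\nu_E(K_1,K_2)^{\!\top}$ and $\theta>1$ chosen so that $\widehat{\mathbf u}_\theta\ge\mathbf u(0)$ and $\widehat{\mathbf u}_\theta\ge\mathbf u^+$; since $L^{-1}\gg 0$, one checks $\mathbf f(\widehat{\mathbf u}_\theta)<\mathbf 0$, and $\phi_t(\widehat{\mathbf u}_\theta)$ decreases to an equilibrium $\ge\mathbf u^+$, hence to $\mathbf u^+$. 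The slaved $M$–equations then give $M_i(t)\to M_i^+$. I expect the two non‑routine points to be the identity $\rho(\mathcal F\mathcal V^{-1})=\mathcal N$ (which is what keeps the critical value equal to the one‑patch $\mathcal N$) and the global attraction for $\mathcal N>1$: the naive box $(K_1,\bar F,K_2,\bar F)$ is not a supersolution once $d_{12}\ne d_{21}$, so one must balance the diffusive flux through $L^{-1}\gg 0$, and the critical sub‑case $\mathcal N=1$ needs the strict sublinearity (i) rather than a mere linear comparison.
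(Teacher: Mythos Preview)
Your argument is correct and takes a genuinely different route from the paper's. The paper proves existence and uniqueness of the positive equilibrium by an explicit scalar reduction: eliminating $F_1^+,F_2^+$ yields two relations $E_2^+=f_{21}(E_1^+)$ and $E_1^+=f_{12}(E_2^+)$, and a convexity analysis of $f_{21}-f_{12}^{-1}$ (with a case split according to whether $\mathcal N$ exceeds $\frac{\mu_F+d_{12}+d_{21}}{\mu_F+d_{ij}}$) pins down exactly when a positive root exists. Instability of $\mathbf 0$ is read off the explicit characteristic polynomial, and global attraction for $\mathcal N>1$ is obtained by sandwiching between the increasing trajectory from below $(E_1^+,F_1^+,E_2^+,F_2^+)$ and the decreasing trajectory from the supersolution $\lambda(E_1^+,F_1^+,E_2^+,F_2^+)$ with $\lambda>1$; the $M$--components are then handled by an explicit Duhamel estimate. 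By contrast, you never write down the equilibrium relations: you identify the threshold via the next--generation decomposition $J_0=\mathcal F-\mathcal V$ and the identity $\rho(\mathcal F\mathcal V^{-1})=\mathcal N$ (this is the genuinely new computation in your write--up, and it is correct since $L$ has eigenvalues $\mu_F$ and $\mu_F+d_{12}+d_{21}$); you exclude positive equilibria for $\mathcal N\le 1$ by pairing with the Perron left eigenvector, and for $\mathcal N>1$ you get uniqueness from strict sublinearity rather than from convexity of $f_{ij}$. Your supersolution $\widehat{\mathbf u}_\theta$ with $E_i=K_i$ and $F$--part $\theta L^{-1}\mathbf c$ is different from the paper's $\lambda\mathbf u^+$ and has the advantage of not leaving the invariant box $\{E_i\le K_i\}$. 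The paper's explicit $f_{ij}$ analysis pays off later (the same template reappears as $g_{ij}$ in the proof of Proposition~\ref{prop:rho}), whereas your spectral/sublinearity argument is more portable (it extends to $n$ patches with no change) and, unlike the paper, treats the borderline case $\mathcal N=1$ explicitly. One small point to tighten: when you write ``choosing $\epsilon$ with $\epsilon\mathbf w\le\mathbf p$ gives $\mathbf u^+\le\mathbf p$'', make sure $\mathbf u^+$ is defined as the \emph{minimal} positive equilibrium (equivalently, that the increasing limit from $\epsilon\mathbf w$ is independent of small $\epsilon$); your sublinearity argument in fact proves this, but say so before invoking it.
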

	
	Theorem \ref{thm:equi1} shows that the constant $\mathcal{N}$ defined in \eqref{eqn:bon} is the basic offspring number of the whole two-patch system \eqref{eqn:main2patch}. When $\mathcal{N} > 1$, the populations in both areas remain persistent for any diffusion rates as time evolves. In the rest of the paper, we only consider the case $\mathcal{N} > 1$. 
	
	To prove this theorem, we first consider the sub-system of $E_1, \ E_2, \ F_1, \ F_2$. From equations (\ref{eqn:F1n}) and (\ref{eqn:F2n}), the positive equilibrium satisfies 
	\[
	r\nu_E\begin{pmatrix}
		E_1^+ \\ E_2^+
	\end{pmatrix} = \begin{pmatrix}
		\mu_F + d_{12} & -d_{21} \\
		-d_{12} & \mu_F + d_{21}
	\end{pmatrix} \begin{pmatrix}
		F_1^+ \\ F_2^+
	\end{pmatrix}, 
	\]
	then 
	\begin{equation}
		\begin{pmatrix}
			F_1^+ \\ F_2^+
		\end{pmatrix} = \frac{r\nu_E}{\mu_F(\mu_F + d_{12} + d_{21})}\begin{pmatrix}
			\mu_F + d_{21} & d_{21} \\
			d_{12} & \mu_F + d_{12}
		\end{pmatrix}\begin{pmatrix}
			E_1^+ \\ E_2^+
		\end{pmatrix}.
		\label{eqn:relation1}
	\end{equation}
	On the other hand, from equation (\ref{eqn:E1n}) and (\ref{eqn:E2n}), we also have 
	\begin{equation}
		F_1^+ = \frac{\nu_E + \mu_E}{b} \frac{E_1^+}{1 - \frac{E_1^+}{K_1}}, \qquad 	F_2^+ = \frac{\nu_E + \mu_E}{b} \frac{E_2^+}{1 - \frac{E_2^+}{K_2}}. 
		\label{eqn:relation2}
	\end{equation}
	From (\ref{eqn:relation1}) and (\ref{eqn:relation2}), we deduce that 
	\begin{equation}
		E_2^+ = \frac{\mu_F + d_{12} + d_{21}}{d_{21}}\frac{1}{\mathcal{N}} \frac{E_1^+}{1-\frac{E_1^+}{K_1}} - \frac{\mu_F + d_{21}}{d_{21}} E_1^+ =: f_{21}(E_1^+),
		\label{eqn:f21}
	\end{equation}
	\begin{equation}
		E_1^+ = \frac{\mu_F + d_{12} + d_{21}}{d_{12}}\frac{1}{\mathcal{N}} \frac{E_2^+}{1-\frac{E_2^+}{K_2}} - \frac{\mu_F + d_{12}}{d_{12}} E_2^+ =: f_{12}(E_2^+).
		\label{eqn:f12}
	\end{equation}
	The following lemma provides information for these functions. 
	\begin{lemma}
		For $\{i, j\} = \{1, 2\}$, function $f_{ij}(x)$ is defined and convex on $(0,K_j) \subset \mathbb{R}$. 
		
		If $\mathcal{N} \leq \dfrac{\mu_F + d_{ij} + d_{ji}}{\mu_F + d_{ij}}$, then $f_{ij}$ has no positive root and it is increasing on $(0,K_j)$. 
		
		\noindent Otherwise, it has a unique positive root 
		\begin{equation}
			\label{eqn:Kf}
			K_j^+ := K_j\left(1 - \dfrac{\mu_F + d_{ji} + d_{ij}}{\mu_F + d_{ij}} \dfrac{1}{\mathcal{N}}\right) < K_j
		\end{equation}
		Moreover, $f_{ij} < 0$ on $ (0,K_j^+)$, $f_{ij} > 0$ and increasing on $(K_j^+,K_j)$. 
		\label{lem:fij}
	\end{lemma}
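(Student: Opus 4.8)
The plan is to treat $f_{ij}$ as a scalar function of one real variable and to read off every assertion from the behaviour of $f_{ij}$ near the two endpoints together with the sign of its second derivative. The convenient starting point is the identity $\frac{x}{1-x/K_j} = -K_j + \frac{K_j^2}{K_j-x}$, valid on $(0,K_j)$, after which everything is an elementary computation. Writing $A := \frac{\mu_F + d_{ij} + d_{ji}}{d_{ij}\mathcal{N}} > 0$ and $B := \frac{\mu_F+d_{ij}}{d_{ij}}>0$, one has $f_{ij}(x) = A\bigl(-K_j + \frac{K_j^2}{K_j-x}\bigr) - Bx$ on $(0,K_j)$, where $K_j - x > 0$; this makes $f_{ij}$ well defined and smooth on $(0,K_j)$, and a direct differentiation gives $f_{ij}''(x) = \frac{2AK_j^2}{(K_j-x)^3} > 0$, so $f_{ij}$ is strictly convex there and $f_{ij}'$ is strictly increasing.

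Next I would record the two boundary facts $f_{ij}(0) = 0$ and $f_{ij}(x) \to +\infty$ as $x \to K_j^-$, and compute the derivative at the origin, $f_{ij}'(0) = A - B = \frac{1}{d_{ij}}\bigl(\frac{\mu_F+d_{ij}+d_{ji}}{\mathcal{N}} - (\mu_F+d_{ij})\bigr)$. Its sign is precisely what distinguishes the two cases of the lemma: $f_{ij}'(0) \ge 0$ exactly when $\mathcal{N} \le \frac{\mu_F+d_{ij}+d_{ji}}{\mu_F+d_{ij}}$. In that first regime, combining $f_{ij}'(0)\ge 0$ with the fact that $f_{ij}'$ is strictly increasing forces $f_{ij}' > 0$ on all of $(0,K_j)$; hence $f_{ij}$ is increasing, and since $f_{ij}(0)=0$ it is positive on $(0,K_j)$, so it has no positive root.

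In the complementary regime $f_{ij}'(0) < 0$, and I would first locate the root: solving $f_{ij}(x)=0$ with $x \neq 0$ reduces, after clearing the denominator, to $1 - x/K_j = A/B$, so the unique nonzero solution is $x = K_j(1 - A/B) = K_j^+$ as in \eqref{eqn:Kf}; moreover $A < B$ with $A,B > 0$ gives $A/B \in (0,1)$, so $K_j^+ \in (0,K_j)$. For the sign and monotonicity on the two sides of $K_j^+$, I would apply Rolle's theorem on $[0,K_j^+]$ (using $f_{ij}(0)=f_{ij}(K_j^+)=0$) to obtain a critical point $x^* \in (0,K_j^+)$; strict convexity makes $x^*$ the unique zero of $f_{ij}'$, so $f_{ij}$ is strictly decreasing on $(0,x^*)$ and strictly increasing on $(x^*,K_j)$. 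Reading this against the two known zeros yields $f_{ij} < 0$ on $(0,K_j^+)$ and $f_{ij} > 0$ and increasing on $(K_j^+,K_j)$.

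I do not anticipate a genuine obstacle; the whole statement is sign-chasing driven by $f_{ij}''$ and $f_{ij}'(0)$. The only point that deserves a moment's care is verifying that the critical point $x^*$ furnished by Rolle's theorem lies strictly to the left of $K_j^+$ — which is exactly what applying Rolle on $[0,K_j^+]$ rather than on $[0,K_j)$ guarantees — so that the claim ``increasing on $(K_j^+,K_j)$'' holds without exception.
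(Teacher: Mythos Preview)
Your proof is correct and follows essentially the same approach as the paper's: both compute $f_{ij}''>0$ for convexity, locate the roots by clearing the denominator, and read off the sign and monotonicity conclusions. Your version is slightly more detailed in that you invoke Rolle's theorem to pin down the critical point $x^*<K_j^+$, whereas the paper simply asserts the sign pattern and monotonicity after finding the roots.
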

	\begin{proof}[Proof of Lemma \ref{lem:fij}]
		We recall function $f_{ij}(x):= \dfrac{\mu_F + d_{ij} + d_{ji}}{d_{ij}}\dfrac{1}{\mathcal{N}} \dfrac{x}{1-\frac{x}{K_j}} - \dfrac{\mu_F + d_{ij}}{d_{ij}} x$. One has $f_{ij} = 0$ if and only if 
		\[
		\frac{\mu_F + d_{ij}}{K_j} x^2 + \left( \frac{\mu_F + d_{ij} + d_{ji}}{\mathcal{N}} - \mu_F - d_{ij} \right) x = 0.
		\]
		We deduce that $f_{ij} = 0$ at $0$ and $K_j^+$ as in \eqref{eqn:Kf}, and $K_j^+ > 0$ if and only if $\mathcal{N} > \dfrac{\mu_F + d_{ij} + d_{ji}}{\mu_F + d_{ij}}$. Moreover, $f_{ij} < 0$ on $ (0,K_j^+)$, $f_{ij} > 0$ and is increasing on $(K_j^+,K_j)$. It is defined and convex on $(0,K_j) \subset \mathbb{R}$ since 
		\[
		f_{ij}''(x) = 2\frac{\mu_F + d_{ij} + d_{ji}}{d_{ij}} \dfrac{1}{K_j\mathcal{N}} \frac{1}{\left(1 - \frac{x}{K_j}\right)^3} > 0,
		\]
		for any $x \in (0,K_j)$. We also have $f_{ij}(0) = 0$, $\lim\limits_{x \rightarrow K_j}f_{ij}(x) = +\infty$. 
	\end{proof}

	\begin{figure}
		\begin{subfigure}{0.45 \textwidth}
			\centering
			\includegraphics[width = \textwidth]{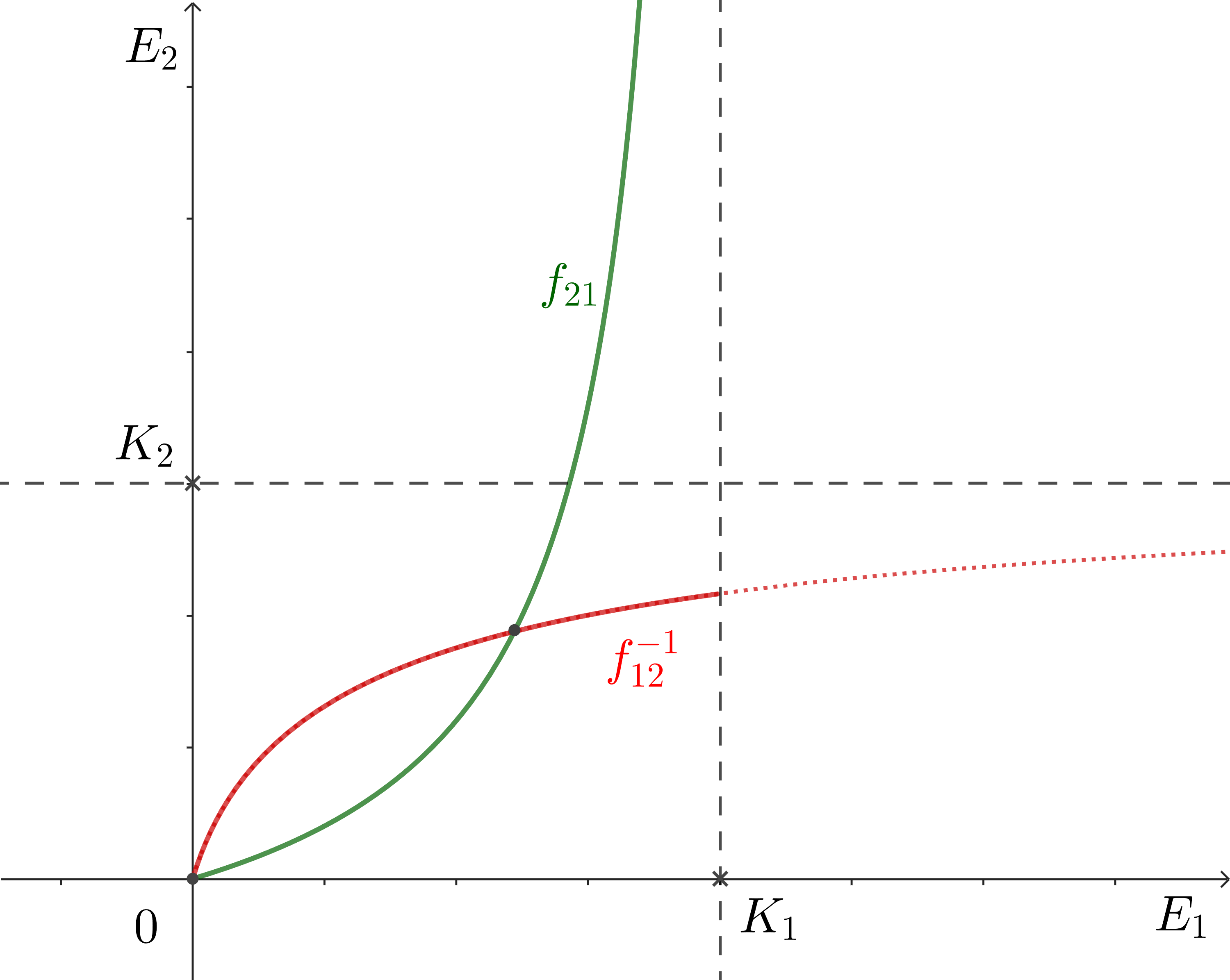}
			\subcaption{$1 < \mathcal{N} \leq \displaystyle \min_{\substack{i,j \in \{1, 2\}, i \neq j}} \dfrac{\mu_F + d_{ij} + d_{ji}}{\mu_F + d_{ij}}$}
		\end{subfigure}
		\hfill 
		\begin{subfigure}{0.45 \textwidth}
			\centering
			\includegraphics[width = \textwidth]{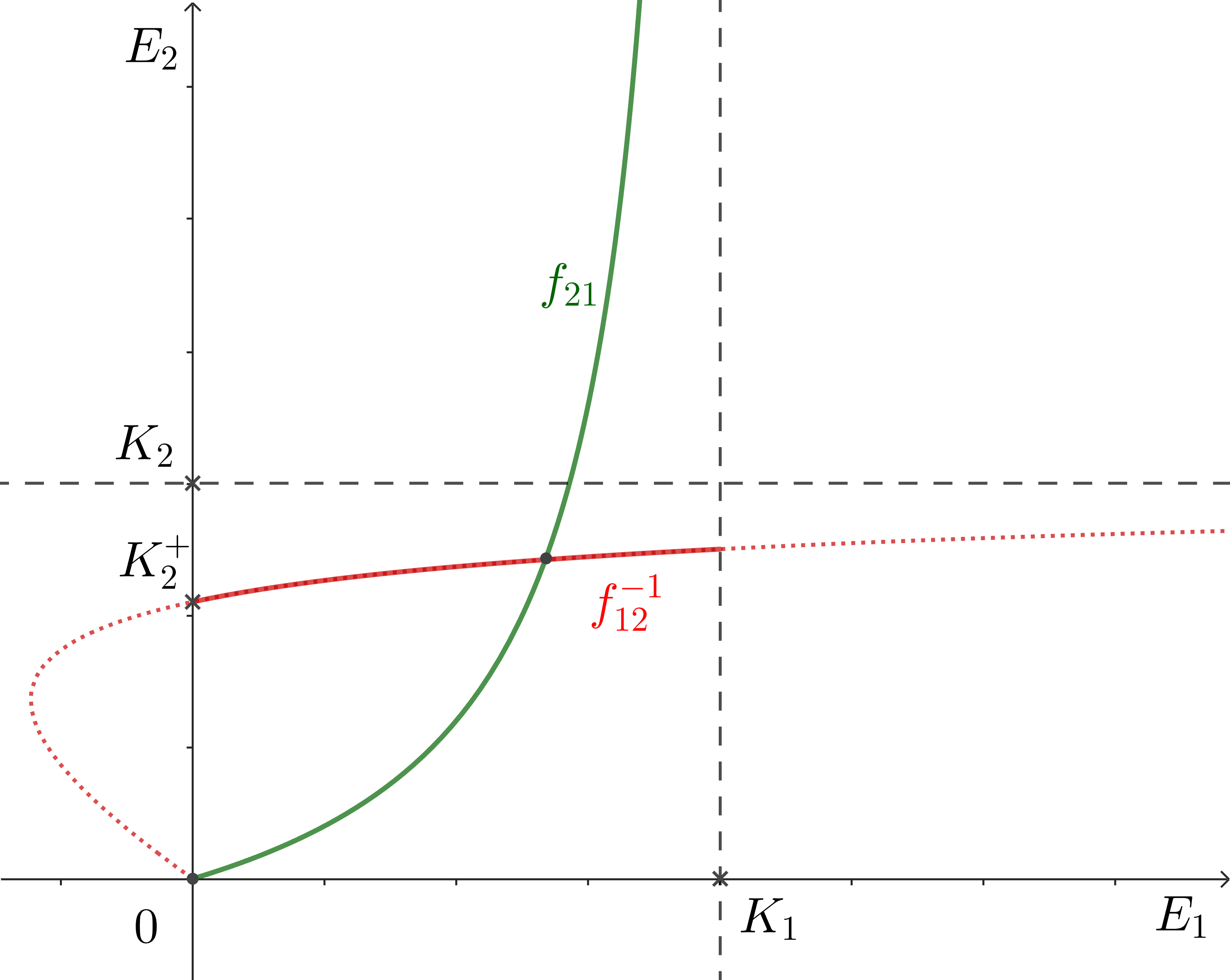}
			\subcaption{$\mathcal{N} > \dfrac{\mu_F + d_{12} + d_{21}}{\mu_F + d_{12}}$ with $d_{12} > d_{21}$}
		\end{subfigure}
		\caption{Relations $E_2^+ = f_{21}(E_1^+)$ (in green) and $E_1^+ = f_{12}(E_2^+)$ (in red) presented in the $E_1 - E_2$ plane when $\mathcal{N} > 1$. In case (a), function $f_{12}$ and $f_{21}$ are increasing and have a unique root zero, so $f_{12}$ is invertible on $[0,K_2)$ and the inverse function $f_{12}^{-1}: [0,K_1] \rightarrow [0,K_2)$ is plotted in the solid red line. In case (b), function $f_{12}$ has another positive root $K_2^+$ and is invertible on $[K_2^+,K_2)$. The inverse function $f_{12}^{-1}: [0,K_1] \rightarrow [K_2^+,K_2)$ is plotted in the solid red line.}
		\label{fig:fij}
	\end{figure}
	
	\begin{proof}[Proof of Theorem \ref{thm:equi1}]
		\textbf{Existence and uniqueness of the positive equilibrium.} 
		System (\ref{eqn:natural}) has a positive equilibrium iff system (\ref{eqn:f21})-(\ref{eqn:f12}) has a solution $(E_1^+, E_2^+)$ in $(0,K_1) \times (0,K_2)$. 
		
		\textbf{Case 1:} $0 < \mathcal{N} \leq \displaystyle \min_{\substack{i,j \in \{1, 2\} \\i \neq j}}\dfrac{\mu_F + d_{ij} + d_{ji}}{\mu_F + d_{ij}}$.
		
		According to Lemma \ref{lem:fij}, we have $f_{12}: [0, K_2) \rightarrow [0, +\infty)$ is positive and increasing, so this function is invertible (see Figure \ref{fig:fij}). We denote $f_{12}^{-1}: [0,K_1] \rightarrow [0, K_2)$ the restriction of the inverse function of $f_{12}$ on $[0,K_1]$, then 
		\[
		E_2^+ = f_{21}(E_1^+) = f_{12}^{-1}(E_1^+).
		\]
		Thus, $E_1^+$ is a positive root of function $f_{21} - f_{12}^{-1}$. For any $x \in (0,K_1)$, one has 
		\[
		(f_{21} - f_{12}^{-1})'(x) = f_{21}'(x) - \frac{1}{f_{12}'(f_{12}^{-1}(x))}, 
		\]
		then
		\[
		(f_{21} - f_{12}^{-1})''(x) = f_{21}''(x) + \frac{f_{12}''(f_{12}^{-1}(x))}{(f_{12}'(f_{12}^{-1}(x)))^3} > 0
		\]
		since $f_{ij}$ is convex on $(0,K_j)$. Hence, $f_{21} - f_{12}^{-1}$ is convex on $(0,K_1)$. Moreover,  we have $(f_{21} - f_{12}^{-1})(0) = 0$, and $\displaystyle \lim_{x \rightarrow K_1}(f_{21} - f_{12}^{-1})(x) = +\infty$. Therefore, this function has a unique positive root if and only if the derivative at zero is negative. We have
		\[
		(f_{21} - f_{12}^{-1})'(0) = \frac{\frac{1}{\mathcal{N}}(\mu_F + d_{12} + d_{21}) - \mu_F - d_{21}}{d_{21}} - \frac{d_{12}}{\frac{1}{\mathcal{N}}(\mu_F + d_{12} + d_{21}) - \mu_F - d_{12}}.
		\] 
		Since $\mathcal{N} \leq \displaystyle \min_{\substack{i,j \in \{1, 2\} \\i \neq j}}\dfrac{\mu_F + d_{ij} + d_{ji}}{\mu_F + d_{ij}} < \dfrac{\mu_F + d_{12}+ d_{21}}{\mu_F}$, then $(f_{21} - f_{12}^{-1})'(0) < 0$ if and only if $\mathcal{N} > 1$. 
		
		\textbf{Case 2:} $\mathcal{N} > \displaystyle \min_{\substack{i,j \in \{1, 2\} \\i \neq j}}\dfrac{\mu_F + d_{ij} + d_{ji}}{\mu_F + d_{ij}}$.
		
		Without loss of generality, we assume that $d_{12} > d_{21}$, then $\dfrac{\mu_F + d_{12} + d_{21}}{\mu_F + d_{12}} = \displaystyle \min_{\substack{i,j \in \{1, 2\} \\i \neq j}}\dfrac{\mu_F + d_{ij} + d_{ji}}{\mu_F + d_{ij}}$. 
		
		If $\mathcal{ N} > \dfrac{\mu_F + d_{12} + d_{21}}{\mu_F + d_{12}} > 1$, again according to Lemma \ref{lem:fij}, function $f_{12}$ has a unique positive root $K_2^+$ and is invertible on $[K_2^+, K_2]$ (see Figure \ref{fig:fij}(b)). We denote again $f_{12}^{-1}: [0, K_1] \rightarrow [K_2^+, K_2)$ the restriction of the  inverse function of $f_{12}$ on $[0,K_1]$, then we also have $f_{21} - f_{12}^{-1}$ convex on $(0,K_1)$, and $(f_{21} - f_{12}^{-1})(0) = -f_{12}^{-1}(0) = -K_2^+ < 0$, $\displaystyle \lim_{x \rightarrow K_1} (f_{21} - f_{12}^{-1})(x) = +\infty > 0$. We can deduce that $f_{21} - f_{12}^{-1}$ has a unique positive root on $(0,K_1)$. 
		
		\medskip \noindent {\bf Instability of the zero equilibrium. } At the origin $\mathbf{0}_6 = (0,0,0,0,0,0)$ of $\mathbb{R}^6$, the Jacobian matrix of system (\ref{eqn:natural}) is 
		\[J(\mathbf{0}) = \begin{pmatrix}
			-\nu_E - \mu_E & b & 0 & 0 & 0 & 0 \\
			r\nu_E & -\mu_F -d_{12} & 0 & 0 & d_{21} & 0 \\
			(1-r)\nu_E & 0 & -\mu_M - \beta d_{12} & 0 & 0 & \beta d_{21} \\
			0 & 0 & 0 & -\nu_E - \mu_E & b & 0 \\
			0 & d_{12} & 0 & r\nu_E & -\mu_F -d_{21} & 0 \\
			0 & 0 & \beta d_{12} & (1-r)\nu_E & 0 & -\mu_F - \beta d_{21}
		\end{pmatrix},\]
		with the characteristic polynomial 
		\[
		\det(J(0) - \lambda I) = \left[(\lambda + \mu_F)(\lambda + \mu_M) + \beta(d_{12} + d_{21} + d_{12}\mu_F + d_{21}\mu_M) \right]\left[(\lambda +\nu_E + \mu_E)(\lambda + \mu_F) - br\nu_E\right]
		\]
		\[
		\times \left[(\lambda +\nu_E + \mu_E)(\lambda + \mu_F + d_{12} + d_{21}) - br\nu_E\right]. 
		\]
		Since $\mathcal{N} > 1$, we have $\mu_F(\nu_E + \mu_E) - br\nu_E < 0$. Thus, we can deduce that the factor $(\lambda +\nu_E + \mu_E)(\lambda + \mu_F) - br\nu_E = \lambda^2 + \lambda(\nu_E + \mu_E + \mu_F) + \mu_F(\nu_E + \mu_E) - br\nu_E$ has one positive root $\lambda > 0$. Hence, the Jacobian at zero has at least one positive eigenvalue so the zero equilibrium is unstable. 
		
		\medskip \noindent {\bf Stability of the positive equilibrium. } First, we can see that the system (\ref{eqn:E1n})-(\ref{eqn:F1n}), (\ref{eqn:E2n})-(\ref{eqn:F2n}) of $(E_1, F_1, E_2, F_2)$ does not depend on $M_1, \ M_2$, and it is cooperative and irreducible. By applying Theorem 1.1 in Chapter 4 of \cite{SMI}, one deduces that this system is strongly monotone. When $\mathcal{N} > 1$, this system admits exactly two equilibria: $\mathbf{0}_4 = (0, 0, 0, 0)$, and $(E_1^+, F_1^+, E_2^+, F_2^+)$. But the zero equilibrium is unstable, so by Theorem 2.2 in Chapter 2 of \cite{SMI}, if the initial data satisfies that $\mathbf{0}_4 < (E_1^0, F_1^0, E_2^0, F_2^0) \leq (E_1^+, F_1^+, E_2^+, F_2^+)$, the solution $(E_1, F_1, E_2, F_2)$ converges to $(E_1^+, F_1^+, E_2^+, F_2^+)$ when $t \rightarrow +\infty$. 
		
		Now if the initial data satisfies that $(E_1^0, F_1^0, E_2^0, F_2^0) > (E_1^+, F_1^+, E_2^+, F_2^+)$, then there exists a constant $\lambda > 1$ large enough such that $\lambda (E_1^+, F_1^+, E_2^+, F_2^+) \geq (E_1^0, F_1^0, E_2^0, F_2^0)$. Since $1 - \dfrac{\lambda  E_i^+}{K_i} < 1 - \dfrac{E_i^+}{K_i}$, one has 
		\[
		b\lambda F_i^+\left( 1 - \frac{\lambda E_i^+}{K_i}\right) - (\nu_E + \mu_E) \lambda E_i^+ < \lambda \left[ bF_i^+\left( 1 - \frac{E_i^+}{K_i}\right) - (\nu_E + \mu_E) E_i^+ \right] = 0, 
		\]
		
		\noindent and the right-hand side of system (\ref{eqn:E1n})-(\ref{eqn:F1n}), (\ref{eqn:E2n})-(\ref{eqn:F2n}) at $\lambda (E_1^+, F_1^+, E_2^+, F_2^+) $ is non positive. Thus, the trajectory resulting from the initial data $\lambda (E_1^+, F_1^+, E_2^+, F_2^+)$ is non-increasing, and therefore converges to $(E_1^+, F_1^+, E_2^+, F_2^+)$. By applying the Lemma \ref{lem:chaplygin} to system (\ref{eqn:E1n})-(\ref{eqn:F1n}), (\ref{eqn:E2n})-(\ref{eqn:F2n}), we deduce that the trajectory resulting from the initial data $(E_1^0, F_1^0, E_2^0, F_2^0)$ lies between $(E_1^+, F_1^+, E_2^+, F_2^+)$ and the trajectories resulting from $\lambda(E_1^+, F_1^+, E_2^+, F_2^+)$. Hence, it also converges to $(E_1^+, F_1^+, E_2^+, F_2^+)$ when time $t$ goes to infinity. 
		
		Moreover, since the trajectories issued from the initial data above and below $(E_1^+, F_1^+, E_2^+, F_2^+)$ all converge to the same limit, then by the comparison principle, we deduce that the trajectory resulting from any positive initial data with values between these initial values converges to this equilibrium.

		Secondly, if we denote matrix $A = \begin{pmatrix}
			-\mu_M - \beta d_{12} & \beta d_{21} \\
			\beta d_{12} & -\mu_M - \beta d_{21}
		\end{pmatrix}$, this matrix is Hurwitz. Functions $M_1, \ M_2$ satisfy $\begin{pmatrix}
			\dot{M_1} \\ \dot{M_2}
		\end{pmatrix} = A \begin{pmatrix}
			M_1 \\ M_2
		\end{pmatrix} + (1-r)\nu_E \begin{pmatrix}
			E_1 \\ E_2
		\end{pmatrix}$. Thus, for any $t > 0$, 
		\begin{equation}
			\label{eqn:Mt}
			\begin{pmatrix}
				M_1(t) \\ M_2(t)
			\end{pmatrix} = e^{tA} \begin{pmatrix}
				M_1^0 \\ M_2^0 
			\end{pmatrix} + (1-r)\nu_E \displaystyle \int_0^t e^{(t-s)A} \begin{pmatrix}
				E_1(s) \\ E_2(s) 
			\end{pmatrix}ds.
		\end{equation}
		Moreover, the equilibrium satisfies
		\begin{equation}
			\label{eqn:M*}
			\begin{pmatrix}
				M_1^+ \\ M_2^+ 
			\end{pmatrix} = -(1-r)\nu_E A^{-1} \begin{pmatrix}
				E_1^+ \\ E_2^+
			\end{pmatrix}.
		\end{equation}
		Hence, from \eqref{eqn:M*} and \eqref{eqn:Mt}, we deduce that
		\begin{equation}
			\begin{pmatrix}
				M_1(t) \\ M_2(t)
			\end{pmatrix} - \begin{pmatrix}
				M_1^+ \\ M_2^+ 
			\end{pmatrix} = e^{tA} \begin{pmatrix}
				M_1^0 \\ M_2^0 
			\end{pmatrix} + (1-r)\nu_E \left[ \displaystyle \int_0^t e^{(t-s)A} \begin{pmatrix}
				E_1(s) \\ E_2(s) 
			\end{pmatrix}ds + A^{-1} \begin{pmatrix}
				E_1^+ \\ E_2^+
			\end{pmatrix}\right]. 
			\label{eqn:subtraction}
		\end{equation}
		
		Moreover, when $t \rightarrow +\infty$, we have that $\begin{pmatrix}
			E_1(t) \\ E_2(t) 
		\end{pmatrix}$ converges to $\begin{pmatrix}
			E_1^+ \\ E_2^+ 
		\end{pmatrix}$ and $e^{tA} \rightarrow 0$ since $A$ is Hurwitz. Thus, for any $\varepsilon > 0$, there exists a time $T_\varepsilon > 0$ large enough such that for any $t > T_\varepsilon$, 
		\begin{equation}
			\label{eqn:eps}
			E_i^+ - \varepsilon < E_i(t) < E_i^+ + \varepsilon, \qquad i = 1, 2,
		\end{equation}
		and $\lVert e^{tA}\rVert < \lVert e^{T_\varepsilon A} \rVert \leq  \varepsilon$.
		
		Since matrix $A$ is Metzler and irreducible, then by applying Lemma \ref{lem:metzler}, one has that $e^{At}$ is strictly positive for any  $t > 0$. Moreover, one has $E_i \in (0, K_i)$ in $(0, +\infty)$, then for any $t > 2T_\varepsilon$,
		\[
		\displaystyle 0 < \int_0^{T_\varepsilon} e^{(t-s)A} \begin{pmatrix}
			E_1(s) \\ E_2(s) 
		\end{pmatrix}ds < \int_0^{T_\varepsilon} e^{(t-s)A} ds \begin{pmatrix}
			K_1 \\ K_2 
		\end{pmatrix},
		\]
		then
		\[
		\displaystyle \norm{\int_0^{T_\varepsilon} e^{(t-s)A} \begin{pmatrix}
				E_1(s) \\ E_2(s) 
			\end{pmatrix}ds } < \norm{\int_0^{T_\varepsilon} e^{(t-s)A} ds} \norm{\begin{pmatrix}
				K_1 \\ K_2 
		\end{pmatrix}}  = \norm{A^{-1} \left(e^{tA} - e^{(t- T_\varepsilon) A}\right)} \norm{\begin{pmatrix}
				K_1 \\ K_2 
		\end{pmatrix}}  < \varepsilon C_1,
		\]
		with some positive constant $C_1$ not depending on $\varepsilon$. Using the second inequality in (\ref{eqn:eps}), one has 
		\[
		\displaystyle  \int_{T_\varepsilon}^t e^{(t-s)A} \begin{pmatrix}
			E_1(s) \\ E_2(s) 
		\end{pmatrix}ds \leq A^{-1} (e^{(t- T_\varepsilon)A} - I) \begin{pmatrix}
			E_1^+ + \varepsilon \\ E_2^+ + \varepsilon 
		\end{pmatrix}  = A^{-1} e^{(t- T_\varepsilon)A} \begin{pmatrix}
			E_1^+ + \varepsilon \\ E_2^+ + \varepsilon 
		\end{pmatrix} - \varepsilon A^{-1} -A^{-1} \begin{pmatrix}
			E_1^+ \\ E_2^+ 
		\end{pmatrix} . 
		\]
		Proving similarly for the other inequality, we can deduce that 
		\[
		\displaystyle\norm{\int_{T_\varepsilon}^t e^{(t-s)A} \begin{pmatrix}
				E_1(s) \\ E_2(s) 
			\end{pmatrix}ds  +  A^{-1} \begin{pmatrix}
				E_1^+ \\ E_2^+ 
		\end{pmatrix}} \leq \varepsilon C_2,
		\]
		with some positive constant $C_2$ not depending on $\varepsilon$. Hence, from \eqref{eqn:subtraction}, we deduce that for any $t > 2T_\varepsilon$, 
		\[
		\displaystyle \norm{ \begin{pmatrix}
				M_1(t) \\ M_2(t) 
			\end{pmatrix}  - \begin{pmatrix}
				M_1^+ \\ M_2^+ 
		\end{pmatrix}} <  \varepsilon \left[ \norm{\begin{pmatrix}
				M_1^0 \\ M_2^0 
		\end{pmatrix}} + C_1 + C_2\right]. 
		\]
		Therefore, $(M_1(t), M_2(t))$ converges to $(M_1^+, M_2^+)$ when $t$ tends to $+\infty$. 
	\end{proof}
	
	In the following section, by considering the releases of sterile males, we look for a condition of release functions $\Lambda$ such that the positive equilibrium disappears. 
	\section{Elimination with releases of sterile males \label{sec:elimination}}
	In this section, we consider $\Lambda(t)$ in system \eqref{eqn:main2patch} the number of sterile males released per time unit, and our goal is to adjust its values such that the wild population reaches elimination. We consider two release strategies as follows
	
	\medskip \noindent \textbf{Constant release:} Let the release function $\Lambda(t) \equiv \Lambda > 0$. As time goes to infinity, the density of sterile males $(M_1^{s},M_2^{s})$ converges to $(M_1^{s*},M_2^{s*})$ that is the solution of system 
	\begin{equation}
		\label{eqn:Msbound}
		\begin{cases}
			\Lambda - \mu_s M_1^{s*} - \alpha d_{12} M_1^{s*} + \alpha d_{21} M_2^{s*} = 0 \\
			- \mu_s M_2^{s*} - \alpha d_{21} M_2^{s*} + \alpha d_{12} M_1^{s*} = 0
		\end{cases}
	\end{equation}
	By denoting 
	\begin{equation}
		\label{eqn:ki}
		\tau_1 := \dfrac{(\mu_s + \alpha d_{21})}{\mu_s(\mu_s + \alpha d_{12} + \alpha d_{21})}, \quad \tau_2 := \dfrac{\alpha d_{12}}{\mu_s(\mu_s + \alpha d_{12} + \alpha d_{21})}, 
	\end{equation}
	we have $M_1^{s*}  = \tau_1 \Lambda$, and $M_2^{s*} =  \tau_2 \Lambda $ and  $M_1^{s*} + M_2^{s*} = \dfrac{\Lambda}{\mu_s}$. 
	
	\medskip \noindent \textbf{Impulsive periodic releases:}  Consider the release function 
	\begin{equation}
		\label{eqn:Lambdaper}
		\Lambda(t) = \displaystyle \sum_{k = 0}^{+\infty} \tau \Lambda^\mathrm{per}_k \delta_{k \tau},
	\end{equation}
	with period $\tau > 0$ and $\Lambda^\mathrm{per}_k$ is the average number of sterile males released per time unit during the time interval $(k\tau, (k+1)\tau)$ for $k = 0, 1, \dots$. We choose in this work $\Lambda^\mathrm{per}_k$ constant and drop consequently the sub-index $k$. The release function $\Lambda(t)$ in \eqref{eqn:Lambdaper} means that we release a total amount of $\tau \Lambda^\mathrm{per}$ mosquitoes at the beginning of each time period ($t = k\tau$).
	
	Denote vector $X(t) = \begin{pmatrix} M_1^s(t) \\ M_2^s(t) \end{pmatrix}$, then with $k = 0, 1, \dots$, the density of sterile males satisfies the following system
	\begin{subequations}
		\label{eqn:periodic}
		\begin{align}
			X'(t) & = A_s X(t) \qquad \text{for any } t \in \displaystyle \bigcup_{k= 0}^{\infty} \left(k\tau, (k+1)\tau\right), \label{eqn:per1}\\
			X(k\tau^+) & = X(k\tau^-) + \begin{pmatrix}
				\tau \Lambda^\mathrm{per} \\ 0
			\end{pmatrix}, \label{eqn:per2}
		\end{align}
	\end{subequations}
	with matrix $A_s = \begin{pmatrix}
		-\alpha d_{12} - \mu_s & \alpha d_{21} \\
		\alpha d_{12} & -\alpha d_{21} - \mu_s
	\end{pmatrix}$, and $X(k\tau^\pm)$ denote the right and left limits of $X(t)$ at time $k\tau$ and by convention, we set $X(0^-) = 0$. 
	
	The following lemma shows that $X(t)$ converges to a periodic solution as time evolves and this periodic solution is discontinuous and bounded from below by a positive value. 
	\begin{lemma}
		\label{lem:periodic}
		The solution $X(t)$ of system \eqref{eqn:periodic} converges to a periodic solution $X^\mathrm{per}(t)$ that satisfies
			\begin{equation}
			\label{eqn:Xper}
			X^\mathrm{per}(k\tau^+) = (I - e^{A_s\tau})^{-1} \begin{pmatrix}
			\tau \Lambda^\mathrm{per} \\ 0
			\end{pmatrix}, \qquad X^\mathrm{per}(t) = e^{A_st} X^\mathrm{per}(k\tau^+) \quad \text{ for any } t \in \displaystyle \bigcup_{k= 0}^{\infty} \left(k\tau, (k+1)\tau\right).
			\end{equation}
		Moreover, there exist positive constants $\tau_1^\mathrm{per}, \ \tau_2^\mathrm{per}$ which depend on $A_s$ and period $\tau$ such that for any $t > 0$, one has $ X^\mathrm{per}(t) \geq  \Lambda^\mathrm{per} \begin{pmatrix}
			\tau_1^\mathrm{per} \\ \tau_2^\mathrm{per}
		\end{pmatrix}$. 
	\end{lemma}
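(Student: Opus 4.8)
The idea is to treat \eqref{eqn:periodic} as a linear impulsive system and reduce it to a discrete affine recursion through the stroboscopic (time-$\tau$) map. On each open interval $(k\tau,(k+1)\tau)$ the solution of \eqref{eqn:per1} is $X(t)=e^{A_s(t-k\tau)}X(k\tau^+)$, so letting $Y_k:=X(k\tau^+)$ and using the jump condition \eqref{eqn:per2} at $t=(k+1)\tau$ (together with the convention $X(0^-)=0$) we obtain
\[ Y_{k+1}=e^{A_s\tau}Y_k+\begin{pmatrix}\tau\Lambda^\mathrm{per}\\0\end{pmatrix},\qquad Y_0=\begin{pmatrix}\tau\Lambda^\mathrm{per}\\0\end{pmatrix}. \]
First I would check that $A_s$ is Hurwitz: its trace $-2\mu_s-\alpha(d_{12}+d_{21})$ is negative and its determinant $\mu_s^2+\alpha\mu_s(d_{12}+d_{21})$ is positive, so both eigenvalues have negative real part (equivalently, $A_s$ is Metzler with strictly negative column sums $-\mu_s$). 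Consequently $\rho(e^{A_s\tau})<1$, the matrix $I-e^{A_s\tau}$ is invertible, and the affine map $Y\mapsto e^{A_s\tau}Y+(\tau\Lambda^\mathrm{per},0)^\top$ has the unique fixed point $Y^\star:=(I-e^{A_s\tau})^{-1}(\tau\Lambda^\mathrm{per},0)^\top$.

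Subtracting the recursion from the fixed-point identity gives $Y_k-Y^\star=(e^{A_s\tau})^k(Y_0-Y^\star)\to0$ geometrically. Since on each period $X(t)-X^\mathrm{per}(t)=e^{A_s(t-k\tau)}(Y_k-Y^\star)$ and $\|e^{A_s(t-k\tau)}\|$ is bounded for $t-k\tau\in[0,\tau]$, we conclude that $X(t)$ converges to the $\tau$-periodic function $X^\mathrm{per}$ described by \eqref{eqn:Xper}, and that $X^\mathrm{per}(k\tau^+)=Y^\star$. This settles the first part of the statement.

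For the lower bound, note that $A_s$ is Metzler \emph{and} irreducible, since $d_{12},d_{21},\alpha>0$ make both off-diagonal entries strictly positive; hence by Lemma \ref{lem:metzler}, $e^{A_st}\gg0$ for every $t>0$. In particular $e^{A_s\tau}\gg0$, and because $\rho(e^{A_s\tau})<1$ we may write $(I-e^{A_s\tau})^{-1}=\sum_{j\ge0}(e^{A_s\tau})^j$, a sum of non-negative matrices whose $j=1$ term is strictly positive; therefore $(I-e^{A_s\tau})^{-1}\gg0$ and $Y^\star=\Lambda^\mathrm{per}v$ with $v:=\tau(I-e^{A_s\tau})^{-1}e_1\gg0$, where $e_1=(1,0)^\top$. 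Everything scales linearly in $\Lambda^\mathrm{per}$, which is exactly why the constants below will not depend on it. By $\tau$-periodicity it suffices to bound $X^\mathrm{per}$ on one period: for $s\in[0,\tau)$ one has $X^\mathrm{per}(k\tau+s)=e^{A_ss}Y^\star=\Lambda^\mathrm{per}e^{A_ss}v$. The map $s\mapsto e^{A_ss}v$ is continuous on the compact interval $[0,\tau]$ and strictly positive at every point — at $s=0$ it equals $v\gg0$, and for $s>0$ it is the product of $e^{A_ss}\gg0$ with $v\gg0$ — so each component attains a positive minimum there. Setting
\[ \tau_i^\mathrm{per}:=\min_{s\in[0,\tau]}\bigl(e^{A_ss}v\bigr)_i>0,\qquad i=1,2, \]
which depends only on $A_s$ and $\tau$, yields $X^\mathrm{per}(t)\ge\Lambda^\mathrm{per}(\tau_1^\mathrm{per},\tau_2^\mathrm{per})^\top$ for all $t>0$.

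The routine parts are the spectral computation for $A_s$ and the geometric-convergence estimate. The only points that need genuine care are the bookkeeping of the one-sided limits at the impulse times when deriving the recursion, and — for the lower bound — the observation that the uniform-in-$t$ estimate must also cover $s=0$, where $e^{A_ss}=I$ is \emph{not} strictly positive; this works solely because $v$ itself is strictly positive, which is where the irreducibility of $A_s$ and Lemma \ref{lem:metzler} are used.
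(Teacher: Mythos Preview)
Your proof is correct and follows essentially the same route as the paper: reduce to the stroboscopic recursion, use that $A_s$ is Hurwitz to get convergence via the Neumann series $(I-e^{A_s\tau})^{-1}=\sum_{j\ge0}(e^{A_s\tau})^j$, and then use that $A_s$ is Metzler irreducible together with Lemma~\ref{lem:metzler} to obtain strict positivity of this series and of $e^{A_ss}$, finally minimizing over one period by compactness. Your write-up is in fact a bit more careful than the paper's, explicitly handling the $s=0$ endpoint (where $e^{A_ss}=I$ is not strictly positive) via the strict positivity of $v$, and writing $e^{A_s(t-k\tau)}$ rather than $e^{A_st}$ in the within-period formula.
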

	\begin{proof}
		The densities of the sterile males evolve according to \eqref{eqn:per1} on the union of open intervals $(k \tau, (k+1)\tau)$ while $X$ is submitted to jump at each point $k\tau$ as in \eqref{eqn:per2}. For such a release schedule, the solution of system \eqref{eqn:periodic} satisfies
		\[
		X(k\tau^+) = \displaystyle \sum_{i = 0}^{k} e^{iA_s\tau} \begin{pmatrix}
		\tau \Lambda^\mathrm{per} \\ 0
		\end{pmatrix}, \qquad X(t) = e^{A_st} X(k\tau^+) \quad \text{ for any } t \in \displaystyle \bigcup_{k= 0}^{\infty} \left(k\tau, (k+1)\tau\right).  
		\]
		Since matrix $A_s$ is Hurwitz, when $t \rightarrow +\infty$, we have that $X$ converges to the periodic solution $X^\mathrm{per} = \begin{pmatrix}
		M_1^{s, \mathrm{per}} \\ M_2^{s, \mathrm{per}}
		\end{pmatrix}$ that satisfies that \eqref{eqn:Xper} for  $k = 0, 1, \dots$
		
		We have matrix $A_s$ is Metzler and irreducible, then by applying Lemma \ref{lem:metzler}, we deduce that $e^{A_s} \gg 0$. 
		
		On the other hand, matrix $A_s$ is Hurwitz so $(I - e^{A_s\tau})^{-1} = \displaystyle \sum_{i = 0}^{+\infty} e^{i A_s\tau} \gg 0$ for any $\tau > 0$. Moreover, we have $e^{A_st}$ is also strictly positive for any $t > 0$, so
		\[
		\displaystyle \inf_{t > 0} X^\mathrm{per}(t) = \min_{t \in [0, \tau]} X^\mathrm{per}(t) = \min_{t \in [0, \tau]} e^{A_s t}(I - e^{A_s\tau})^{-1} \begin{pmatrix}
			\tau \Lambda^\mathrm{per} \\ 0
		\end{pmatrix}. 
		\]
		By taking  $\begin{pmatrix}
			\tau_1^\mathrm{per} \\ \tau_2^\mathrm{per}
		\end{pmatrix} = \displaystyle \min_{t \in [0, \tau]} e^{A_s t}(I - e^{A_s\tau})^{-1} \begin{pmatrix}
			\tau \\ 0
		\end{pmatrix}$, the result of Lemma \ref{lem:periodic} follows. 
	\end{proof}
	\begin{remark}
		The parameters $\tau_i$ defined in \eqref{eqn:ki} and $\tau_i^\mathrm{per}$ play a similar role to each other: they define a relationship between an average release rate $\Lambda$ per time unit and a (minimum) level of the sterile mosquito density. These parameters depend on the diffusion rates, the death rate of sterile males, and the release period $\tau$ in the periodic case.
	\end{remark}
	
	We provide in the following result a condition on $\Lambda(t)$ for the wild population to reach elimination.
	\begin{theorem}
		\label{thm:main}
		Consider system \eqref{eqn:main2patch} with the release function $\Lambda(t)$. Then 
		\begin{itemize}[leftmargin = 0.5cm]
			\item In the constant release case, for $\Lambda(t) \equiv \Lambda$, there exists a positive number  $\overline{\Lambda}$ satisfying 
			\[ \overline{\Lambda} \leq \displaystyle \max_{i = 1, 2} \dfrac{1}{\gamma \tau_i}(\mathcal{N} - 1)C_M \qquad i = 1, 2,  \]
			with $\tau_i$ defined in \eqref{eqn:ki}, $C_M$ defined in Lemma \ref{lem:bound} such that if $\Lambda > \overline{\Lambda}$, system (\ref{eqn:main2patch}) has a unique equilibrium 
			\[
			\mathbf{u}_0^*= (0,0,0, M_1^{s*}, 0, 0, 0, M_2^{s*}).
			\]
			Moreover, in this case, for any non-negative initial data, the solution of \eqref{eqn:main2patch} converges to this equilibrium when $t \rightarrow +\infty$.
			\item In the periodic release case, for $\Lambda(t)$ defined in \eqref{eqn:periodic}, There exists a positive constant $\overline{\Lambda}^\mathrm{per}$ satisfying 
			\[ \overline{\Lambda}^\mathrm{per} \leq \displaystyle \max_{i = 1, 2} \dfrac{1}{\gamma \tau_i^\mathrm{per}}(\mathcal{N} - 1)C_M,\] 
			with  $\tau_i^\mathrm{per}$ defined in Lemma \ref{lem:periodic},  $C_M$ defined in Lemma \ref{lem:bound}  such that  if  $\Lambda^\mathrm{per} > \overline{\Lambda}^\mathrm{per}$, then for any non-negative initial data, the solution of the initial value problem of system \eqref{eqn:main2patch} converges to the unique steady state
			\[
			\mathbf{u}_0^\mathrm{per} = (0,0,0,M_1^{s,\mathrm{per}},0,0,0,M_2^{s,\mathrm{per}}),
			\]
			as time $t \rightarrow +\infty$ .
		\end{itemize}
	\end{theorem}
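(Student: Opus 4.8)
The plan is to absorb the four male compartments into the a priori bounds furnished by Lemma~\ref{lem:bound}, so that, eventually in time, the quadruple $(E_1,F_1,E_2,F_2)$ becomes a subsolution of a copy of the $(E,F)$-part of system~\eqref{eqn:natural} in which the recruitment coefficient $r\nu_E$ has been lowered. When the release rate is large this lowered coefficient drags the basic offspring number of the reduced system below $1$, and Theorem~\ref{thm:equi1} forces it, hence also the genuine $(E,F)$ part of \eqref{eqn:main2patch}, to zero; the remaining compartments then follow. I will carry out the argument for constant releases and transcribe it for periodic ones, replacing $\tau_i$ by $\tau_i^\mathrm{per}$, $\Lambda$ by $\Lambda^\mathrm{per}$, $\mathbf{u}_0^*$ by $\mathbf{u}_0^\mathrm{per}$, and invoking Lemma~\ref{lem:periodic} in place of the explicit solution of \eqref{eqn:Msbound}.

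First I would fix $\varepsilon>0$ and collect the a priori information. By Lemma~\ref{lem:bound}, $\limsup_{t\to\infty}(M_1+M_2)(t)\le C_M$, so $M_i(t)\le C_M+\varepsilon$ for $i=1,2$ once $t$ is large; moreover $E_i(t)\le K_i$ for $t$ large (at once if $E_i^0<K_i$, after a transient otherwise since $\dot E_i<0$ whenever $E_i>K_i$). The pair $(M_1^s,M_2^s)$ solves an affine linear ODE whose matrix $A_s$ is Metzler, Hurwitz and irreducible, hence it converges to $(M_1^{s*},M_2^{s*})=(\tau_1\Lambda,\tau_2\Lambda)$, so $M_i^s(t)\ge\tau_i\Lambda-\varepsilon>0$ for $t$ large. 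Let $T_0=T_0(\varepsilon)$ lie beyond all these times. Since $(m,s)\mapsto m/(m+\gamma s)$ is nondecreasing in $m$ and nonincreasing in $s$, I obtain for $t\ge T_0$
\[
r\nu_E E_i\,\frac{M_i}{M_i+\gamma M_i^s}\ \le\ \theta E_i,\qquad
\theta:=\max_{i=1,2} r\nu_E\,\frac{C_M+\varepsilon}{C_M+\varepsilon+\gamma(\tau_i\Lambda-\varepsilon)},
\]
so $(E_1,F_1,E_2,F_2)$ satisfies $\dot{\mathbf{v}}\le\bar{\mathbf{f}}(\mathbf{v})$ on $[T_0,\infty)$, where $\bar{\mathbf{f}}$ is the field of the $(E,F)$-subsystem of \eqref{eqn:natural} with $r\nu_E$ replaced by $\theta$. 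This reduced system is cooperative on $\{0\le E_i\le K_i\}$, so Lemma~\ref{lem:chaplygin}, applied with $\bar{\mathbf{u}}(T_0)=\mathbf{v}(T_0)$, gives $0\le(E_i,F_i)(t)\le(\bar E_i,\bar F_i)(t)$ for $t\ge T_0$ (in the periodic case the differential inequality fails only on the null set of release instants, where $E_i,F_i$ remain continuous, which does not affect the comparison).

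Next I would use that the reduced system has basic offspring number $\widetilde{\mathcal N}=b\theta/(\mu_F(\mu_E+\nu_E))$, and that $\widetilde{\mathcal N}\le1$ is equivalent to $\Lambda\ge\big[(\mathcal N-1)(C_M+\varepsilon)+\gamma\varepsilon\big]/(\gamma\min_i\tau_i)$, whose right-hand side decreases to $\overline{\Lambda}:=\max_{i=1,2}\frac{1}{\gamma\tau_i}(\mathcal N-1)C_M$ as $\varepsilon\to0^+$. Hence, given any $\Lambda>\overline{\Lambda}$, I would pick $\varepsilon$ small enough that $\widetilde{\mathcal N}\le1$; then Theorem~\ref{thm:equi1}, applied to the reduced system, yields $(\bar E_i,\bar F_i)(t)\to0$, so $(E_i,F_i)(t)\to0$, and feeding this into the $M$-equations of \eqref{eqn:main2patch} — whose homogeneous part is Hurwitz — gives $(M_1,M_2)(t)\to0$ by the variation-of-constants estimate already used in the proof of Theorem~\ref{thm:equi1}, while $(M_1^s,M_2^s)(t)\to(M_1^{s*},M_2^{s*})$ was observed above. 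Thus every nonnegative trajectory converges to $\mathbf{u}_0^*$; since an equilibrium is a constant trajectory and must coincide with its own limit, $\mathbf{u}_0^*$ is the unique equilibrium of \eqref{eqn:main2patch}. This settles the constant case with this $\overline{\Lambda}$, and the periodic case is identical with $\overline{\Lambda}^\mathrm{per}:=\max_{i=1,2}\frac{1}{\gamma\tau_i^\mathrm{per}}(\mathcal N-1)C_M$.

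The step I expect to be the real obstacle is the comparison estimate, precisely its sharpness: the lazy bound $M_i/(M_i+\gamma M_i^s)\le M_i/(\gamma M_i^s)$ would only produce the threshold $\mathcal N C_M/(\gamma\tau_i)$, too large by the factor $\mathcal N/(\mathcal N-1)$, so one must retain $M_i$ in the denominator to land exactly on the constant $(\mathcal N-1)$. Two further points will need care: the comparison only switches on at the finite time $T_0=T_0(\varepsilon)$, so $\varepsilon$ has to be chosen after $\Lambda$, using the strict inequality $\Lambda>\overline{\Lambda}$; and Theorem~\ref{thm:equi1} is applied to the modified rather than the original system~\eqref{eqn:natural}, which is legitimate since its proof nowhere uses the particular value of the recruitment coefficient, only its positivity and the induced basic offspring number.
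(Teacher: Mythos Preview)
Your proposal is correct and follows essentially the same approach as the paper: bound the fraction $M_i/(M_i+\gamma M_i^s)$ by a constant no greater than $1/\mathcal{N}$ using the eventual upper bound $C_M$ on $M_i$ and the eventual lower bound $\tau_i\Lambda$ (resp.\ $\tau_i^{\mathrm{per}}\Lambda^{\mathrm{per}}$) on $M_i^s$, then compare the $(E,F)$ block with a cooperative system whose basic offspring number is at most $1$, and finally pull $M_i$ to zero via the Hurwitz linear part. The paper packages the first step as a separate Proposition (replacing the fraction directly by $1/\mathcal{N}$ and analyzing the resulting system's equilibria), whereas you invoke Theorem~\ref{thm:equi1} for the reduced system with lowered recruitment coefficient $\theta$; this is legitimate since, as you note, that proof only uses positivity of the coefficient and the value of the induced offspring number. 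Your handling of the $\varepsilon$--$T_0$ transient and of the strict inequality $\Lambda>\overline{\Lambda}$ is in fact more explicit than the paper's, and your remark that keeping $M_i$ in the denominator is what produces the factor $(\mathcal{N}-1)$ rather than $\mathcal{N}$ is exactly the point.
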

	This result shows that with a sufficiently large number of sterile males released in the first zone, we can succeed in driving the wild population in both areas to elimination. In the following, we describe the principle idea to prove this result.

	\subsection{Principle of the method}
	\label{subsec:principle}
	To provide conditions for the release $\Lambda$ to stabilize the zero equilibrium, our strategy is as follows: 
	
	Step 1:  We consider $\rho_i = \displaystyle \sup_{t > 0} \dfrac{M_i(t)}{M_i(t) + \gamma M_i^s(t)}$, for $i = 1, \ 2$, in system \eqref{eqn:main2patch} to be smaller than some level, then we study the system with the fractions replaced by some constant.  
	
	Step 2: We show how to realize, through an adequate choice of $\Lambda$, the above behavior of $M_i^s$. 
	\subsubsection{Step 1: Setting the sterile population level directly}
	Theorem \ref{thm:equi1} shows us that when the basic offspring number is smaller than 1, the zero equilibrium is globally asymptotically stable. For the controlled system, the basic offspring numbers are smaller than $\rho_i \mathcal{N}$. It suggests that for stabilizing the origin of system \eqref{eqn:main2patch}, it is sufficient to ensure $\rho_i \mathcal{N} \leq 1$.
	\begin{proposition}
		\label{prop:rho}
		If the trajectory resulting from any positive initial data of system \eqref{eqn:main2patch} satisfies that for $\mathcal{N}$ defined in \eqref{eqn:bon},
		\begin{equation}
			\label{eqn:rho}
			\dfrac{M_i(t)}{M_i(t) + \gamma M_i^s(t)} \leq \frac{1}{\mathcal{N}}, \qquad t \geq 0, \ i = 1, 2. 
		\end{equation}
		then $\mathbf{u}' = (E_1, F_1, M_1, E_2, F_2, M_2)$ converges to $\mathbf{0}_6$ as time $t \rightarrow +\infty$. 
	\end{proposition}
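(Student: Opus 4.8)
The plan is to use the monotonicity of system \eqref{eqn:main2patch} established in Lemma \ref{lem:monotonicity} to compare the trajectory $\mathbf{u}'=(E_1,F_1,M_1,E_2,F_2,M_2)$ against the solution of the uncoupled-in-$M^s$ system in which each mating fraction $\dfrac{M_i}{M_i+\gamma M_i^s}$ is replaced by the upper bound $\dfrac{1}{\mathcal{N}}$. Concretely, under hypothesis \eqref{eqn:rho} the first terms in \eqref{eqn:F1} and \eqref{eqn:F2} satisfy $r\nu_E E_i \dfrac{M_i}{M_i+\gamma M_i^s}\le \dfrac{r\nu_E}{\mathcal{N}}E_i$, so the six components $(E_1,F_1,M_1,E_2,F_2,M_2)$ of the true trajectory form a subsolution (in the sense $\dot{\mathbf v}\preceq \mathbf f(\mathbf v)$, restricted to these coordinates, which here is just the ordinary componentwise order since $M^s$ does not appear) of the comparison system
\begin{subequations}
\label{eqn:compsys}
\begin{align}
\dot{\widehat E_1} &= b\widehat F_1\Big(1-\frac{\widehat E_1}{K_1}\Big)-(\nu_E+\mu_E)\widehat E_1,\\
\dot{\widehat F_1} &= \frac{r\nu_E}{\mathcal{N}}\,\widehat E_1-\mu_F\widehat F_1-d_{12}\widehat F_1+d_{21}\widehat F_2,\\
\dot{\widehat M_1} &= (1-r)\nu_E\widehat E_1-\mu_M\widehat M_1-\beta d_{12}\widehat M_1+\beta d_{21}\widehat M_2,\\
\dot{\widehat E_2} &= b\widehat F_2\Big(1-\frac{\widehat E_2}{K_2}\Big)-(\nu_E+\mu_E)\widehat E_2,\\
\dot{\widehat F_2} &= \frac{r\nu_E}{\mathcal{N}}\,\widehat E_2-\mu_F\widehat F_2-d_{21}\widehat F_2+d_{12}\widehat F_1,\\
\dot{\widehat M_2} &= (1-r)\nu_E\widehat E_2-\mu_M\widehat M_2-\beta d_{21}\widehat M_2+\beta d_{12}\widehat M_1,
\end{align}
\end{subequations}
started from the same (or larger) initial data. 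This system has exactly the structure of \eqref{eqn:natural} but with the birth rate $b$ effectively scaled by $1/\mathcal{N}$, hence its basic offspring number is $\mathcal{N}\cdot(1/\mathcal{N})=1$.

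Next I would invoke Theorem \ref{thm:equi1} applied to \eqref{eqn:compsys}: since the associated basic offspring number equals $1\le 1$, zero is its unique equilibrium and every nonnegative trajectory of \eqref{eqn:compsys} converges to $\mathbf{0}_6$ as $t\to+\infty$. To set up the comparison cleanly, choose the initial data of \eqref{eqn:compsys} to be $\widehat{\mathbf u}(0)=\mathbf u'(0)$ (componentwise), which is legitimate because \eqref{eqn:rho} makes $\mathbf u'$ a genuine subsolution of \eqref{eqn:compsys} on the invariant region $\{0\le E_i\le K_i\}$ guaranteed by Lemma \ref{lem:bound}. Lemma \ref{lem:monotonicity} (or directly Lemma \ref{lem:chaplygin}, since for these six coordinates the relevant order is the standard one and the comparison system is cooperative there) then yields $0\le \mathbf u'(t)\le \widehat{\mathbf u}(t)$ for all $t>0$. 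Letting $t\to+\infty$ and using $\widehat{\mathbf u}(t)\to\mathbf 0_6$ forces $\mathbf u'(t)\to\mathbf 0_6$ by squeezing, which is the claim.

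The one point requiring a little care — and the main obstacle — is justifying that the comparison applies despite $\Lambda$ possibly containing impulsive (Dirac) components, and that Theorem \ref{thm:equi1} can be used for \eqref{eqn:compsys} even though its basic offspring number is exactly $1$ rather than strictly less than $1$. For the former, I would note that the six equations governing $\mathbf u'$ do not involve $\Lambda$ at all: $\Lambda$ enters only through $M_1^s,M_2^s$, which affect $\mathbf u'$ solely via the mating fractions, and those are already controlled by hypothesis \eqref{eqn:rho}; thus the impulses cause no difficulty here and the comparison is run piecewise on the smoothness intervals $(t_k,t_{k+1})$ from Lemma \ref{lem:bound}, with continuity of $\mathbf u'$ across the $t_k$ ensuring the inequality propagates. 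For the boundary case $\mathcal{N}_{\text{comp}}=1$, Theorem \ref{thm:equi1} explicitly covers $\mathcal{N}\le 1$, so convergence to zero is available; if one wanted strict contraction one could instead fix any $\varepsilon\in(0,1)$, replace $1/\mathcal{N}$ by $(1-\varepsilon)/\mathcal{N}$ in \eqref{eqn:rho} only to observe that \eqref{eqn:rho} as stated is already the $\varepsilon=0$ limit and pass to the limit, but this is not needed. I would finish by remarking that convergence of $(E_i,F_i,M_i)$ to zero is all that is asserted; the sterile components $M_i^s$ are handled separately in the proof of Theorem \ref{thm:main}.
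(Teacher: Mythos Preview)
Your proposal is correct and follows the same overall strategy as the paper: introduce the comparison system in which each mating fraction $\dfrac{M_i}{M_i+\gamma M_i^s}$ is replaced by $\dfrac{1}{\mathcal{N}}$, bound $\mathbf{u}'$ above by its solution via cooperativity and Lemma~\ref{lem:chaplygin}, and show that this upper solution tends to $\mathbf{0}_6$.

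The only difference lies in how the comparison system is driven to zero. The paper redoes the equilibrium analysis from scratch: it introduces functions $g_{ij}$ analogous to the $f_{ij}$ of Lemma~\ref{lem:fij}, verifies that $g_{21}'(0)=(g_{12}^{-1})'(0)$ so that the curves are tangent at the origin and no positive intersection exists, and then invokes a monotone convergence result from \cite{SMI}. You instead recognise the comparison system as an instance of \eqref{eqn:natural} with effective offspring number equal to $1$ and appeal directly to the $\mathcal{N}\le 1$ case of Theorem~\ref{thm:equi1}. Your route is more economical; the paper's is more self-contained. One small imprecision worth fixing: the comparison system is not literally \eqref{eqn:natural} with $b$ replaced by $b/\mathcal{N}$ (that would alter the $E$-equations, not the $F$-equations); rather it is the coefficient $r\nu_E$ in the $F$-equations that is scaled by $1/\mathcal{N}$, while the $E$- and $M$-equations are unchanged. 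Since the conclusion of Theorem~\ref{thm:equi1} for the $(E,F)$-subsystem depends on the parameters only through the product $\dfrac{br\nu_E}{\mu_F(\mu_E+\nu_E)}$, your computation $\mathcal{N}_{\mathrm{comp}}=1$ and the appeal to Theorem~\ref{thm:equi1} remain valid.
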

	\begin{proof}
		Assume that we can set $M_i^s$ to be large enough such that \eqref{eqn:rho} holds, and we consider the following system 
		\begin{subequations}
			\label{eqn:sysrho}
			\begin{align}
				\dot{E_1} &= bF_1\left( 1 - \frac{E_1}{K_1}\right) - (\nu_E + \mu_E) E_1, \label{eqn:E1rho} \\
				\dot{F_1} &= \dfrac{1}{\mathcal{N}} r\nu_E E_1 - \mu_F F_1 - d_{12} F_1 + d_{21} F_2, \label{eqn:F1rho} \\
				\dot{M_1} & = (1-r)\nu_E E_1 - \mu_M M_1 - \beta d_{12} M_1 + \beta d_{21} M_2, \label{eqn:M1rho} \\
				\dot{E_2} &= bF_2\left( 1 - \frac{E_2}{K_2}\right) - (\nu_E + \mu_E) E_2, \label{eqn:E2rho} \\
				\dot{F_2} &= \dfrac{1}{\mathcal{N}} r\nu_E E_2 - \mu_F F_2 - d_{21} F_2 + d_{12} F_1, \label{eqn:F2rho}  \\
				\dot{M_2} & = (1-r)\nu_E E_2 - \mu_M M_2 - \beta d_{21} M_2 + \beta d_{12} M_1, \label{eqn:M2rho}
			\end{align}
		\end{subequations}
		Denote $\widetilde{\mathbf{u}} = (\widetilde{E_1}, \widetilde{F_1}, \widetilde{M_1}, \widetilde{E_2}, \widetilde{F_2}, \widetilde{M_2})$ solution of system \eqref{eqn:sysrho}. Since system \eqref{eqn:sysrho} is cooperative and the inequality \eqref{eqn:rho} holds, one obtains that $\widetilde{\mathbf{u}}$ is a super-solution of the system \eqref{eqn:E1}-\eqref{eqn:M1}, \eqref{eqn:E2}-\eqref{eqn:M2}, and by applying Lemma \ref{lem:chaplygin}, we have $\widetilde{\mathbf{u}} \geq \mathbf{u}'$. 
		
		Denote $\mathbf{u}^* = (E_1^*, F_1^*, M_1^*, E_2^*, F_2^*, M_2^*)$ a positive equilibrium of system \eqref{eqn:sysrho} if exists. Similar to the previous section, we have 
		\begin{equation}
			E_2^* = \frac{\mu_F + d_{12} + d_{21}}{d_{21}} \frac{E_1^*}{1-\frac{E_1^*}{K_1}} - \frac{\mu_F + d_{21}}{d_{21}} E_1^* =: g_{21}(E_1^*),
			\label{eqn:g21}
		\end{equation}
		\begin{equation}
			E_1^* = \frac{\mu_F + d_{12} + d_{21}}{ d_{12}} \frac{E_2^*}{1-\frac{E_2^*}{K_2}} - \frac{ \mu_F + d_{12}}{ d_{12}} E_2^* =: g_{12}(E_2^*).
			\label{eqn:g12}
		\end{equation}
		The analysis of $g_{ij}$ is analogous to $f_{ij}$ in Lemma \ref{lem:fij}. It is easy to check that $g_{12}$ is increasing on $(0,K_2)$, so it is invertible. Then, $E_1^*$ satisfies $(g_{21}-g_{12}^{-1})(E_1^*) = 0$. Function $g_{21}$ is convex and $g_{12}^{-1}$ is concave in $(0,K_1)$, and 
		\[
		g_{21}'(0) = \frac{\mu_F + d_{12} + d_{21}}{ d_{21}} - \frac{(\mu_F + d_{21})}{ d_{21}} = \frac{d_{12}}{d_{21}}, 
		\]
		\[
		(g_{12}^{-1})'(0) = \frac{1}{g_{12}'(0)} = \frac{1}{\frac{\mu_F + d_{12} + d_{21}}{d_{12}} - \frac{\mu_F + d_{12}}{ d_{12}}} = \frac{d_{12}}{d_{21}}.
		\]
		We obtain that $g'_{21}(0) = (g_{12}^{-1})'(0)$ (see Figure \ref{fig:gij}), so zero is the unique equilibrium of system \eqref{eqn:sysrho}. By applying Theorem 3.1 in Chapter 2 of \cite{SMI}, we deduce that when $t \rightarrow +\infty$, the solution $\widetilde{\mathbf{u}}(t)$ converges to the equilibrium zero. Since $\mathbf{u}'(t) \leq \widetilde{\mathbf{u}}(t)$ for all $t > 0$, we deduce that the $\mathbf{u}'$ also converges to zero when $t$ large.
		\begin{figure}
			\centering
			\includegraphics[width = 0.45 \textwidth]{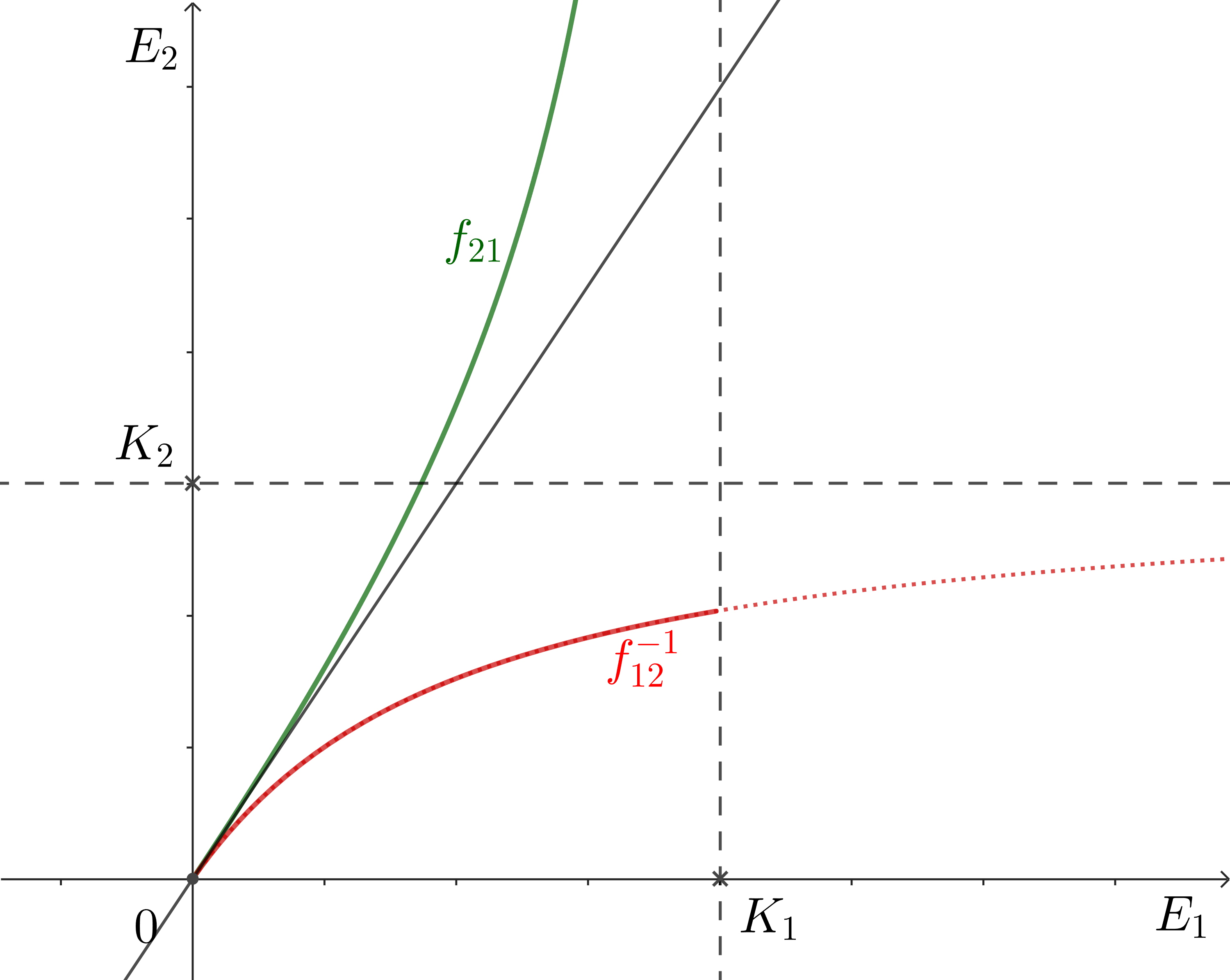}
			\caption{Relations $E_2^* = g_{21}(E_1^*)$ (in green) and $E_1^* = g_{12}(E_2^*)$ (in red) presented in the $E_1 - E_2$ plane when $\mathcal{N} > 1$. The two curves only intersect at the origin.}
			\label{fig:gij}
		\end{figure}
		
	\end{proof}
	\subsubsection{Step 2: Shaping the release function}
	We now want to choose $\Lambda$ such that the condition \eqref{eqn:rho} holds, which means 
	\[
	\gamma M_i^s(t) \geq \left(\mathcal{N} - 1\right) M_i(t), \qquad t \geq 0, \ i = 1, \ 2
	\]
	The upper bound of $M_i$ can be obtained from Lemma \ref{lem:bound}. For time $t > 0$ large enough, it is sufficient to choose $\Lambda$ such that $\gamma M_i^s(t) \geq \left(\mathcal{N} - 1\right) C_M$. 
	
	\subsection{Proof of Theorem \ref{thm:main}}
	\label{subsec:prooftwopatch}
	\begin{proof}[Proof of Theorem \ref{thm:main}] 
		\textbf{Constant release:} For $\Lambda(t) \equiv \Lambda$, let us recall $A_s = \begin{pmatrix}
			-\mu_s - \alpha d_{12} & \alpha d_{21} \\
			\alpha d_{12} & -\mu_s - \alpha d_{21}
		\end{pmatrix}$, and it is a Hurwitz matrix. And we have 
		\[ \begin{pmatrix}
			M_1^s(t) \\ M_2^s(t)
		\end{pmatrix} = e^{tA_s} \begin{pmatrix}
			M_1^{s0} \\ M_2^{s0} 
		\end{pmatrix} + (1-r)\nu_E \displaystyle \int_0^t e^{(t-s)A_s}ds \begin{pmatrix}
			\Lambda \\ 0 
		\end{pmatrix} .
		\]
		Thus, when $t \rightarrow +\infty$, we can deduce that $\begin{pmatrix}
			M_1^s(t) \\ M_2^s(t)
		\end{pmatrix}$ converges to $\begin{pmatrix}
			M_1^{s*} \\ M_2^{s*}
		\end{pmatrix}$ for any initial data since $e^{tA_s} \rightarrow 0$ when $t\rightarrow +\infty$.  Hence, for any $\varepsilon > 0$, there exists a value $T_\varepsilon > 0$ such that for any $t > T_\varepsilon$, $i = 1,2$,  
		\[
		M_i^s(t) \geq M_i^{s*} - \varepsilon. 
		\]
		If we take $\Lambda$ such that $\gamma(M_i^{s*} - \varepsilon) \geq \left(\mathcal{N} - 1\right) C_M$ with $C_M$ defined in Lemma \ref{lem:bound}, then condition \eqref{eqn:rho} holds. By applying Proposition \ref{prop:rho}, we deduce that for $i = 1, 2$, if $\Lambda > \displaystyle \max_{i = 1, 2} \dfrac{1}{\gamma \tau_i}(\mathcal{N} - 1)C_M$, system \eqref{eqn:main2patch} has a unique equilibrium point
		\[
		\mathbf{u}_0^*= (0,0,0, M_1^{s*}, 0, 0, 0, M_2^{s*}),
		\]
		and every trajectory converges to this equilibrium when $t \rightarrow +\infty$. 
		The dynamics of system \eqref{eqn:main2patch} depend continuously and monotonically on $\Lambda$, then we deduce that there exists a positive critical value $\overline{\Lambda} \leq \displaystyle \max_{i = 1, 2} \dfrac{1}{\gamma \tau_i}(\mathcal{N} - 1) C_M$ such that for any $\Lambda > \overline{\Lambda}$, and for any non-negative initial data, solution $\mathbf{u}' = (E_1, F_1, M_1, E_2, F_2, M_2)(t)$ converges to $\mathbf{0}_6$ when $t \rightarrow +\infty$.
		
		\medskip \noindent \textbf{Impulsive periodic releases:} Consider $\Lambda(t)$ defined in \eqref{eqn:periodic}, denote $(E_1^\mathrm{per}, F_1^\mathrm{per}, M_1^\mathrm{per}, E_2^\mathrm{per}, F_2^\mathrm{per}, M_2^\mathrm{per})$ a solution of \eqref{eqn:E1}-\eqref{eqn:M1}, \eqref{eqn:E2}-\eqref{eqn:M2} with $M_i^s \equiv M_i^{s, \mathrm{per}}$ defined in \eqref{eqn:Xper}.  From  Lemma \ref{lem:periodic}, one has $M_i^{s, \mathrm{per}} (t) \geq \Lambda^\mathrm{per} \tau_i^\mathrm{per}$ for all $t > 0$. Therefore, if we take $\Lambda^\mathrm{per}$ such that $\gamma \Lambda^\mathrm{per} \tau_i^\mathrm{per} \geq \left(\mathcal{N} - 1\right) C_M$, then condition \eqref{eqn:rho} holds. By applying Proposition \ref{prop:rho}, we deduce that $\mathbf{u}^\mathrm{per} = (E_1^\mathrm{per}, F_1^\mathrm{per}, M_1^\mathrm{per}, E_2^\mathrm{per}, F_2^\mathrm{per}, M_2^\mathrm{per})$ converges to zero as $t$ grows. 
		Since $(M_1^s, M_2^s)$ converges to $(M_1^{s, \mathrm{per}}, M_2^{s, \mathrm{per}})$ as $t \rightarrow +\infty$, we have $\mathbf{u}'=(E_1, F_1, M_1, E_2, F_2, M_2)$ approaches $\mathbf{u}^\mathrm{per}$ and thus converges to $\mathbf{0}_6$ as time $t$ goes to infinity. 
		
		Since the dynamics of system \eqref{eqn:main2patch} depends continuously and monotonically on $\Lambda$, there exists a positive critical value $\overline{\Lambda}^\mathrm{per} \leq \displaystyle \max_{i = 1, 2} \dfrac{1}{\gamma \tau_i^\mathrm{per}}(\mathcal{N} - 1) C_M $ such that if $\Lambda^\mathrm{per} > \overline{\Lambda}^\mathrm{per}$ the equilibrium $\mathbf{u}_0^\mathrm{per}$ of \eqref{eqn:main2patch} with $\Lambda(t)$ defined in \eqref{eqn:Lambdaper} is globally asymptotically stable.  
	\end{proof}
	\section{Parameter dependence of the critical values of the release rate \label{sec:dependence}}
	In this section, we consider the constant release case and examine how the critical value $\overline{\Lambda}$ depends on the parameters of system \eqref{eqn:main2patch}. In this model, the elimination of the population depends not only on the diffusion rate between the inaccessible area and the treated area, but also on the biological intrinsic values like the birth/death rates, and the carrying capacity. 
	
	\subsection{Diffusion rates}
	In this part, we want to compare the critical values of $\Lambda$ corresponding to different values of $d_{12}, \ d_{21}$. We show that when the diffusion rates are large enough, the critical number of sterile males released is the same as in the case when there is no separation between the two sub-populations. 
	
	\subsubsection{The case $d_{12}, \ d_{21}$ large}
	\label{subsec:largediffusion}
	First, we present a result of uniform convergence of system \eqref{eqn:main2patch} when $d_{12}, \ d_{21}$ go to $+\infty$ and $d_{12}$ is proportional to $d_{21}$. 
	\begin{proposition}
		\label{prop:infty}
		For $\varepsilon > 0$, consider the diffusion rates $d_{12}  = \dfrac{1}{\varepsilon}, \ d_{21} = \dfrac{\eta}{\varepsilon}$ with $\eta = \dfrac{d_{21}}{d_{12}} > 0$. Denote $\mathbf{u}^\varepsilon = (E_1^\varepsilon, F_1^\varepsilon, M_1^\varepsilon, M_1^{s, \varepsilon}, E_2^\varepsilon, F_2^\varepsilon, M_2^\varepsilon, M_2^{s, \varepsilon})$ the solution of system \eqref{eqn:main2patch} with the initial data $\mathbf{u}^{\varepsilon,0}$ satisfying that 
		\[
		\{E^{\varepsilon, 0}_i\}, \ \{F^{\varepsilon, 0}_i\}, \ \{M^{\varepsilon, 0}_i\} \text{  converge to } E^{0, 0}_i, F^{0, 0}_i, M^{0, 0}_i \text{ respectively as } \varepsilon \rightarrow 0, \text{ with } i = 1, \ 2, 
		\]
		and 
		\begin{equation}
			\label{eqn:initial}
			E^{\varepsilon, 0}_1 - \eta E^{\varepsilon, 0}_2 = \mathcal{O}(\varepsilon), \quad F^{\varepsilon, 0}_1 - \eta F^{\varepsilon, 0}_2 = \mathcal{O}(\varepsilon), \quad M^{\varepsilon, 0}_1 - \eta M^{\varepsilon, 0}_2 = \mathcal{O}(\varepsilon), \quad M^{s, \varepsilon, 0}_1 = M^{s, \varepsilon, 0}_2 = 0.
		\end{equation}
		Then, when $\varepsilon \rightarrow 0$, the sequence $\{\mathbf{u}^\varepsilon\}$ converges uniformly to a limit $(E_1, F_1, M_1, M^s_1, E_2, F_2, M_2, M^s_2)$ on $[0, +\infty)$. Moreover, we have
		\begin{equation}
			\label{eqn:limit}
			F_1 = \eta F_2, \quad M_1 = \eta M_2, \quad M_1^s = \eta M_2^s. 
		\end{equation}
		If we denote $F = F_1 + F_2,\ M = M_1 + M_2, \ M^s = M^s_1 + M^s_2$ , then $(E_1, E_2, F, M, M^s)$ solves the following system 
		\begin{subequations}
			\label{eqn:infinity}
			\begin{align}
				\dot{E_1} &= \frac{\eta}{\eta + 1} bF\left( 1 - \frac{E_1}{K_1}\right) - (\nu_E + \mu_E) E_1, \label{eqn:E1inf} \\
				\dot{E_2} &= \frac{1}{\eta + 1}bF\left( 1 - \frac{E_2}{K_2}\right) - (\nu_E + \mu_E) E_2, \label{eqn:E2inf} \\
				\dot{F} &= r\nu_E (E_1+E_2) \frac{M}{M + \gamma M^s}- \mu_F F, \label{eqn:Finf} \\
				\dot{M} & = (1-r)\nu_E (E_1+E_2) - \mu_M M , \label{eqn:Minf} \\
				\dot{M^s} & = \Lambda_\infty - \mu_s M^s,  \label{eqn:Msinf}
			\end{align}
		\end{subequations}
		with the corresponding initial data $E^{0,0}_1, \ E^{0, 0}_2, \ F^{0,0} = F^{0, 0}_1 + F^{0, 0}_2, \ M^{0,0} = M^{0, 0}_1 + M^{0, 0}_2, \ M^{s,0,0} = 0$.
	\end{proposition}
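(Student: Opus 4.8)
The plan is a two-time-scale (singular perturbation) argument, with a final step to make the convergence uniform in $t$ by invoking the global dynamics of the limit system \eqref{eqn:infinity}. Throughout, Lemma \ref{lem:bound} provides bounds on $E_i^\varepsilon,F_i^\varepsilon,M_i^\varepsilon,M_i^{s,\varepsilon}$ (and $E_i^\varepsilon\le K_i$) that are uniform in $t\ge 0$ \emph{and} in $\varepsilon$, since the constants $C_F,C_M,C_{M^s}$ there do not involve the diffusion rates (and the initial data are assumed convergent, hence bounded).

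\textbf{Slow--fast splitting.} I would work with the aggregated variables $F^\varepsilon=F_1^\varepsilon+F_2^\varepsilon$, $M^\varepsilon=M_1^\varepsilon+M_2^\varepsilon$, $M^{s,\varepsilon}=M_1^{s,\varepsilon}+M_2^{s,\varepsilon}$, in whose evolution the $1/\varepsilon$ migration terms cancel identically, together with the ``imbalance'' variables $G^\varepsilon=F_1^\varepsilon-\eta F_2^\varepsilon$, $H^\varepsilon=M_1^\varepsilon-\eta M_2^\varepsilon$, $H^{s,\varepsilon}=M_1^{s,\varepsilon}-\eta M_2^{s,\varepsilon}$. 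A direct computation gives scalar equations
\[
\dot G^\varepsilon=\phi^\varepsilon-\tfrac{1+\eta}{\varepsilon}G^\varepsilon,\qquad \dot H^\varepsilon=\psi^\varepsilon-\tfrac{\beta(1+\eta)}{\varepsilon}H^\varepsilon,\qquad \dot H^{s,\varepsilon}=\chi^\varepsilon-\tfrac{\alpha(1+\eta)}{\varepsilon}H^{s,\varepsilon},
\]
where $\phi^\varepsilon,\psi^\varepsilon,\chi^\varepsilon$ are built from the reaction terms and hence bounded uniformly in $t$ and $\varepsilon$. By variation of constants and the well-prepared initial data \eqref{eqn:initial}, which give precisely $G^\varepsilon(0),H^\varepsilon(0)=\mathcal O(\varepsilon)$ and $H^{s,\varepsilon}(0)=0$, one obtains $|G^\varepsilon(t)|\le |G^\varepsilon(0)|e^{-(1+\eta)t/\varepsilon}+\tfrac{\varepsilon}{1+\eta}\|\phi^\varepsilon\|_\infty=\mathcal O(\varepsilon)$ uniformly on $[0,+\infty)$, and likewise for $H^\varepsilon,H^{s,\varepsilon}$. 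In particular the relations \eqref{eqn:limit} hold in the limit.

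\textbf{Reduced system and convergence on bounded intervals.} Inverting the linear change of variables ($F_1^\varepsilon=\tfrac{\eta F^\varepsilon+G^\varepsilon}{1+\eta}$, $F_2^\varepsilon=\tfrac{F^\varepsilon-G^\varepsilon}{1+\eta}$, and similarly for $M,M^s$) and substituting into the equations for $E_1^\varepsilon,E_2^\varepsilon,F^\varepsilon,M^\varepsilon,M^{s,\varepsilon}$, one finds, using also $\tfrac{M_i^\varepsilon}{M_i^\varepsilon+\gamma M_i^{s,\varepsilon}}=\tfrac{M^\varepsilon}{M^\varepsilon+\gamma M^{s,\varepsilon}}+\mathcal O(\varepsilon)$ (the ratio is globally bounded by $1$ and $M^\varepsilon+\gamma M^{s,\varepsilon}$ is bounded below on $[t_0,+\infty)$ for any $t_0>0$, so the error is genuinely $\mathcal O(\varepsilon)$ there; the initial slice is handled directly), that $\mathbf z^\varepsilon:=(E_1^\varepsilon,E_2^\varepsilon,F^\varepsilon,M^\varepsilon,M^{s,\varepsilon})$ solves $\dot{\mathbf z}^\varepsilon=\mathbf F_0(\mathbf z^\varepsilon)+\varepsilon\,\mathbf R^\varepsilon(t)$, where $\mathbf F_0$ is the right-hand side of \eqref{eqn:infinity} (with $\Lambda_\infty=\Lambda$) and $\|\mathbf R^\varepsilon\|_\infty$ is bounded uniformly in $\varepsilon$. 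Let $\mathbf z^0$ be the solution of \eqref{eqn:infinity} with the limiting initial data; since $\mathbf F_0$ is locally Lipschitz and all trajectories stay in a fixed compact set, Gr\"onwall's inequality gives $\sup_{[0,T]}|\mathbf z^\varepsilon-\mathbf z^0|\le C_T\,\varepsilon$ for every $T$. Together with the $\mathcal O(\varepsilon)$ bound on $G^\varepsilon,H^\varepsilon,H^{s,\varepsilon}$ and the inversion formulas, this already yields uniform convergence of $\mathbf u^\varepsilon$ towards a limit satisfying \eqref{eqn:limit}--\eqref{eqn:infinity} on every bounded time interval.

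\textbf{Uniformity on $[0,+\infty)$ --- the main obstacle.} The crux is upgrading the last estimate to a bound uniform on all of $[0,+\infty)$, since $C_T$ a priori degenerates as $T\to\infty$. I would first establish the global long-time behaviour of \eqref{eqn:infinity}: being cooperative with the same monotone structure as \eqref{eqn:main2patch}, it can be analysed exactly as in Sections \ref{sec:natural}--\ref{sec:elimination}, yielding an equilibrium $\mathbf z^\infty$ (the origin when the release is supercritical, a positive one otherwise) that is attracting for the relevant initial data and locally exponentially stable. One then uses its \emph{total stability} (robustness under persistent $\mathcal O(\varepsilon)$ perturbations): given small $\delta>0$, choose $T$ with $|\mathbf z^0(t)-\mathbf z^\infty|<\delta/2$ for $t\ge T$; by the bounded-interval estimate $|\mathbf z^\varepsilon(T)-\mathbf z^\infty|<\delta$ for $\varepsilon$ small; total stability then keeps $|\mathbf z^\varepsilon(t)-\mathbf z^\infty|<\delta$ for all $t\ge T$, so $\sup_{t\ge 0}|\mathbf z^\varepsilon(t)-\mathbf z^0(t)|\to 0$, and translating back through the inversion formulas and \eqref{eqn:limit} completes the proof. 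The delicate points are thus (i) the \emph{uniform-in-time} smallness of the fast imbalance variables, for which the well-prepared data \eqref{eqn:initial} is exactly what is needed, and (ii) the total-stability step, which borrows the monotone-dynamics analysis of the limit system from the earlier sections (and, strictly, requires the limiting datum not to lie on a separatrix should the limit system be bistable for intermediate $\Lambda_\infty$).
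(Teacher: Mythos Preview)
Your proposal and the paper's proof share the same two core ingredients: (a) controlling the fast ``imbalance'' variables $F_1^\varepsilon-\eta F_2^\varepsilon$, $M_1^\varepsilon-\eta M_2^\varepsilon$, $M_1^{s,\varepsilon}-\eta M_2^{s,\varepsilon}$ via Duhamel/variation of constants, exploiting the well-prepared data \eqref{eqn:initial} to make them $\mathcal O(\varepsilon)$ uniformly in $t$; and (b) upgrading bounded-interval convergence to convergence on $[0,+\infty)$ using the long-time behaviour of the limit system. So the strategy is essentially the same.

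The execution differs on one point worth noting. On bounded intervals the paper proceeds by Arzel\`a--Ascoli: it shows $\{\mathbf u^\varepsilon\}$ and $\{\dot{\mathbf u}^\varepsilon\}$ uniformly bounded (the latter precisely via the $\mathcal O(\varepsilon)$ bound on the imbalance variables), extracts a convergent subsequence, multiplies the system by $\varepsilon$ to identify the algebraic constraints \eqref{eqn:limit}, and then uses uniqueness of the solution of \eqref{eqn:infinity} to conclude the whole sequence converges. You instead write the slow variables as a Gr\"onwall perturbation of \eqref{eqn:infinity}, which is more direct and yields an explicit rate $\sup_{[0,T]}|\mathbf z^\varepsilon-\mathbf z^0|\le C_T\varepsilon$; the price is that you must justify the $\mathcal O(\varepsilon)$ estimate on the ratio terms near $t=0$, a point you flag but do not fully resolve. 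For the extension to $t\to+\infty$, the paper uses a $\delta/3$ argument resting on the claim that a single $T$ and $\varepsilon_1$ can be chosen so that $\|\mathbf u^\varepsilon(t)-\mathbf u^\varepsilon(T)\|<\delta/3$ for all $t>T$ and $\varepsilon<\varepsilon_1$ --- i.e.\ equi-convergence of the $\varepsilon$-family to their equilibria --- which is stated rather than proved. Your formulation via total stability of the limit equilibrium makes explicit exactly what is needed there, and your caveat about separatrices in a possible bistable regime is apposite (the paper glosses over this). In short: same proof skeleton, with your Gr\"onwall route more quantitative on finite intervals and your total-stability formulation more transparent for the uniform-in-time step.
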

	\medskip 
	It is straightforward to see that the previous result implies that $F_1, F_2, M_1, M_2$ satisfy
	\[
	F_1 = \frac{\eta}{1+\eta} F, \quad
	F_2 = \frac{1}{1+\eta} F, \quad
	M_1 = \frac{\eta}{1+\eta} M, \quad
	M_2 = \frac{1}{1+\eta} M.
	\]
	
	\begin{proof}[Proof of Proposition \ref{prop:infty}]
		To prove this result, we first apply the Arzela-Ascoli theorem for the sequence of smooth solution $\{\mathbf{u}^\varepsilon\}_\varepsilon$ on a close interval $[0, T]$ with any $T > 0$. Then, we extend the convergence at infinity. 
		
		$\bullet$ \textbf{Uniform convergence on $[0, T]$: } First, we check the uniform boundedness of this sequence. For $i= 1, 2$, from Lemma \ref{lem:bound}, one has $E_i^\varepsilon(t) \leq K_i$ for all $t > 0$ and $\varepsilon > 0$. Again by this Lemma, for any $t > 0$, one has 
		\[
		F_i^\varepsilon(t) \leq \max \left\{  F_1^{\varepsilon, 0} + F_2^{\varepsilon, 0}, \ \frac{r \nu_E (K_1 + K_2)}{\mu_F} \right\} \leq C_F^0
		\]
		where $C^0_{F}$ does not depend on $\varepsilon$ since the initial data converge as $\varepsilon$ goes to zero. Similarly, we can apply Lemma \ref{lem:bound} to show that there are positive constants $C^0_{M}, \ C^0_{M^s}$ not depending on $\varepsilon$ such that for any $t > 0$, one has $M_i^\varepsilon(t) < C^0_{M}, \ M^{s, \varepsilon}_i(t) < C^0_{M^s}$. 
		
		Next, we prove that the sequence of derivative $\{\dot{\mathbf{u}^\varepsilon} \}_\varepsilon$ is also uniformly bounded. For any $t > 0$, 
		\[
		\dot{F^{\varepsilon}_1} < r\nu_E K_1 - \frac{1}{\varepsilon} \left( F^\varepsilon_1 - \eta F^\varepsilon_2 \right).
		\]
		We show that $\dfrac{F^\varepsilon_1 - \eta F^\varepsilon_2}{\varepsilon}$ is uniformly bounded on $[0, T]$. Indeed, we have for all $t > 0$ and $\varepsilon > 0$, 
		\[
		\dot{F^\varepsilon_1} - \eta \dot{F^\varepsilon_2} = r\nu_E \left(E^\varepsilon_1 \frac{M^\varepsilon_1}{M^\varepsilon_1 + \gamma M^{s, \varepsilon}_1} - \eta E^\varepsilon_2 \frac{M^\varepsilon_2}{M^\varepsilon_2 + \gamma M^{s, \varepsilon}_2}\right) - \left(\mu_F + \frac{\eta + 1 }{\varepsilon} \right)(F^\varepsilon_1 - \eta F^\varepsilon_2) 
		\]
		\[
		= A^\varepsilon - \left(\frac{\eta + 1 }{\varepsilon} \right)(F^\varepsilon_1 - \eta F^\varepsilon_2),
		\]
		where $A^\varepsilon := r\nu_E \left(E^\varepsilon_1 \dfrac{M^\varepsilon_1}{M^\varepsilon_1 + \gamma M^{s, \varepsilon}_1} - \eta E^\varepsilon_2 \dfrac{M^\varepsilon_2}{M^\varepsilon_2 + \gamma M^{s, \varepsilon}_2}\right) - \mu_F(F^\varepsilon_1 - \eta F^\varepsilon_2) $ is uniformly bounded since we already proved that $\mathbf{u}^\varepsilon$ is uniformly bounded. Then, for any $\varepsilon > 0$, we have $\left|A ^\varepsilon(t)\right | < C$ for any $t > 0$ and some constant $C > 0$. By the Duhamel formula, we obtain 
		\[
		(F^\varepsilon_1 - \eta F^\varepsilon_2)(t) =(F^{\varepsilon, 0}_1 - \eta F^{\varepsilon, 0}_2) e^{-\frac{\eta+1}{\varepsilon} t} + \displaystyle \int_0^t A^\varepsilon(s)e^{-\frac{\eta + 1}{\varepsilon} (t-s)} ds.
		\]
		So for all $t \geq 0$, one has 
		\[
		\frac{|F^\varepsilon_1 - \eta F^\varepsilon_2|(t)}{\varepsilon} \leq \frac{\left|F^{\varepsilon, 0}_1 - \eta F^{\varepsilon, 0}_2\right|}{\varepsilon} e^{-\frac{\eta+1}{\varepsilon} t} + \frac{C}{\eta +1} \left(1-e^{-\frac{\eta + 1}{\varepsilon}t}\right). 
		\]
		For any $t \in [0, +\infty)$ and $\varepsilon > 0$, one has $0 < e^{-\frac{\eta+1}{\varepsilon} t} < 1$. And due to the Assumption \eqref{eqn:initial} for the initial data, the right-hand side is uniformly bounded with respect to $\varepsilon$. Hence, we deduce that $\dot{F^{\varepsilon}_1}$ is uniformly bounded on $[0, T]$. We obtain analogously the uniform boundedness of $\dot{F^\varepsilon_i}, \ \dot{M^\varepsilon_i},$ and $\dot{M^{s, \varepsilon}_i}$. Due to the positivity of system \eqref{eqn:main2patch}, one has $\dot{E^\varepsilon_i}(t) < b C^0_{F_i}$ for all $t > 0$ and $\varepsilon > 0$. 
		
		Since the sequence of derivatives $\{\dot{\mathbf{u}^\varepsilon} \}_\varepsilon$ is uniformly bounded on $[0, T]$, we deduce the equicontinuity of the sequence $\{\mathbf{u}^\varepsilon \}_\varepsilon$. Hence, by the Arzela-Ascoli theorem, this sequence has a uniformly convergent subsequence. We denote its limit $\mathbf{u} = (E_1, F_1, M_1, M^s_1, E_2, F_2, M_2, M^s_2)$. If we multiply system \eqref{eqn:main2patch} with $\varepsilon$ and let it go to zero, we obtain the equalities \eqref{eqn:limit} and system \eqref{eqn:infinity}.
		
		With the initial data satisfying the assumptions in Proposition \ref{prop:infty}, the solution of system \eqref{eqn:infinity} on $(0, +\infty)$ is unique. Since all the subsequence of $\{\mathbf{u}^\varepsilon \}_\varepsilon$ converge to the same limit, we deduce that the whole sequence converges uniformly to this limit on $[0, T]$. 
		
		$\bullet$ \textbf{Extension to $+\infty$: } For all $t \geq 0$, we prove that for all $\delta > 0$, there exists $\varepsilon_0 > 0$ such that for all $\varepsilon \in (0, \varepsilon_0)$, we have $\lVert \mathbf{u}^\varepsilon(t) - \mathbf{u}(t) \rVert < \delta$. 
		
		Indeed, the solution of both \eqref{eqn:main2patch} and \eqref{eqn:infinity} converges to a constant as time $t$ goes to infinity, then there exists a time $T > 0$ large enough and $\varepsilon_1 > 0$ such that for any $\varepsilon \in (0, \varepsilon_1)$ and all $t > T$, one has 
		\[
		\lVert \mathbf{u}^\varepsilon(t) - \mathbf{u}^\varepsilon(T) \rVert < \frac{\delta}{3}, \quad \lVert \mathbf{u}(t) - \mathbf{u}(T) \rVert < \frac{\delta}{3}. 
		\]
		Moreover, we have that the sequence $\{\mathbf{u}^\varepsilon \}_\varepsilon$ converges uniformly to $\mathbf{u}$ in the closed interval $[0, T]$. Thus, there exists a positive value $\varepsilon_0 < \varepsilon_1$ such that for all $\varepsilon \in (0, \varepsilon_0)$,
		$
		\displaystyle \sup_{[0, T]} \lVert \mathbf{u}^\varepsilon - \mathbf{u} \rVert < \frac{\delta}{3}. 
		$
		Hence, we have $\lVert \mathbf{u}^\varepsilon(T) - \mathbf{u}(T) \rVert < \dfrac{\delta}{3}$ and we deduce that  
		\[
		\lVert \mathbf{u}^\varepsilon(t) - \mathbf{u}(t) \rVert \leq \lVert \mathbf{u}^\varepsilon(t) - \mathbf{u}^\varepsilon(T) \rVert +  \lVert \mathbf{u}^\varepsilon(T) - \mathbf{u}(T) \rVert +  \lVert \mathbf{u}(t) - \mathbf{u}(T) \rVert < \delta. 
		\]
		It is clear that for $t \leq T$, one has $\lVert \mathbf{u}^\varepsilon(t) - \mathbf{u}(t) \rVert < \dfrac{\delta}{3} < \delta$. So we obtain the convergence on $[0, +\infty)$. 
	\end{proof}
	
	In the next result, we study the limit system \eqref{eqn:infinity}. 
	\begin{theorem}
		\label{prop:infinity}
		Consider system \eqref{eqn:infinity} with the release function given by
		\begin{itemize}
			\item [(i)] constant release $\Lambda_\infty(t) \equiv \Lambda_\infty$. 
			
			Then there exists $\overline{\Lambda}_\infty > 0$ such that for any $\Lambda_\infty > \overline{\Lambda}_\infty $, system \eqref{eqn:infinity} has a unique equilibrium $ u^0_\infty = \left(0,0,0,0, \frac{\Lambda_\infty}{\mu_s} \right)$ and it is globally asymptotically stable. 
			\item [(ii)] impulsive periodic release $\Lambda_\infty(t) = \displaystyle \sum_{k=0}^{+\infty} \tau \Lambda^\mathrm{per}_\infty \delta_{k\tau}$ with period $\tau$. 
			
			Then there exists $\overline{\Lambda}^\mathrm{per}_\infty > 0$ such that for any $\Lambda^\mathrm{per}_\infty > \overline{\Lambda}^\mathrm{per}_\infty $, all trajectories of \eqref{eqn:infinity} resulting from any non-negative initial data satisfy that $(E_1, E_2, F, M)$ converges to the equilibrium $\mathbf{0}_4 \in \mathbb{R}^4$. 
		\end{itemize}
		
	\end{theorem}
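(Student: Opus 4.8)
The plan is to transport to the limit system \eqref{eqn:infinity} the two–step mechanism of Section~\ref{subsec:principle}. Note first that when $M^s$ is treated as a given nonnegative input, the $(E_1,E_2,F,M)$–block of \eqref{eqn:infinity} is cooperative on $\{0\le E_1\le K_1\}\cap\{0\le E_2\le K_2\}$ (the only nonzero off–diagonal Jacobian entries there are $\partial_F\dot E_i\ge0$, $\partial_{E_j}\dot F\ge0$, $\partial_M\dot F\ge0$, $\partial_{E_j}\dot M=(1-r)\nu_E\ge0$, all nonnegative, and this set is positively invariant and entered in finite time from any nonnegative data), so Lemma~\ref{lem:chaplygin} is available; moreover $M^s$ solves a decoupled scalar equation.

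\emph{Step 1 (the analogue of Proposition~\ref{prop:rho}).} I claim that if a trajectory of \eqref{eqn:infinity} issued from nonnegative data satisfies $\tfrac{M(t)}{M(t)+\gamma M^s(t)}\le\tfrac1{\mathcal N}$ for all $t\ge t_0$, then $(E_1,E_2,F,M)(t)\to\mathbf 0_4$. On $[t_0,+\infty)$ I would compare, with matching data at $t_0$, to the cooperative system obtained by replacing the fraction in \eqref{eqn:Finf} by the constant $1/\mathcal N$ (the analogue of \eqref{eqn:sysrho}); by Lemma~\ref{lem:chaplygin} the true $(E_1,E_2,F,M)$ stays below this comparison trajectory, while positivity bounds it below by $0$. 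In the comparison system the nonzero equilibria correspond, through $F^*=\tfrac{r\nu_E}{\mathcal N\mu_F}(E_1^*+E_2^*)$ and $E_i^*=\phi_i(F^*)$ with $\phi_i$ the concave increasing inverse of the $E_i$–equilibrium relation (so $\phi_i(0)=0$ and $\phi_1'(0)+\phi_2'(0)=\tfrac{b}{\nu_E+\mu_E}$), to the positive zeros of $h(F):=\tfrac{r\nu_E}{\mathcal N\mu_F}\bigl(\phi_1(F)+\phi_2(F)\bigr)-F$; since $h(0)=0$, $h$ is strictly concave, and — by the very definition \eqref{eqn:bon} of $\mathcal N$ — $h'(0)=\tfrac{r\nu_E}{\mathcal N\mu_F}\cdot\tfrac{b}{\nu_E+\mu_E}-1=0$, we get $h<0$ on $(0,+\infty)$, hence $\mathbf 0_4$ is the unique equilibrium. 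Then, arguing exactly as in the proof of Proposition~\ref{prop:rho} (the reduced block $(E_1,E_2,F)$ is cooperative, irreducible and dissipative with a unique equilibrium; Theorem~3.1 in Chapter~2 of \cite{SMI}), the comparison trajectory tends to $\mathbf 0_4$ — $M$ following as a stable downstream variable — and the squeeze gives $(E_1,E_2,F,M)(t)\to\mathbf 0_4$.

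\emph{Step 2 (shaping the release).} From $\dot M\le(1-r)\nu_E(K_1+K_2)-\mu_M M$ one has $\limsup_{t\to\infty}M(t)\le C_M$, with $C_M$ as in Lemma~\ref{lem:bound}. \textbf{Case (i).} For $\Lambda_\infty(t)\equiv\Lambda_\infty$, $M^s(t)\to\Lambda_\infty/\mu_s$; taking $\overline{\Lambda}_\infty:=\mu_s(\mathcal N-1)C_M/\gamma$, if $\Lambda_\infty>\overline{\Lambda}_\infty$ there is $t_0$ with $\gamma M^s(t)\ge(\mathcal N-1)M(t)$, i.e.\ $\tfrac{M}{M+\gamma M^s}\le\tfrac1{\mathcal N}$, for $t\ge t_0$, and Step~1 gives $(E_1,E_2,F,M)\to\mathbf 0_4$; since $M^s\to\Lambda_\infty/\mu_s$ always, the trajectory tends to $u^0_\infty$. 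For uniqueness, at an equilibrium $M^s=\Lambda_\infty/\mu_s$, and a nonzero $(E_1^*,E_2^*,F^*,M^*)$ would satisfy $M^*=\tfrac{(1-r)\nu_E}{\mu_M}(E_1^*+E_2^*)\le C_M$, hence $\tfrac{M^*}{M^*+\gamma M^s}\le\tfrac1{\mathcal N}$, hence $F^*\le\tfrac{r\nu_E}{\mathcal N\mu_F}(E_1^*+E_2^*)$, which with the $E$–equations and $h<0$ forces $E_i^*=0$, a contradiction; so $u^0_\infty$ is the unique equilibrium, and global attraction together with the cooperative structure makes it globally asymptotically stable. \textbf{Case (ii).} For $\Lambda_\infty(t)=\sum_{k\ge0}\tau\Lambda^\mathrm{per}_\infty\delta_{k\tau}$, $M^s$ converges (scalar version of Lemma~\ref{lem:periodic}) to the periodic profile $\tfrac{\tau\Lambda^\mathrm{per}_\infty}{1-e^{-\mu_s\tau}}e^{-\mu_s(t-k\tau)}$ on $(k\tau,(k+1)\tau)$, which is bounded below by $c_\tau\Lambda^\mathrm{per}_\infty$ with $c_\tau:=\tfrac{\tau e^{-\mu_s\tau}}{1-e^{-\mu_s\tau}}>0$; taking $\overline{\Lambda}^\mathrm{per}_\infty:=(\mathcal N-1)C_M/(\gamma c_\tau)$, if $\Lambda^\mathrm{per}_\infty>\overline{\Lambda}^\mathrm{per}_\infty$ then again $\tfrac{M}{M+\gamma M^s}\le\tfrac1{\mathcal N}$ for all large $t$, and Step~1 gives $(E_1,E_2,F,M)\to\mathbf 0_4$.

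The main obstacle is Step~1: establishing that the ``defertilised'' comparison system has $\mathbf 0$ as its only equilibrium — the identity $h'(0)=0$, forced by \eqref{eqn:bon}, together with strict concavity of $h$ — and checking that the hypotheses under which Proposition~\ref{prop:rho} invokes \cite{SMI} are genuinely met here (cooperativity and irreducibility of the $(E_1,E_2,F)$ block, dissipativity, uniqueness of its equilibrium, $M$ being a stable downstream variable). The routine point that $\tfrac{M}{M+\gamma M^s}\le\tfrac1{\mathcal N}$ is only available from some $t_0$ on is handled, as elsewhere in the paper, by starting the comparison at $t_0$ and using boundedness of all components on $[0,t_0]$.
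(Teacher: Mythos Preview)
Your proposal is correct and follows essentially the same two–step scheme as the paper: replace the fraction by $1/\mathcal N$, show the resulting comparison system has $\mathbf 0$ as its only equilibrium and invoke monotone–systems theory from \cite{SMI}, then shape $\Lambda_\infty$ so that $\gamma M^s\ge(\mathcal N-1)M$ eventually. The only notable difference is how you establish uniqueness of the zero equilibrium for the comparison system: you parametrize by $F$ and use that $h(F)=\tfrac{r\nu_E}{\mathcal N\mu_F}(\phi_1+\phi_2)(F)-F$ is strictly concave with $h(0)=h'(0)=0$, whereas the paper eliminates $F^*$ directly, observes $E_1^*+E_2^*=(E_1^*+E_2^*)\bigl[\tfrac{\eta}{\eta+1}(1-E_1^*/K_1)+\tfrac{1}{\eta+1}(1-E_2^*/K_2)\bigr]$, and notes the bracket is strictly less than $1$ unless $E_1^*=E_2^*=0$; these are two equivalent computations of the same fact. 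Your explicit thresholds $\overline{\Lambda}_\infty=\mu_s(\mathcal N-1)C_M/\gamma$ and $\overline{\Lambda}^\mathrm{per}_\infty=(\mathcal N-1)C_M/(\gamma c_\tau)$ are a bit sharper than the paper, which simply asserts existence of critical values by continuity and monotonicity in $\Lambda_\infty$.
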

	\begin{proof}
		Firstly,  we show that for $\Lambda_\infty(t)$ large enough such that $\dfrac{M}{M + \gamma M^s} \leq \dfrac{1}{\mathcal{N}}$, then all trajectories of \eqref{eqn:infinity} resulting from any non-negative initial data satisfy that $(E_1, E_2, F, M)$ converges to the equilibrium $\mathbf{0}_4 \in \mathbb{R}^4$. Indeed, consider the first four equations of system \eqref{eqn:infinity} with $\dfrac{M}{M + \gamma M^s}$ replaced by $\dfrac{1}{\mathcal{N}}$, and we denote the equilibrium $(E_1^*, E_2^*, F^*, M^*)$ of this system satisfy 
		\[
		F^* = \frac{r \nu_E (E_1^* + E_2^*)}{\mathcal{N}\mu_F}, \qquad M^* = \frac{(1-r)\nu_E (E_1^* + E_2^*)}{\mu_M},
		\]
		and 
		\[
		(E_1^* + E_2^*) = \frac{E_1^* + E_2^*}{\frac{\eta}{\eta + 1} \left(1-\frac{E_1^*}{K_1} \right) + \frac{1}{\eta + 1} \left(1-\frac{E_2^*}{K_2} \right)}. 
		\]
		This is equivalent to either $E_1^* + E_2^* = 0$ or 
		\[\dfrac{\eta}{\eta + 1} \left(1-\dfrac{E_1^*}{K_1} \right) + \dfrac{1}{\eta + 1} \left(1-\dfrac{E_2^*}{K_2} \right) = 1. \]
		The left-hand side of this equality is smaller than 1 since $E_i^* < K_i$ with $i = 1, 2$, thus we deduce that $E_1^* = E_2^* = F^* = M^* = 0$. Hence, this system has exactly one equilibrium $\mathbf{0}_4$ and all trajectories converge to this steady state by using Theorem 3.1 in Chapter 2 of \cite{SMI}. Then, by applying the comparison Lemma \ref{lem:chaplygin}, we deduce the convergence of system \eqref{eqn:infinity}. 
		
		Analogously to system \eqref{eqn:main2patch}, we have the boundedness for the solution of \eqref{eqn:infinity} and the monotonicity of the system with respect to $\Lambda_\infty$. Therefore, we can deduce the existence of the critical values for both the constant and periodic cases.       
	\end{proof}
	Next, we make a comparison between the previous case and the case where there is no separation between the two sub-populations.
	\subsubsection{The non-separation case}
	\label{subsec:nonseparation}
	When there is no separation between the two sub-populations of mosquitoes, we consider one population $(E, F, M, M^s)$ in a habitat with aquatic carrying capacity $K = K_1 + K_2$. Then $(E, F, M, M^s)$ satisfies the following system 
	\begin{subequations}
		\label{eqn:homo}
		\begin{align}
			\dot E &= bF\left( 1 - \frac{E}{K}\right) - (\nu_E + \mu_E) E, \label{eqn:E} \\
			\dot F &= r\nu_E E \frac{M}{M + \gamma M^s} - \mu_F F, \label{eqn:F} \\
			\dot M & = (1-r)\nu_E E - \mu_M M, \label{eqn:M} \\
			\dot{M}^{s} &= \Lambda - \mu_s M^s . \label{eqn:Ms} 
		\end{align}
	\end{subequations}
	For the constant release, the positive equilibrium $(E^*, F^*, M^*, M^{s*})$ satisfies 
	\[
	M^* = \frac{(1-r) \nu_E}{\mu_M} E^*, \quad M^{s*} = \frac{\Lambda}{\mu_s}, \quad F^* = \frac{r\nu_E}{\mu_F} \frac{E^*}{1 + \frac{\mu_M \gamma \Lambda}{(1-r)\nu_E \mu_s E^*}};
	\]
	and from \eqref{eqn:E}, we deduce that 
	\[
	\frac{br\nu_E}{\mu_F} \frac{E^*}{1 + \frac{\mu_M \gamma \Lambda}{(1-r)\nu_E \mu_s E^*}} \left(1 - \frac{E^*}{K} \right) - (\nu_E + \mu_E) E^* = 0. 
	\]
	This equation has no positive solution if and only if $\Lambda > \overline{\Lambda}_0 = \dfrac{(1-r)\nu_E K \mu_s (1-\mathcal{N})^2}{4\mathcal{N}\mu_M \gamma}$. 
	\begin{remark}
		We can see that in the special case where $K_1 = \eta K_2$, by taking $E = E_1 + E_2$, we can write system \eqref{eqn:infinity}  as system \eqref{eqn:homo} for $(E, F, M, M^s)$ with carrying capacity $K = K_1 + K_2$. Hence, we deduce that $\overline{\Lambda}_\infty = \overline{\Lambda}_0$. This suggests that the critical number of sterile males released in the case with very large diffusion rate is the same as in the non-separation case in \ref{subsec:nonseparation}. 
	\end{remark}
	\subsection{Biological intrinsic values}
	\label{subsec:bio}
	In this section, we compare the critical value of $\Lambda$ corresponding to different values of the parameters namely the birth rate $b$, the death rates $\mu_E, \ \mu_F, \ \mu_M, \ \mu_s$, and the carrying capacities $K_1, \ K_2$.  In this section, we show that the critical value $\overline{\Lambda}$ is monotone with respect to these parameters. To prove this claim, we first define in $\mathbb{R}^7_+$ an order such that 
	$(\mu_E, \mu_F, \mu_M, \mu_s, b, K_1, K_2) \trianglelefteq (\mu_E', \mu_F', \mu_M', \mu_s', b', K_1', K_2')$   if and only if 
	\[
	\mu_E \leq \mu_E', \ \mu_F \leq \mu_F', \ \mu_M \leq \mu_M', \ \mu_s \geq \mu_s', \ b \geq b', \ K_1 \geq K_1', \ K_2 \geq K_2'.  
	\]
	Moreover, we write $(\mu_E, \mu_F, \mu_M, \mu_s, b, K_1, K_2) \vartriangleleft (\mu_E', \mu_F', \mu_M', \mu_s', b', K_1', K_2')$ if the two vectors are not identical. With this order relation, we have the following result
	\begin{theorem}
		\label{thm:bio}
		Consider system \eqref{eqn:main2patch} and the basic offspring number $\mathcal{N} > 1$,  consider the critical values $\overline{\Lambda}$ and $\overline{\Lambda}^\mathrm{per}$ as defined in Theorem \ref{thm:main}, then we have the mappings from $\mathbb{R}^7_+$ to $\mathbb{R}_+$
		\[
		(\mu_E, \mu_F, \mu_M, \mu_s, b, K_1, K_2) \mapsto \overline{\Lambda}, \quad (\mu_E, \mu_F, \mu_M, \mu_s, b, K_1, K_2) \mapsto \overline{\Lambda}^\mathrm{per},
		\]
		are non-increasing with respect to the order $\trianglelefteq$. 
	\end{theorem}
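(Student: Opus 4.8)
The plan is to characterise $\overline{\Lambda}$ (and $\overline{\Lambda}^{\mathrm{per}}$) purely through the global extinction property of Theorem~\ref{thm:main} and then run an inter-system comparison in the order $\preceq$ of Definition~\ref{def:order} to compare the extinction thresholds for two ordered parameter sets. Write $\mathbf{f}_p$ for the right-hand side of \eqref{eqn:main2patch} with biological parameters $p=(\mu_E,\mu_F,\mu_M,\mu_s,b,K_1,K_2)$. By the proof of Theorem~\ref{thm:main}, $\overline{\Lambda}(p)$ is the infimum of the set of release rates $\Lambda>0$ for which the wild component $(E_1,F_1,M_1,E_2,F_2,M_2)$ of every non-negative solution of \eqref{eqn:main2patch} tends to $\mathbf{0}_6$; moreover this set is an interval $(\overline{\Lambda}(p),+\infty)$ (up to its endpoint) because the dynamics depend monotonically on $\Lambda$ — increasing $\Lambda$ raises the $M_1^s$-component of $\mathbf{f}_p$, hence lowers $\mathbf{f}_p$ for $\preceq$ (the sterile coordinates being reversed), hence lowers the wild population. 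Consequently it suffices to prove: for $p\trianglelefteq p'$, every $\Lambda$ that yields global extinction for the $p$-system also yields global extinction for the $p'$-system; then the extinction set for $p'$ contains that for $p$, and taking infima gives $\overline{\Lambda}(p')\le\overline{\Lambda}(p)$, i.e.\ non-increasingness. (If $\mathcal N(p')\le1$ — possible, since $\mathcal N=\tfrac{br\nu_E}{\mu_F(\mu_E+\nu_E)}$ is itself non-increasing for $\trianglelefteq$ — then $\overline{\Lambda}(p')=0$ by Theorem~\ref{thm:equi1} and there is nothing to prove, so assume $\mathcal N(p')>1$.)

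The core ingredient is a pointwise monotonicity of the vector field in the parameters: for $p\trianglelefteq p'$ and every $\mathbf{v}\in\mathbb{R}^8_+$ with $0\le E_i\le K_i'$ for $i=1,2$, one has $\mathbf{f}_{p'}(\mathbf{v})\preceq\mathbf{f}_p(\mathbf{v})$. I would verify this component by component using the signs built into $\trianglelefteq$: in the $E_i$-equation, $b'\le b$, $K_i'\le K_i$, $\mu_E'\ge\mu_E$ and $E_i\le K_i'$ (so $1-E_i/K_i'\ge0$) make the right-hand side decrease; in the $F_i$- and $M_i$-equations, $\mu_F'\ge\mu_F$ and $\mu_M'\ge\mu_M$ make them decrease; in the $M_i^s$-equations, $\mu_s'\le\mu_s$ makes $-\mu_s^{(\prime)}M_i^s$ increase, which is exactly the direction required because these two coordinates are reversed in $\preceq$; the diffusion terms and the parameters $\nu_E,r,\gamma,\alpha,\beta,d_{ij},\Lambda$ are untouched. (Equivalently, in the transformed variables of Lemma~\ref{lem:monotonicity} this reads $\widetilde{\mathbf{f}}_{p'}\le\widetilde{\mathbf{f}}_p$ componentwise, with $\widetilde{\mathbf{f}}_p$ cooperative on $\{0\le E_i\le K_i\}$.)

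With this in hand, fix $\Lambda$ such that the $p$-system has global extinction and fix any non-negative initial datum for the $p'$-system. Since $\dot E_i'<0$ whenever $E_i'\ge K_i'$, there is a finite time $t_0$ with $E_i'(t_0)<K_i'$ and $E_i'(t)\le K_i'$ for $t\ge t_0$, $i=1,2$. Let $\mathbf{u}$ be the solution of the $p$-system started from $\mathbf{u}'(t_0)$ at time $t_0$; by hypothesis its wild component converges to $\mathbf{0}_6$. On $[t_0,+\infty)$: $\mathbf{u}$ stays in $\{0\le E_i\le K_i\}$, where $\mathbf{f}_p$ is cooperative (Lemma~\ref{lem:monotonicity}); $\mathbf{u}'$ stays in $\{0\le E_i\le K_i'\}$, where the previous step gives $\dot{\mathbf{u}}'=\mathbf{f}_{p'}(\mathbf{u}')\preceq\mathbf{f}_p(\mathbf{u}')$, i.e.\ $\mathbf{u}'$ is a $\preceq$-subsolution of $\dot{\mathbf{u}}=\mathbf{f}_p(\mathbf{u})$; and $\mathbf{u}'(t_0)=\mathbf{u}(t_0)$, so $\mathbf{u}'(t_0)\preceq\mathbf{u}(t_0)$. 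Lemma~\ref{lem:monotonicity} then yields $\mathbf{u}'(t)\preceq\mathbf{u}(t)$ for all $t\ge t_0$, hence $0\le(E_i',F_i',M_i')(t)\le(E_i,F_i,M_i)(t)\to0$. Thus the $p'$-system also has global extinction for every non-negative initial datum, which finishes the case of $\overline{\Lambda}$. For $\overline{\Lambda}^{\mathrm{per}}$ the same scheme works: the impulsive jump $(\tau\Lambda^{\mathrm{per}},0)$ in $M_1^s$ does not depend on $p$, so it preserves the $\preceq$-ordering at release times and the comparison above applies on each inter-jump interval; and one only needs, in addition, that the limiting periodic profile $M_i^{s,\mathrm{per}}$ of Lemma~\ref{lem:periodic} is entrywise non-increasing in $\mu_s$, which follows by writing $e^{tA_s}=e^{-\mu_s t}e^{tA_s^0}$ with $A_s^0=A_s+\mu_s I$ and $(I-e^{A_s\tau})^{-1}=\sum_{k\ge0}e^{-k\mu_s\tau}e^{kA_s^0\tau}$, the matrices $e^{tA_s^0}$ being strictly positive.

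The step I expect to be the main obstacle is making the inter-system comparison airtight: $\mathbf{f}_p$ is only cooperative, and the parameter inequality $\mathbf{f}_{p'}(\mathbf v)\preceq\mathbf{f}_p(\mathbf v)$ holds, on the box $\{0\le E_i\le K_i'\}$, which is strictly smaller than the box $\{0\le E_i\le K_i\}$ on which the comparison Lemma~\ref{lem:monotonicity} lives when $K_i'<K_i$. The device of waiting until the $p'$-trajectory has entered the smaller box and only then comparing it against a freshly restarted $p$-trajectory reconciles the two regions; care is also needed throughout to keep the sign reversal of the sterile-male coordinates in $\preceq$ consistent, so that ``parameters more favourable to the sterile males'' (smaller $\mu_s$, and likewise larger $\Lambda$) translates correctly into ``smaller vector field'' for $\preceq$.
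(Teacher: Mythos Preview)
Your proposal is correct and follows the same comparison strategy as the paper: verify the pointwise parameter inequality $\mathbf{f}_{p'}(\mathbf v)\preceq\mathbf{f}_p(\mathbf v)$, invoke the $\preceq$-monotonicity of Lemma~\ref{lem:monotonicity} to squeeze the $p'$-trajectory below a $p$-trajectory started from the same state, and transfer global extinction from $p$ to $p'$; you are simply more explicit than the paper about the periodic case and about arbitrary initial data. One remark on the obstacle you flag at the end: it is not really there, because the inequality $\mathbf{f}_{p'}(\mathbf v)\preceq\mathbf{f}_p(\mathbf v)$ in fact holds on the \emph{larger} box $\{0\le E_i\le K_i\}$ as well --- when $K_i'<E_i\le K_i$ one has $b'F_i(1-E_i/K_i')\le 0\le bF_i(1-E_i/K_i)$, so the $E_i$-component still drops --- hence your restart device, while valid, is not strictly needed for the parameter comparison (it is still useful to bring a general initial datum into the invariant box before invoking Lemma~\ref{lem:monotonicity}).
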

	\begin{proof}
		First, we consider system \eqref{eqn:main2patch} with two sets of parameters 
		\[ \Theta = (\mu_E, \mu_F, \mu_M, \mu_s, b, K_1, K_2), \quad \Theta' = (\mu_E', \mu_F', \mu_M', \mu_s', b', K_1', K_2'),
		\]
		where $\Theta \trianglelefteq \Theta'$. We fix the same value of $\Lambda$ in both cases and consider 
		\[
		\mathbf{u} = (E_1, F_1, M_1, M_1^s, E_2, F_2, M_2, M_2^s), \quad \mathbf{v} = (\widetilde{E_1}, \widetilde{F_1}, \widetilde{M_1}, \widetilde{M_1^s}, \widetilde{E_2}, \widetilde{F_2}, \widetilde{M_2}, \widetilde{M_2^s})
		\]        
		where $\mathbf{u}, \ \mathbf{v}$ are the solutions of \eqref{eqn:main2patch} using the same initial data with the parameters $\Theta, \ \Theta'$, respectively. We have $\dot{\mathbf{u}} = \mathbf{f}_\Theta(\mathbf{u})$, and $\dot{\mathbf{v}} = \mathbf{f}_{\Theta'}(\mathbf{v}) \preceq \mathbf{f}_\Theta(\mathbf{v})$ in the subset $\{0 \leq E_1 \leq K_1\} \cap \{0 \leq E_2 \leq K_2\}$ of $\mathbb{R}^8_+$. Moreover, functions $\mathbf{f}_\Theta$ and $\mathbf{f}_{\Theta'}$ satisfy the assumptions in Lemma \ref{lem:monotonicity}, then by applying this lemma, we obtain that $\mathbf{v} \preceq \mathbf{u}$ for the same initial data, so
		\[
		E_i(t) \geq \widetilde{E_i}(t), \ F_i(t) \geq \widetilde{F_i}(t), \ M_i(t) \geq \widetilde{M_i}(t) \qquad \text{for all } t > 0, \ i = 1, 2.  
		\]
		On the other hand, for any $\Lambda > \overline{\Lambda}_\Theta$, by Theorem \ref{thm:main} we have that $E_i(t), \ F_i(t), \ M_i(t)$ converge to zero as $t$ goes to infinity. As a consequence of the above inequalities, we deduce that $\widetilde{E_i}(t), \ \widetilde{F_i}(t), \ \widetilde{M_i}(t)$ also converge to zero for all initial data. So $\Lambda > \overline{\Lambda}_{\Theta'}$, and we can deduce that $\overline{\Lambda}_\Theta \geq \overline{\Lambda}_{\Theta'}$. 
	\end{proof}
	\section{Numerical simulations \label{sec:numeric}}
	Following \cite{DUF, STR19}, we consider the parameters as in Table \ref{tab:param}. 
	\begin{table}
		\centering
		\caption{Parameter values of \textit{Aedes albopictus} mosquitoes used for the numerical simulation}
		\label{tab:param}
		\begin{tabular}{|c|c|c|c|}
			\hline
			\textbf{Symbol} & \textbf{Description} & \textbf{Value} & \textbf{Unit} \\
			\hline
			$b$ & Birth rate of fertile females & 10 & $\text{day}^{-1}$ \\
			\hline
			$\nu_E$ & Emerging rate of viable eggs & 0.08 &  $\text{day}^{-1}$ \\
			\hline
			$\mu_E$ & Death rate of aquatic phase & 0.05 & $\text{day}^{-1}$ \\
			\hline 
			$\mu_F$ & Female death rate & 0.1 & $\text{day}^{-1}$\\
			\hline 
			$\mu_M$ & Wild male death rate & 0.14 & $\text{day}^{-1}$ \\
			\hline 
			$\mu_s$ & Sterile male death rate & 0.14 & $\text{day}^{-1}$ \\
			\hline
			$K_1$ & Carrying capacity of aquatic phase in patch 1 & 200 & \_ \\ 
			\hline
			$K_2$ & Carrying capacity of aquatic phase in patch 2 & 180 & \_ \\ 
			\hline 
			$\gamma$ & Mating competitiveness of sterile male & 1 & \_ \\
			\hline
			$r$ & Ratio of female hatch & 0.5 & \_ \\
			\hline
			$\alpha$ & Ratio between diffusion rates of sterile males and female & 0.5 & \_ \\
			\hline
			$\beta$ & Ratio between diffusion rates of sterile males and female & 0.8 & \_ \\
			\hline
		\end{tabular}
	\end{table} 
	\subsection{Trajectories and Equilibria}
	We fix the moving rate $d_{12} = 0.06, \ d_{21} = 0.04$ ($\text{day}^{-1}$), and plot the numerical solutions of system \eqref{eqn:main2patch} with different releases functions $\Lambda(t)$. 
    In the case $\Lambda = 0$, system \eqref{eqn:natural} has a unique equilibrium 
    \[
    (E_1^+, F_1^+, M_1^+, E_2^+, F_2^+, M_2^+) = (192.63, 67.92, 50.29, 174.83, 79.06, 54.69).
    \]
    To highlight the global stability of equilibria, we numerically solve the system with initial data in three levels: close to zero, intermediate, and close to the positive equilibrium when $\Lambda = 0$. For the level close to zero, the initial data is taken between $1\%$ to $10\%$ of the value of the above equilibrium, between $40\%$ and $50\%$ for the intermediate level, and between $90\%$ to $100\%$ for the third case. More precisely, we take \[(E_1^0, F_1^0, M_1^0, M_1^{s,0}, E_2^0, F_2^0, M_2^0, M_2^{s,0}) \in \{(2,5,4,0,3,5,3,0),\ (80,30,20,0,70,30,30,0), \ (160,60,50,0,155,70,50,0)\}.\]
    When the largest wild mosquito population was less than $10^{-2}$, we considered the wild population to be eliminated.
	The following section presents several numerical simulations showing the trajectories and approximated equilibria according to different release strategies.
	
	\subsubsection{Constant continuous releases}
	We take three different constant values of $\Lambda \in \{0, 200, 500\}$ ($\text{day}^{-1}$). The initial density of sterile males is equal to zero. We approximate the positive equilibria in each case and plot the trajectories of $E_1$ and $E_2$ in Figures \ref{fig:trajectory} according to different values of $\Lambda$. We observe the following: 
	\begin{itemize}[leftmargin = 0.5 cm]
		\item When $\Lambda = 0$, there is one positive equilibrium 
		\[
		(E_1^*, E_2^*) = (192.62, 174.82).
		\]
		All positive trajectories converge to the positive steady state $(E_1^*, E_2^*)$. 
		\item  When $\Lambda = 200$ ($\text{day}^{-1}$), there are two positive equilibria
		\[
		(e_1^*, e_2^*) =  (17.29, 49.98), \qquad (E_1^*, E_2^*) = (85.79, 130.02).
		\]
		All positive trajectories also converge to the larger positive steady state $(E_1^*, E_2^*)$. 
		\item  When $\Lambda = 500$ ($\text{day}^{-1}$), there is no positive equilibrium. All the trajectories converge to the zero equilibrium.
	\end{itemize}
	This validates the result in Theorem \ref{thm:main} that when $\Lambda$ exceeds some critical value, zero is the unique equilibrium of system \eqref{eqn:main2patch}.
	The observation for $\Lambda = 0$ illustrates the result in Theorem \ref{thm:equi1} that there is one positive equilibrium and it is globally asymptotically stable. The introduction of sterile males ($\Lambda = 200 > 0$) reduces the value of the positive steady state (see Figure \ref{fig:trajectory1}), and when $\Lambda = 500$ ($\text{day}^{-1}$) exceeds some critical value (at most equal to 500), all trajectories converge to the zero equilibrium (see Figure \ref{fig:trajectory2}). This illustrates the first point of Theorem \ref{thm:main}. To approximate the critical value of $\Lambda$, we provide some numerical bifurcation diagrams in Section \ref{subsec:bif}. 
	
	\begin{figure}
		\centering
		\begin{subfigure}{\textwidth}
			\centering
			\includegraphics[width = \textwidth]{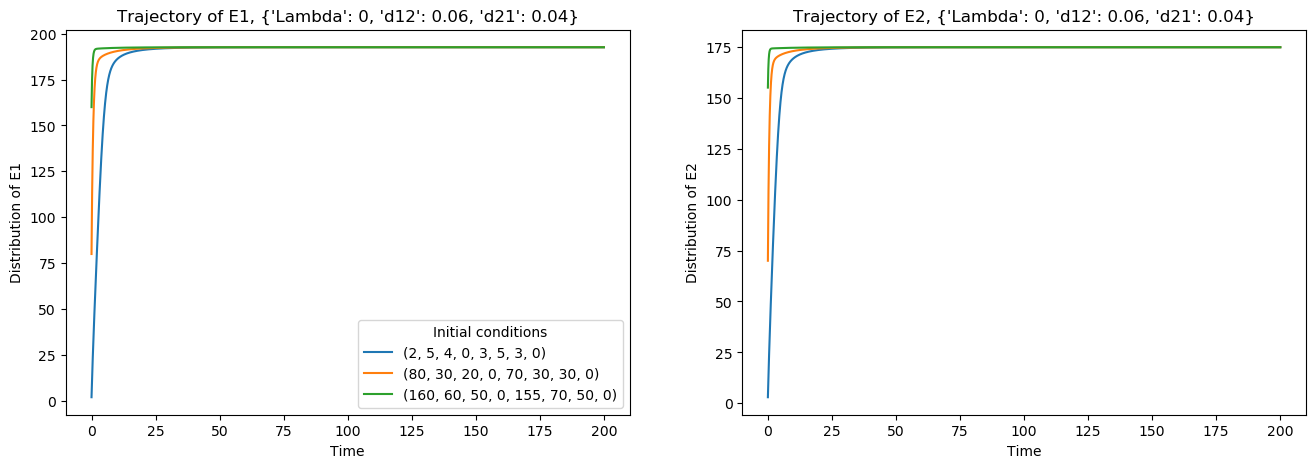}
			\caption{$\Lambda = 0$}
			\label{fig:trajectory0}
		\end{subfigure}\\ 
		\begin{subfigure}{\textwidth}
			\centering
			\includegraphics[width = \textwidth]{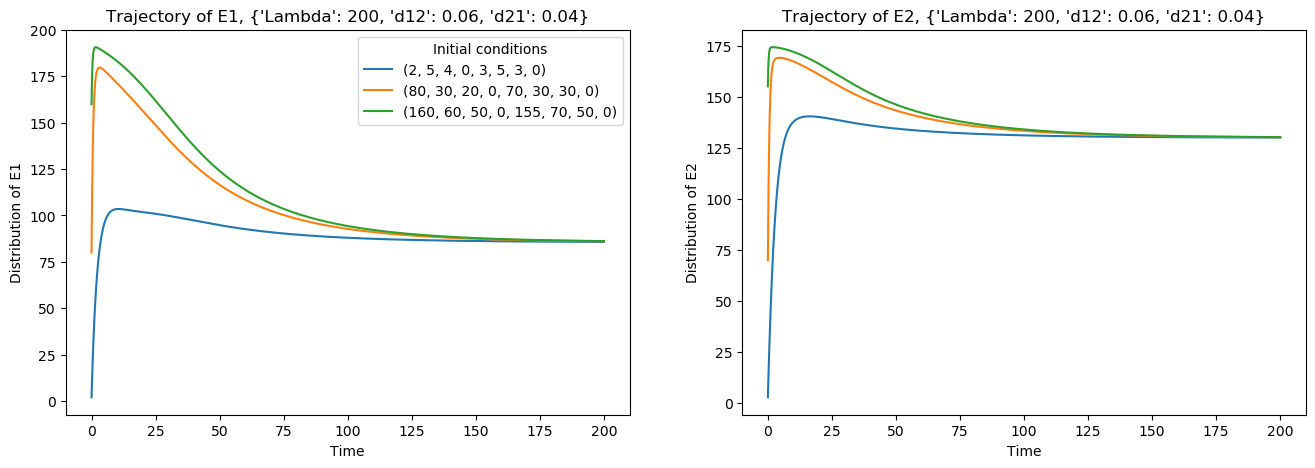} 
			\caption{$\Lambda = 200$ ($\text{day}^{-1}$).}
			\label{fig:trajectory1}
		\end{subfigure} \\      
		\begin{subfigure}{\textwidth}
			\centering
			\includegraphics[width = \textwidth]{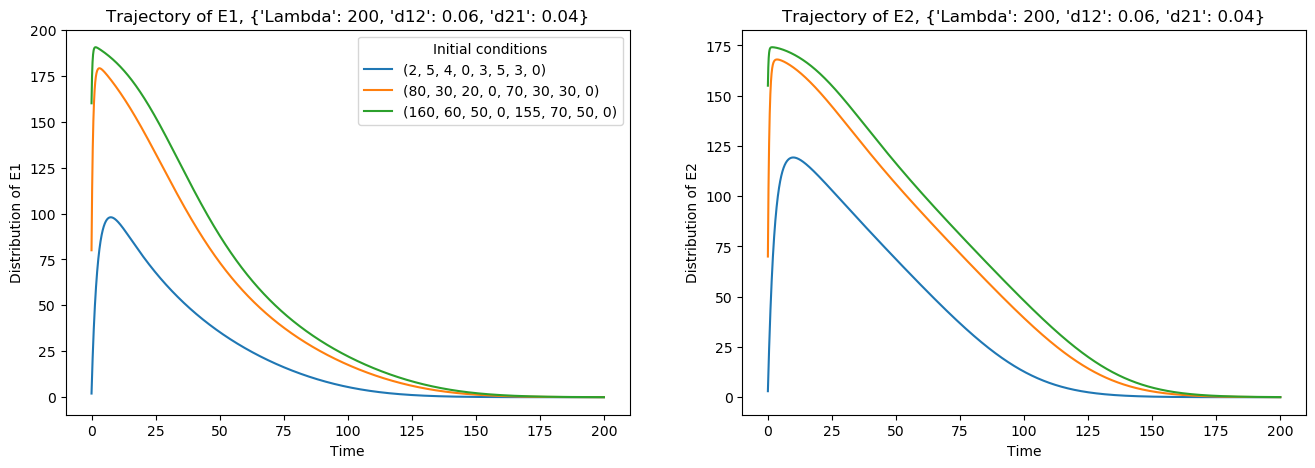} 
			\caption{$\Lambda = 500$ ($\text{day}^{-1}$).}
			\label{fig:trajectory2}
		\end{subfigure}
		\caption{Trajectories of $E_1$ and $E_2$ in the constant release case with diffusion rates $d_{12} = 0.06, \ d_{21} = 0.04$ ($\text{day}^{-1}$).}
		\label{fig:trajectory}
	\end{figure}
	\subsubsection{Periodic impulsive releases}
	In this part, we consider the periodic impulsive releases with $\Lambda(t)$ defined in \eqref{eqn:Lambdaper}, with $\Lambda^\mathrm{per}$ equal to $200$ and $300$ ($\text{day}^{-1}$), the period $\tau = 10$ (days). The trajectories of $E_1, \ E_2$ shown in Figure \ref{fig:trajectory_per} converge to the periodic solution when $\Lambda^\mathrm{per} = 200$ ($\text{day}^{-1}$) and go to zero when $\Lambda^\mathrm{per} = 300$ ($\text{day}^{-1}$). This illustrates the second point of Theorem \ref{thm:main} that when the number of sterile males released exceeds a critical value $\overline{\Lambda}^\mathrm{per}$, the wild populations of mosquitoes in both areas reach elimination.     
	\begin{figure}
		\centering
		\begin{subfigure}{\textwidth}
			\centering
			\includegraphics[width = \textwidth]{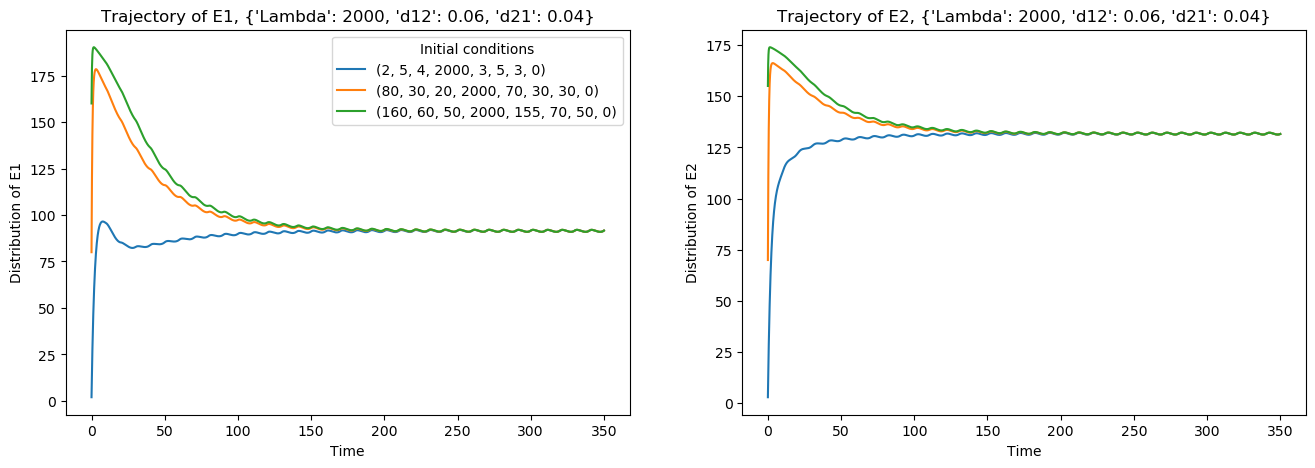}
			\caption{$\Lambda^\mathrm{per} = 200$ ($\text{day}^{-1}$).}
			\label{fig:trajectory_per1}
		\end{subfigure}\\ 
		\begin{subfigure}{\textwidth}
			\centering
			\includegraphics[width = \textwidth]{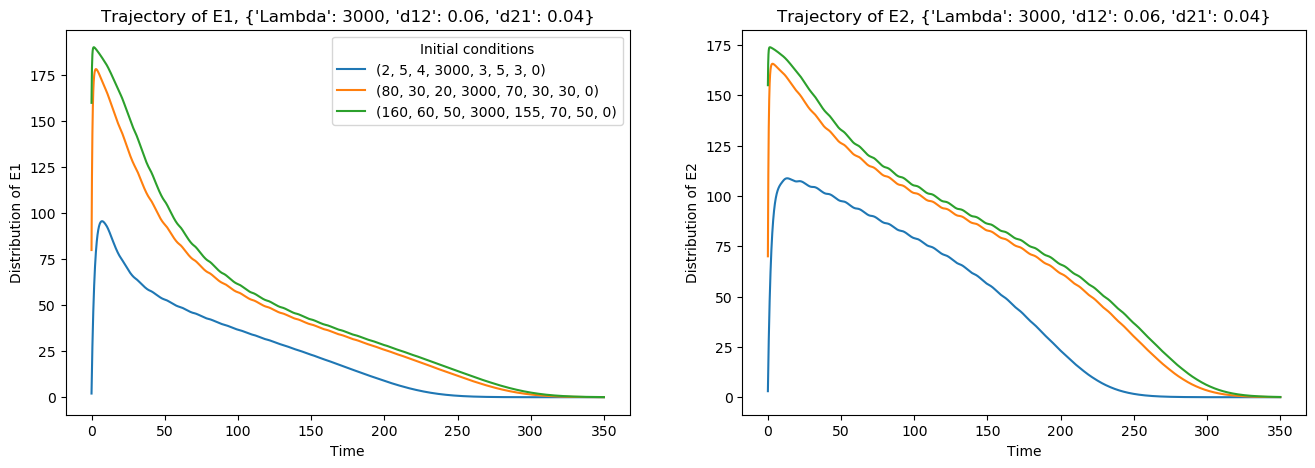}
			\caption{$\Lambda^\mathrm{per} = 300$ ($\text{day}^{-1}$).}
			\label{fig:trajectory_per2}
		\end{subfigure}
		\caption{Trajectories of $E_1$ and $E_2$ in the periodic release case with period $\tau = 10$ (days), diffusion rates $d_{12} = 0.06, \ d_{21} = 0.04$ ($\text{day}^{-1}$).}
		\label{fig:trajectory_per}
	\end{figure}
	\subsection{Critical values and bifurcation}
	\label{subsec:bif}
	Our aim in this section is to approximate the critical value of $\Lambda$ where the bifurcation occurs.  
	\subsubsection{Bifurcation diagram in the constant release case}
        \begin{figure}
		\centering
		\begin{subfigure}{0.45 \textwidth}
			\centering
			\includegraphics[width = \textwidth]{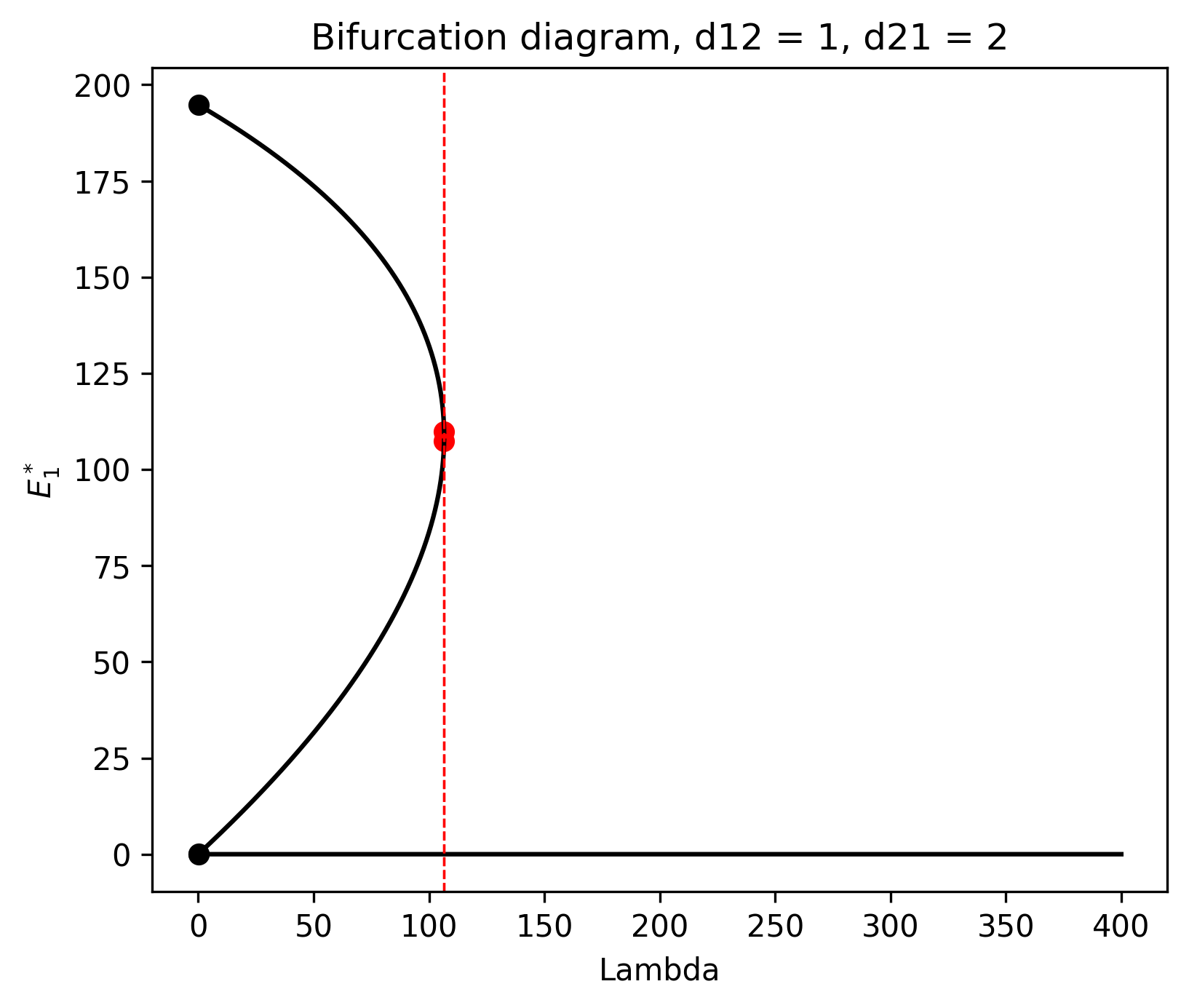}
		\end{subfigure}
		\begin{subfigure}{0.45 \textwidth}
			\centering
			\includegraphics[width = \textwidth]{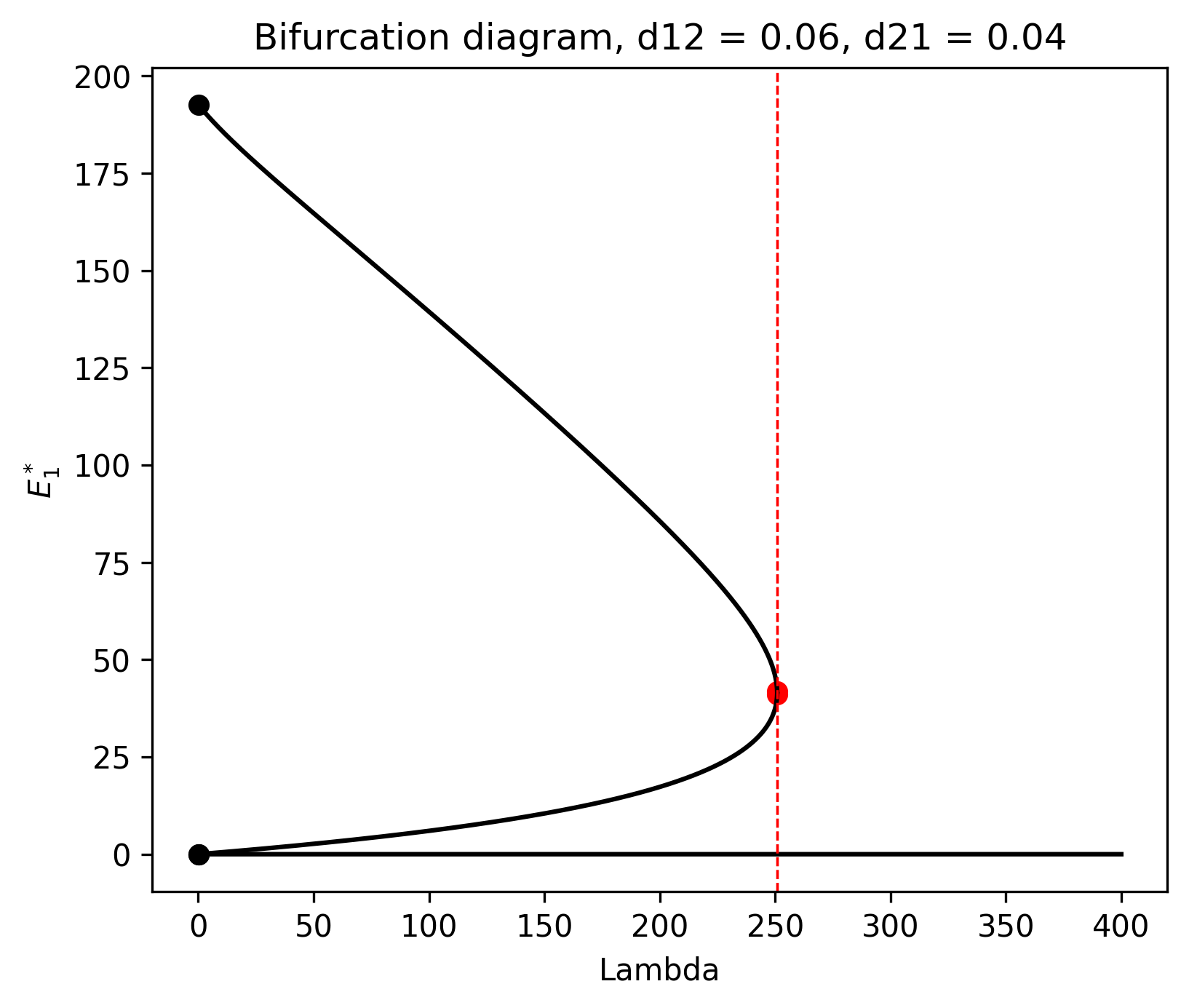}
		\end{subfigure}
		\caption{Bifucation diagrams of $E_1^*$ with parameter $\Lambda$ in the constant continuous release case. }
		\label{fig:bifurcation}
	\end{figure}
	We solve a system of nonlinear stationary problems $\mathcal{F}(u; \Lambda)=0$ for all values of the parameter $\Lambda$, knowing that the solutions are continuous with respect to $\Lambda$. Solving by numerical approximations can be done using numerical continuation methods (see \cite{RHE}). 
	
	Here we present the simplest method called \textit{Natural Parameter Continuation} (incremental methods, see \cite{RHE}):  Iteratively find approximate roots of $\mathcal{F}(u;\Lambda) = 0$ for several values of $\Lambda_i$ with index $i \in \mathbb{N}^*$. The root of step $i$ is used as an initial guess for the numerical solver at step $i+1$. The first initial guess is the root for the smallest $\Lambda$. To approximate the critical value $\overline{\Lambda}$ in the constant case and examine what happens when $0 < \Lambda  \leq \overline{\Lambda}$, we draw the bifurcation diagram for $\Lambda \in [0.1, 500]$. The initial positions of the numerical continuation are taken at the approximated equilibria when $\Lambda = 0.1$. 
	
	We obtain the bifurcation diagrams in Figure \ref{fig:bifurcation} for two scenarios. We observed that the critical value of $\Lambda$ decreases when the diffusion rates increase. 
	\begin{itemize}[leftmargin = 0.5 cm]
		\item For $d_{12} = 1, \ d_{21} = 2$, the critical value $\overline{\Lambda} = 106.45$ ($\text{day}^{-1}$).
		\item For $d_{12} = 0.06, \ d_{21} = 0.04$, the critical value $\overline{\Lambda} = 250.88$ ($\text{day}^{-1}$). 
	\end{itemize}
    \begin{figure}
    	\centering
    	\begin{subfigure}{0.48 \textwidth}
    		\centering
    		\includegraphics[width = \textwidth]{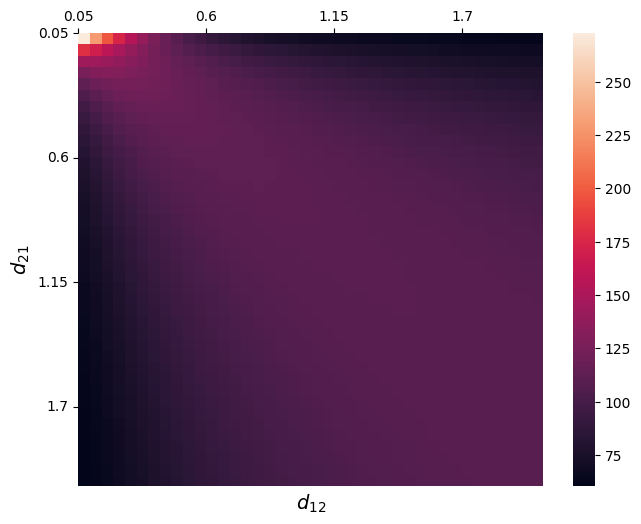}
    		\subcaption{Heatmap depicts values for $\overline{\Lambda}$}
    		\label{fig:heatmap}
    	\end{subfigure}
    	\begin{subfigure}{0.48 \textwidth}
    		\centering
    		\includegraphics[width = \textwidth]{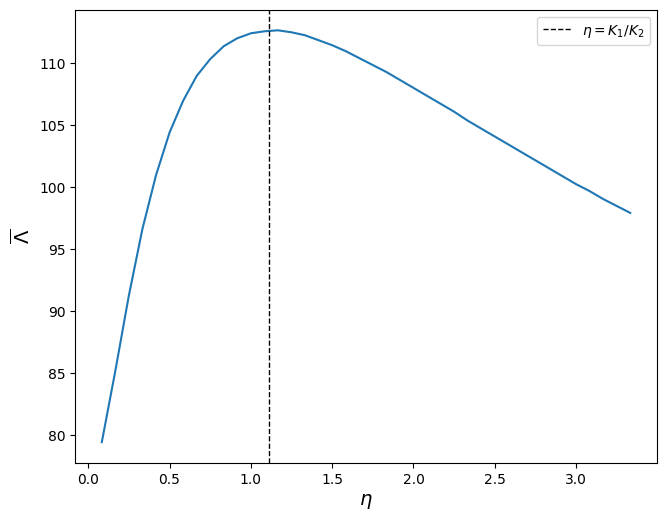}
    		\subcaption{Values of $\overline{\Lambda}$ change with respect to $\eta = d_{21}/d_{12}$}
    		\label{fig:ratio}
    	\end{subfigure}
    	\caption{Values of $\overline{\Lambda}$ varies with respect to diffusion rates $d_{12}, \ d_{21} \in [0.05,2]$. }
    	\label{fig:lambda_d12}
    \end{figure}
	Values of $\overline{\Lambda}$ varying with respect to different values of $d_{12}, \ d_{21} \in [0.05,2]$ were depicted in Figure \ref{fig:heatmap}. We observe that as the diffusion rates get larger, the critical value $\overline{\Lambda}$ is decreasing and converges to some value $\overline{\Lambda}_\infty$. This validates the result provided by Proposition \ref{prop:infinity} where $\overline{\Lambda}_\infty$ is the critical value of $\Lambda$ corresponding to system \eqref{eqn:infinity}. 
		
	By considering the case $d_{21} = \eta d_{12}$, we plot values of $\overline{\Lambda}$ corresponding to different ratios $\eta$ while fixing $d_{12} = 0.6$  in Figure \ref{fig:ratio}. We can see that $\overline{\Lambda}$ reach a local maximum as $\eta$ passes through the ratio $\frac{K_1}{K_2}$. In practice, since the mosquitoes will likely move to areas with more breeding sites, the ratio $\eta = \frac{d_{21}}{d_{12}}$ is close to the carrying capacity $\frac{K_1}{K_2}$. This indicates that intervening on the breeding sites to increase the difference between these two ratios can help decrease the number of sterile males needing release.
	
	Moreover, as $d_{12}$ get larger, this value of $\overline{\Lambda}$ gets closer to the critical value $\overline{\Lambda}_0$ of the system when there is no separation between the two sub-populations defined in \ref{subsec:nonseparation}.
	\subsubsection{Comparison of release strategies}
        \begin{figure}
		\centering
		\includegraphics[width = \textwidth]{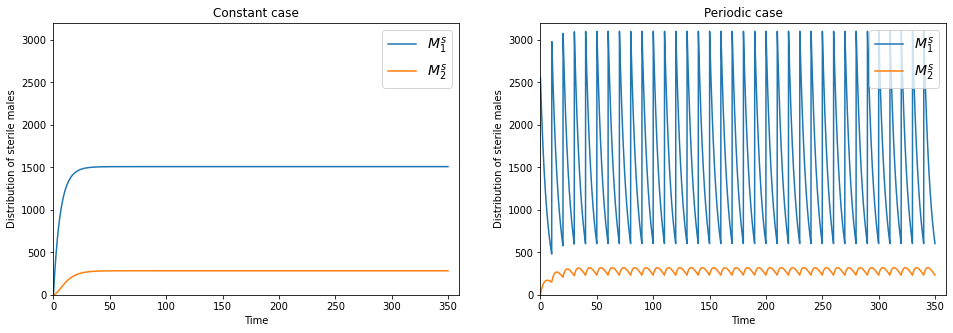}
		\caption{Densities of $M_1^s$ and $M_2^s$ in both cases. \textit{Left:} constant continuous releases with $\Lambda = \overline{\Lambda} = 250.88$ ($\text{day}^{-1}$), \textit{Right:} periodic impulsive releases with $\Lambda^\mathrm{per} = \overline{\Lambda}^\mathrm{per} = 255.15$ ($\text{day}^{-1}$).}
		\label{fig:comparisonMs}
	\end{figure}
 
	In practice, the strategy using impulsive releases is more realistic than the constant strategy. In this section, we make a comparison between these two strategies. 
	
	For the fixed diffusion rates $d_{12} = 0.06, \ d_{21} = 0.04$, we approximated the critical number of sterile males released in both cases using the method in \ref{subsec:bif} 
	\begin{itemize}
		\item When $\Lambda(t) \equiv \Lambda$ constant, the critical value $\overline{\Lambda} \approx 250.88$ ($\text{day}^{-1}$); 
		\item When $\Lambda(t) = \displaystyle \sum_{k=0}^{+\infty} \tau \Lambda^\mathrm{per} \delta_{k\tau}$ with period $\tau = 10$, the critical value of $\Lambda^\mathrm{per}$ is $\overline{\Lambda}^\mathrm{per} \approx 255.15$ ($\text{day}^{-1}$). 
	\end{itemize}
	We can see that $ \overline{\Lambda}$ and $\overline{\Lambda}^\mathrm{per}$ are consistent. We also present numerical simulations in both cases with the same total amount of sterile males released where $\Lambda^\mathrm{per} = \Lambda = 300$. The densities of sterile males in both cases are shown in Figure \ref{fig:comparisonMs}. In the constant release case, one observes that the density of sterile males in both zones converges to an equilibrium $(M_1^{s,*}, M_2^{s,*})$. In the periodic case, Figure \ref{fig:comparisonMs} illustrates the results provided in Lemma \ref{lem:periodic} in which the density of sterile males converges to a periodic solution that is bounded from below.
	
	We obtained in Figure \ref{fig:comparisonEFM} that in both cases, the wild mosquito population reaches elimination at time $t \approx 300$. Again we can see that the two strategies provide the same performance.

    \begin{figure}
	\centering
	\includegraphics[width = \textwidth]{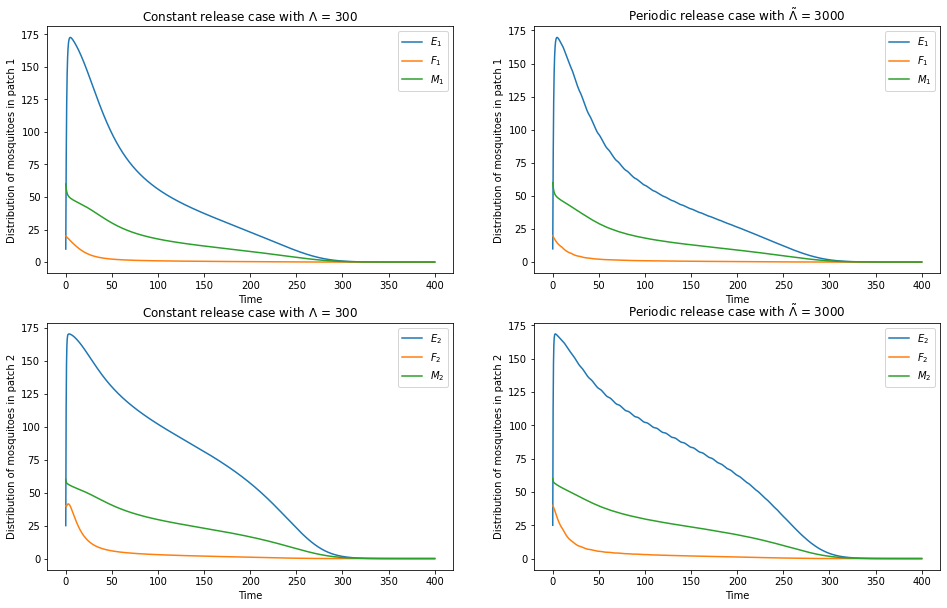}
        \caption{Densities of wild mosquitoes in two patches in both cases. \textit{Left:} constant continuous release with $\Lambda = 300$ mosquitoes released per day, \textit{Right:} periodic impulsive releases with $\Lambda^\mathrm{per} = 300$ mosquitoes released at the beginning of each time period.}
	\label{fig:comparisonEFM}
    \end{figure}
	\section{Discussion and conclusion}
	The existence of some hidden areas (e.g. crab burrows) that can not be accessed by the SIT hinders the population from reaching elimination. Without the implementation of this technique, Theorem \ref{thm:equi1} showed that the wild populations in both areas are persistent and converge towards the unique positive equilibrium (see Figure \ref{fig:trajectory0}) and are independent of the diffusion rates between them. The main results obtained in the present work indicated that with a sufficient number of sterile males released, the SIT succeeds in driving both sub-populations to extinction. We investigated both continuous constant releases and impulsive periodic releases in Theorem \ref{thm:main}. The two strategies provided almost similar performance but the periodic release is more realistic in practice. The idea in our proof can also be used to design a feedback release strategy and this could be studied in future works. 
	
	The results in Theorem \ref{thm:bio} also pointed out that the critical numbers of released sterile males are monotone with respect to the biological parameters of the population (see Section \ref{subsec:bio}). A population with a larger birth rate and a bigger environment carrying capacity requires more sterile males to reach elimination. A larger death rate in any compartment of the wild mosquitoes reduces this critical value, and on the contrary, a larger death rate for the sterile males increases this value. From the control measure point of view, one can lower the threshold value of $\Lambda$ by killing mosquitoes to increase mosquito mortality or removing breeding sites, even only in the accessible zone to reduce the carrying capacity ($K_1$). This indicates that other conventional control measures can be combined with the SIT control to make the SIT elimination threshold easier to attain.
	
	Moreover, the critical number of sterile males also depends on the diffusion rates between the treated area and the inaccessible zone. More precisely, if the diffusion rates are large, this system approaches the case when there is no separation between two sub-populations (see Theorem \ref{prop:infinity}). Numerically, we showed that the larger the values of diffusion rates, the smaller the threshold we need to exceed to obtain elimination (see Figure \ref{fig:lambda_d12}).   This also showed that when the movement is at a low level, the leak of wild mosquitoes from the inaccessible area impedes the eradication in the treated zone and it requires a larger number of sterile males to break through this obstacle. In practice, this could be an unrealistic amount of sterile mosquitoes. It is not surprising that the scenario with larger diffusion between two areas is better since more sterile males can arrive at the unreachable zone. Further study on dispersal rates of mosquitoes is necessary to estimate the release rates for SIT elimination in the presence of inaccessible zones.
	\section*{Acknowledgments}
	This study is supported by the \textbf{STIC AmSud project BIO-CIVIP 23-STIC-02}.  
	\vskip 0.2cm
	
	\begin{minipage}{0.15\textwidth}
		\includegraphics[height = 1.2 cm]{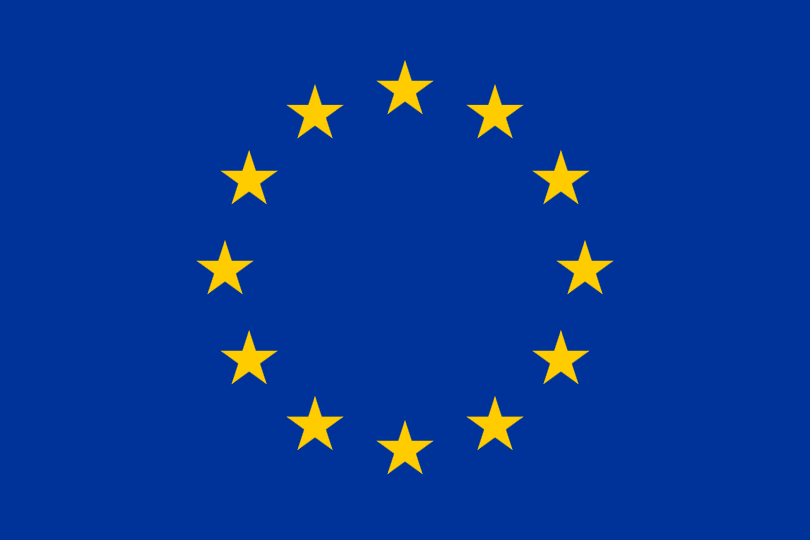} 
	\end{minipage}
	\begin{minipage}{0.8\textwidth}
		{N. Nguyen has received funding from the European Union's Horizon 2020 research and innovation program under the Marie Sklodowska-Curie grant agreement No 945332.}
	\end{minipage}
	\bibliographystyle{ieeetr}
	\bibliography{bib_p}
\end{document}